\numberwithin{equation}{section}
\newcommand\G{\vec G}
\renewcommand\H{\vec H}
\newcommand\E{\vec E}
\newcommand\F{\vec F}
\newcommand\V{\vec V}
\newcommand\nid{\!}
\newcommand\ord{\mathrm{o}}
\newcommand\dis{\mathrm{d}}
\newcommand\Sord{S_{\mathrm{o}}}
\newcommand\Sdis{S_{\mathrm{d}}}
\newcommand\tSord{\tilde{S}_{\mathrm{o}}}
\newcommand\Omegaord{\Omega_{\G,\mathrm{o}}}
\newcommand\Omegadis{\Omega_{\G,\mathrm{d}}}
\newcommand\piord{\pi_{\G,\mathrm{o}}}
\newcommand\pidis{\pi_{\G,\mathrm{d}}}
\newcommand\nuord{\nu_{\mathrm{o}}}
\newcommand\rhoord{\rho_{\mathrm{o}}}
\newcommand\nudis{\nu_{\mathrm{d}}}
\newcommand\rhodis{\rho_{\mathrm{d}}}
\newcommand\Ex{\mathbb{E}}
\newcommand\mord{\fm_{\mathrm{o}}}
\newcommand\mdis{\fm_{\mathrm{d}}}
\newcommand\Zord{Z_{\mathrm{o}}}
\newcommand\Zdis{Z_{\mathrm{d}}}
\newcommand\sord{\hat\SIGMA_{\mathrm{o}}}
\newcommand\sdis{\hat\SIGMA_{\mathrm{d}}}
\newcommand\emm{\mathrm{e}}
\renewcommand{\vec}[1]{\boldsymbol{#1}}
\newcommand\SIGMA{\vec\sigma}
\newcommand\fm{\mathfrak{m}}
\newcommand\cA{\mathcal{A}}
\newcommand\cC{\mathcal{C}}
\newcommand\cO{\mathcal{O}}
\newcommand\cP{\mathcal{P}}
\newcommand\cG{\mathcal{G}}
\newcommand\cE{\mathcal{E}}
\newcommand\cN{\mathcal{N}}
\newcommand\cQ{\mathcal{Q}}
\newcommand\cS{\mathcal{S}}
\def\cC{{\mathcal C}}
\def\cE{{\mathcal E}}
\newcommand\eps{\varepsilon}
\newcommand{\vecone}{\vec{1}}
\newcommand{\Bin}{{\rm Bin}}
\newcommand\TV[1]{\left\|{#1}\right\|_{\mathrm{TV}}}
\newcommand\cbc[1]{\left\{{#1}\right\}}
\newcommand\brk[1]{\left\lbrack{#1}\right\rbrack}
\newcommand\pr{\mathbb{P}} 
\renewcommand\Pr{\pr}
\newtheorem{definition}{Definition}[section]
\newtheorem{theorem}[definition]{Theorem}
\newtheorem{lemma}[definition]{Lemma}
\newtheorem{corollary}[definition]{Corollary}
\newtheorem*{thmRCdis}{Theorem~\ref{thm:RCdis}}
\newtheorem*{thmRCord}{Theorem~\ref{thm:RCord}}
\newtheorem*{Lemquiet}{Lemma~\ref{lem:quietplanting}}
\newtheorem*{LemPottstoRC}{Lemma~\ref{lem:PottstoRC}}
\newtheorem*{Lemdecay}{Lemma~\ref{lem:2l-decay-bound-for-twopath}}
\newtheorem*{Lemgiantperc}{Lemma~\ref{lem:giantperc}}
\newtheorem*{LemmaTVatmostWB}{Lemma~\ref{lem:wsm-implied-from-wired-boundary}}
\newtheorem*{LemmaWBfromgiantpaths}{Lemma~\ref{lem:no-long-path-outside-giant=>wired}}
\newtheorem*{LemmaOrderedUpperBound}{Lemma~\ref{lem:ordered-less-than-(d-1)^-3}}
\begin{document}

\title{Planting and MCMC Sampling from the Potts model}

\author{Andreas Galanis, Leslie Ann Goldberg, Paulina Smolarova}
\thanks{
For the purpose of Open Access, the author has applied a CC BY public copyright licence to any Author Accepted Manuscript version arising from this submission.}

\address{Andreas Galanis, {\tt andreas.galanis@cs.ox.ac.uk}, University of Oxford, Department of Computer Science, Wolfson Bldg, Parks Rd, Oxford OX1 3QD, UK}

\address{Leslie Ann Goldberg, {\tt leslie.goldberg@cs.ox.ac.uk}, University of Oxford, Department of Computer Science, Wolfson Bldg, Parks Rd, Oxford OX1 3QD, UK}

\address{Paulina Smolarova, {\tt paulina.smolarova@trinity.ox.ac.uk}, University of Oxford, Department of Computer Science, Wolfson Bldg, Parks Rd, Oxford OX1 3QD, UK}
\thispagestyle{empty}
\begin{abstract} 
 We consider the problem of sampling from the ferromagnetic $q$-state Potts model on the random $d$-regular graph with parameter $\beta>0$. A key difficulty that arises in sampling from the model is the existence of a ``metastability'' window $\beta\in (\beta_u,\beta_u')$, where roughly the distribution has two competing modes, the so-called disordered and ordered phases. This  causes classical Markov-chain algorithms to be  slow mixing from worst-case initialisations.  Nevertheless, Helmuth, Jenssen and Perkins (SODA '19) designed a sampling algorithm that works for all $\beta$, when  $d\geq 5$ and $q=d^{\Omega(d)}$,  using polymers and cluster expansion methods; more recently, their analysis technique has been adapted  to show that a Markov chain (random-cluster dynamics) mixes fast when initialised appropriately, in the same regime of $q,d,\beta$.

 Despite these positive algorithmic results, a well-known bottleneck behind  cluster-expansion arguments is that they inherently only work for large $q$, whereas it is widely conjectured that sampling on the random $d$-regular graph is possible for all $q,d\geq 3$. The only result so far that applies to general $q,d\geq 3$ is by Blanca and Gheissari who showed that  the random-cluster dynamics mixes fast in the ``uniqueness'' regime $\beta<\beta_u$ where roughly only the disordered mode exists. For $\beta\geq \beta_u$ however, a second subdominant mode emerges creating bottlenecks and giving rise to correlations which have been hard to handle, especially for small values of $q$ and $d$.

  Our main contribution is   to perform a delicate analysis of the Potts distribution and the random-cluster dynamics that goes beyond the threshold $\beta_u$. We use planting as the main tool, a technique used in the analysis of random CSPs to capture how the space of solutions is correlated with the structure of the random instance. While planting arguments  provide only weak sampling guarantees generically, here we instead combine planting with the analysis of random-cluster dynamics to obtain significantly stronger guarantees. We are thus able to show that the random-cluster dynamics initialised from all-out mixes fast for all integers $q,d\geq 3$  beyond the uniqueness threshold $\beta_u$, all the way to the optimal threshold $\beta_c\in (\beta_u,\beta_u')$ where the dominant mode switches from disordered to ordered. A more involved analysis also applies to the ordered regime $\beta>\beta_c$ where we obtain an algorithm for all $d\geq 3$ and $q\geq (5d)^5$, improving significantly upon the previous range of $q,d$ by Carlson, Davies, Fraiman, Kolla, Potukuchi, and Yap (FOCS'22).
\end{abstract}

\maketitle 
\thispagestyle{empty}
\newpage
\pagenumbering{arabic}
\section{Introduction}\label{Sec_intro}
The Potts model is  a weighted model 
assigning probabilities to
(non-proper) $q$-colourings. The model originated in statistical physics but has since been a central object of study in various contexts; here, we focus on the computational problem of sampling from the  model.

For an integer $q\geq 3$ and a graph $G=(V,E)$, the $q$-state Potts model on $G$ with parameter $\beta$ is a probability distribution $\mu=\mu_{G,q,\beta}$ on $[q]^V$ where $[q]=\{1,\hdots,q\}$. Each configuration $\sigma\in [q]^V$ has weight $\mu(\sigma)=\emm^{\beta m(\sigma)}/Z_G$ where $m(\sigma)$ is the number of monochromatic edges under $\sigma$; the normalising factor $Z_G$ is the partition function. Throughout, we consider the ``ferromagnetic'' case $\beta>0$,  where configurations with many monochromatic edges are favoured in the distribution.

From a computational complexity perspective, sampling from the ferromagnetic Potts model has various twists relative to other similar models and the complexity of the sampling problem is widely open. On general graphs, the problem is \#BIS-hard for any fixed $\beta>0$ \cite{Leslie1}; for graphs of max degree $d$ (where $d$ is a fixed integer), the problem becomes \#BIS-hard   when $\beta> \beta_c$ \cite{BIShard}, where $\beta_c=\ln \frac{q-2}{(q-1)^{1-2/d}-1}$ is known as the ordered/disordered threshold (we will discuss this in more detail shortly). It is also conjectured that the problem admits a polynomial time algorithm when $\beta<\beta_c$ but this is open in general; in fact, many of the standard tools that are used to analyse Markov chains provably fail well below the $\beta_c$ threshold. The quintessential example here is the random $d$-regular graph, which underpins all the relevant phenomena behind this picture. For $dn$ even, we let $\mathbb{G}=\mathbb{G}_{n,d}$  be a graph chosen uniformly at random from the set of all $d$-regular graphs with $n$ vertices. We use ``whp'' as a shorthand for ``with probability $1-o(1)$ as $n$ grows large''.

For a typical random graph $\mathbb{G}$, it is known that  
a sample from the Potts distribution $\mu_{\mathbb{G}}$ is ``disordered'' for $\beta<\beta_c$, and ``ordered'' for $\beta>\beta_c$; roughly, this means that the colours in the former case appear equally often whereas in the second case one of the colours strictly dominates over the rest (see Lemma~\ref{lem:smallgraph} for the formal statement). 
The intricate feature of the Potts model on the random $d$-regular graph is that these competing modes are both present 
as ``local maxima''
throughout a window $(\beta_u,\beta_u')$ containing~$\beta_c$ 
even though only one of them has the vast majority of the mass (except at
$\beta=\beta_c$ itself,  where both modes appear with constant probability). This already poses problems to standard MCMC algorithms such as Glauber and Swendsen-Wang dynamics since the simultaneous presence of the  modes causes exponentially slow mixing from worst-case initialisations, see \cite{RCM-Helmuth2020,coja2023}. At a more conceptual level, any sampling algorithm has to take into account the presence of the other mode  which obliterates from the very start standard analysis tools (e.g., correlation decay/spectral independence).

Helmuth, Jenssen and Perkins \cite{RCM-Helmuth2020} introduced a cluster-expansion technique that allowed them to control more precisely how much a typical sample differs from the corresponding mode, and obtained an algorithm based on the interpolation method for all $\beta>0$ when $d\geq 5$ and $q\geq d^{\Omega(d)}$; their algorithm applies more generally to expander graphs (under some mild conditions). See also the works \cite{UG1,UG2,BIS2} for various refinements of their method. Following a series of developments \cite{SinclairsGheissari2022, RClattice}, this cluster expansion  expansion argument has been converted into an  MCMC algorithm (for large \(d\) and \(q\)) \cite{RCrandom}  using the random-cluster dynamics with appropriate initialisation that avoids the bottlenecks in the distribution (see Section~\ref{sec:RCdynamics} for details). See also \cite{BlaGhe,Blanca2, gheissari2025rapid} for closely related results on the random $d$-regular graph (and lattices) that apply for large $\beta$.

Despite these positive algorithmic developments, the cluster expansion technique that underpins the analysis of these algorithms is inherently ``perturbative'',  relying roughly on controlling how much a typical configuration differs from the max-energy configurations and arguing that the difference is relatively small. In the case of the Potts model, the resulting estimates are  accurate only when $q$ is quite large relative to $d$. By contrast, it is conjectured that sampling on the random regular graph should be possible for all $q,d\geq 3$ (and all $\beta>0$), and hence  more precise tools are needed to settle the picture for small $q,d$. The only result so far that works for general $q,d\geq 3$ is by Blanca and Gheissari \cite{BG} who showed that random-cluster dynamics mixes in $O(n\log n)$ time when $\beta<\beta_u$, for arbitrary initialisation. Note that for $\beta>\beta_u$ the results of \cite{coja2023} give worst-case initialisations that slow down the mixing time of the Glauber dynamics to $\emm^{\Omega(n)}$; more generally, for $\beta>\beta_u$, the presence of the ordered mode imposes correlation phenomena that are hard to handle, especially for small values of $q,d$.

\subsection{Results} Our main contribution is to do a delicate analysis of the Potts distribution and the random-cluster dynamics that goes  beyond the threshold $\beta_u$, using planting. Planting is a technique used in the analysis of random CSPs to capture how the space of solutions is correlated with the structure of the random instance, see \cite{Plant1,Plant2, Plant3} for some applications. In terms of sampling however, there is no recipe for converting planting methods to efficient sampling algorithms, and typically some extra sampling step is needed on top, see for example \cite{Plant1a,Plant1b} for such approaches; even so, the resulting sample usually has some accuracy limitation, i.e., the TV-distance from the target distribution cannot be made arbitrarily small (typically 
it is required to be at least an inverse polynomial in~$n$). Here, we instead use planting as a means to obtain sharp estimates that are used in the analysis of random-cluster dynamics; we thus show fast convergence of the latter, yielding automatically strong sampling guarantees. A different sampling idea that uses  planting in a somewhat related manner has recently been considered in \cite{huang2025weak}, in the context of sampling from spherical spin glasses.

Using this planting framework, we  give an algorithm to sample from the Potts distribution on the random $d$-regular graph $\mathbb{G}$ that covers the optimal range of ``high temperatures'' $\beta<\beta_c$ for all integers $q,d\geq 3$; as we will explain in more detail later the algorithm is running random-cluster dynamics for $O(n\log n)$ steps, initialised appropriately (from the ``empty'' configuration). 
\begin{theorem}\label{thm:main1}
For integers $d,q\geq 3$ and real $\beta\in (0,\beta_c)$, the following holds whp over $\mathbb{G}=\mathbb{G}_{n,d}$.

There is an MCMC algorithm that, on input $\mathbb{G}$ and $\eps\geq \emm^{-\Omega(n)}$, outputs in  $O\big(n\log n\log(\frac{1}{\eps})\big)$ steps a sample $\hat\sigma\in [q]^n$ whose distribution $\hat\mu$ satisfies $\big\|\hat \mu -\mu_{\mathbb{G},q,\beta}\big\|_{\mathrm{TV}}\leq \eps$.
\end{theorem}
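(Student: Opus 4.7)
The plan is to reduce Potts sampling to sampling from the associated random-cluster (RC) measure via the Edwards--Sokal coupling (Lemma~\ref{lem:PottstoRC}), then run random-cluster dynamics initialised at the empty (all-edges-absent) configuration. The Potts--RC reduction ensures that an $\eps$-TV sample of the RC configuration yields an $\eps$-TV sample of $\mu_{\mathbb{G},q,\beta}$ after independently colouring each RC component with a uniform colour from $[q]$, at no additional asymptotic cost. Hence the target reduces to an $O(n\log n)$ mixing estimate for RC dynamics from empty, valid throughout $\beta\in(0,\beta_c)$.

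Because the empty configuration lies in the basin of attraction of the disordered phase, the chain should quickly enter and then mix within the disordered region $\Sdis$ of RC configurations (those whose cluster profile matches subcritical bond percolation, with no giant component and balanced Edwards--Sokal colour classes). The strategy has two parts. First, a bottleneck/conductance bound (Lemma~\ref{lem_bottleNeck}) shows that, started in $\Sdis$, the chain cannot escape on any polynomial timescale; in particular, excursions to configurations resembling the ordered phase have probability $\emm^{-\Omega(n)}$. Second, the RC dynamics \emph{restricted} to $\Sdis$ mixes in $O(n\log n)$ steps. Combined with $\mu_{\mathbb{G},q,\beta}(\Sdis)\geq 1-\emm^{-\Omega(n)}$ for $\beta<\beta_c$ (Theorem~\ref{thm:RCdis}), a standard restricted-chain comparison yields TV-accuracy $\eps\geq \emm^{-\Omega(n)}$ and time $O(n\log n\log(1/\eps))$.

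Planting is the key analytic device powering both ingredients. The quiet-planting lemma (Lemma~\ref{lem:quietplanting}) identifies the conditional measure $\mu_{\mathbb{G},q,\beta}(\cdot\mid\Sdis)$ with a tractable planted measure: one first draws a disordered RC configuration from a reference distribution tuned to the appropriate colour proportions near $(1/q,\ldots,1/q)$ and the corresponding subcritical edge density, then draws the random regular graph conditional on it. Contiguity transfers whp properties from the planted side to the conditional Potts/RC measure on a typical $\mathbb{G}$. Under this lens, one can read off local marginals, absence of giant components (Lemma~\ref{lem:giantperc}), and a two-path correlation decay estimate (Lemma~\ref{lem:2l-decay-bound-for-twopath}) stating that the influence between the endpoints of a typical length-two path is negligible. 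The two-path decay is precisely the contraction-type input required to prove $O(n\log n)$ restricted mixing, e.g.\ via block dynamics or a path-coupling contraction on $\Sdis$.

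The main obstacle is the window $\beta\in(\beta_u,\beta_c)$, where the ordered phase is already a local maximum of the free-energy landscape even though the disordered phase dominates the mass. Here no global argument from arbitrary initialisation can work, and the bottleneck estimate must be proved with a \emph{linear} free-energy gap uniformly over $\beta<\beta_c$: the planting calculation has to pin down sharply the relative weight of configurations that interpolate between $\Sdis$ and its complement, in particular those forming a near-critical percolation cluster. A subsidiary difficulty is that the planted reference measure carries mild correlations induced by conditioning on edge density and colour proportions, so the contiguity must be quantitative enough to absorb $\emm^{-\Omega(n)}$ error terms without degrading the two-path decay. Once the bottleneck and the restricted-mixing estimate are both in place, Theorem~\ref{thm:main1} follows by standard restricted-to-global mixing arguments together with Lemma~\ref{lem:PottstoRC}.
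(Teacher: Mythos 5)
Your high-level framing is right (Edwards--Sokal reduction, then mixing for RC dynamics started from empty within the disordered phase, with planting doing the heavy lifting on the analysis side), but several load-bearing details are misidentified, and the mixing mechanism you describe does not match the paper's. First, the mass concentration $\mu_{\mathbb G}(\Sdis)\geq 1-\emm^{-\Omega(n)}$ for $\beta<\beta_c$ is Lemma~\ref{lem:smallgraph} (transferred to RC in Corollary~\ref{cor:smallgraphb}), not Theorem~\ref{thm:RCdis}; the latter is the mixing-time statement you are trying to establish. Second, your ``bottleneck/conductance bound'' points at a lemma that does not appear in the paper; the actual mechanism controlling escape is simply Corollary~\ref{cor:boundary-of-phases-are-unlikely}, which shows a sample from $\pi_{G,\dis}$ is exponentially unlikely to be near the phase boundary, so the \emph{stationary} restricted chain $\hat X_t$ essentially never ignores an update. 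Third, Lemma~\ref{lem:2l-decay-bound-for-twopath} is not about length-two paths: the exponent $2\ell$ with $\ell\approx(\tfrac12-\delta)\log_{d-1}n$ arises because a boundary influence must propagate along \emph{two disjoint} root-to-leaf paths of length $\ell$; the factor $\hat p^{2\ell}$ is exactly what beats the $\sim n^{1/2}$-size sphere $S_\ell(v)$.

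More substantively, the restricted mixing is not proved via path coupling, block dynamics, or a restricted-to-global conductance comparison. The paper follows the Gheissari--Sinclair ``weak spatial mixing within a phase'' template: planting (Lemmas~\ref{lem:quietplanting}, \ref{lem:statistics}) shows the percolated planted graph is subcritical, giving $K$-shattering (Lemma~\ref{lem:dispath}); this plus the two-path decay yields WSM within the disordered phase (Theorem~\ref{thm:wsm-disordered}); then one monotonically couples the all-out chain $X_t$ with a stationary restricted chain $\hat X_t$ and a localised chain $X^v_t$ on $B_\ell(v)$, and closes the argument with a log-Sobolev estimate for the RC dynamics on a treelike ball with free boundary. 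The monotone-coupling step is what replaces your ``restricted-chain comparison.'' If you want to run a genuine conductance/bottleneck argument instead, you would need a quantitative free-energy-barrier estimate separating $\Omega_{G,\dis}$ from $\Omega_{G,\ord}$ throughout $\beta\in(\beta_u,\beta_c)$, which the paper deliberately avoids by working with per-edge marginals and WSM rather than global conductance.
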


Our analysis framework also applies to the ordered regime $\beta>\beta_c$ where we 
improve significantly upon various restrictions from previous works \cite{UG2,UG1, RCM-Helmuth2020, RCrandom}. For example, \cite{RCM-Helmuth2020, RClattice} applied when  $q\geq d^{\Omega(d)}$ (and $d\geq 5$), whereas the  recent work by Carlson et al \cite{UG2} (see also \cite{UG1}) applied when $d\geq d_0$ and $q\geq d^c$ for some large constants $d_0, c$. Using planting, we obtain a sampling result for all $d\geq 3$ when $q$ is roughly larger than $d^5$. We show the following.
\begin{theorem}\label{thm:main2}
For integers $d\geq 3,q\geq (5d)^5$  and real $\beta>\beta_c$, the following holds whp over $\mathbb{G}=\mathbb{G}_{n,d}$.

There is an MCMC algorithm that, on input $\mathbb{G}$ and $\eps\geq \emm^{-\Omega(n)}$, outputs in  $O\big(n\log n\log(\frac{1}{\eps})\big)$ steps a sample $\hat\sigma\in [q]^n$ whose distribution $\hat\mu$ satisfies $\big\|\hat \mu -\mu_{\mathbb{G},q,\beta}\big\|_{\mathrm{TV}}\leq \eps$.
\end{theorem}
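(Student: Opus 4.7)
The plan is to combine the planting machinery developed for Theorem~\ref{thm:main1} with a polymer-based analysis of the ordered phase. At $\beta>\beta_c$, the Potts distribution concentrates on configurations in which a single colour dominates, and in the random-cluster representation this corresponds to a giant monochromatic component covering a $1-o(1)$ fraction of vertices. The algorithm runs random-cluster dynamics initialised from the all-edges-present configuration for $O(n\log n\log(1/\eps))$ steps; the task is to prove that the chain mixes within the ordered phase in $O(n\log n)$ steps and essentially never escapes it.

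First I would set up the polymer representation of ordered configurations: a polymer $\gamma$ is a connected defect, i.e., an edge/vertex set excluded from the monochromatic giant together with its alternative colouring. Conditioning on the dominant colour, the ordered part of the Potts partition function factorises (approximately) as a sum over compatible polymer families with weights $w(\gamma)$, and one needs a Kotecky--Preiss type decay bound $w(\gamma)\leq q^{-c|\gamma|}$ for some explicit $c>0$. Each vertex peeled off the giant pays a factor $\approx 1/q$ from the colour choice and each edge kept out of the giant pays $\approx \emm^{-\beta}$. The hypothesis $q\geq (5d)^5$ suffices, as opposed to the $q\geq d^{\Omega(d)}$ required in \cite{RCM-Helmuth2020,RClattice}, because planting provides a sharper estimate of the typical size of the monochromatic giant component on $\mathbb{G}_{n,d}$ than the crude max-energy bound used previously.

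Second I would certify the structural hypotheses on $\mathbb{G}$ via planting. Define a planted distribution $\mu^{\mathrm{plant}}$ on pairs $(\mathbb{G},\sigma)$ by first sampling an ordered Potts configuration $\sigma$ on a random $d$-regular graph and then performing first/second-moment calculations on $\mathbb{G}$ conditioned on $\sigma$. Standard contiguity arguments then transfer properties from $\mu^{\mathrm{plant}}$ to the Potts-weighted graph distribution, yielding that whp over $\mathbb{G}$ the relevant local statistics---degree profiles inside and across colour classes, short-cycle counts, and expansion of small vertex subsets---satisfy the quantitative conditions required to invoke the polymer convergence from the first step uniformly.

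Finally I would translate this into an $O(n\log n)$ mixing bound. Following the strategy of \cite{RCrandom}, the product structure of the ordered measure over polymers yields a modified log-Sobolev inequality with constant $\Omega(1/n)$ for the chain restricted to the ordered phase, and the all-edges-present initialisation sits at the polymer-free configuration. The main obstacle is controlling escape from the ordered phase: one must show that along the $O(n\log n\log(1/\eps))$ trajectory the chain enters a disordered configuration with probability at most $\emm^{-\Omega(n)}$, which in turn reduces to a bottleneck bound showing that the ordered/disordered interface carries weight $\emm^{-\Omega(n)}$. This is the step with the least slack under $q\geq (5d)^5$: the polymer decay rate must beat the combinatorial entropy of macroscopic defects, and calibrating these two quantities sharply enough is where I expect the bulk of the technical work to lie.
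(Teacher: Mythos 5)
Your high-level scaffolding is right---the algorithm is random-cluster dynamics from all-in, and one needs (i) mixing within the ordered phase in $O(n\log n)$ steps and (ii) that the chain escapes the phase only with probability $\emm^{-\Omega(n)}$---but the core technical engine you propose is not the one the paper uses, and it runs directly into an obstruction the paper is designed to avoid. You propose a Kotecky--Preiss polymer expansion of the ordered part of the partition function, claiming that planting sharpens the polymer weight bounds enough to replace the $q\geq d^{\Omega(d)}$ or $q\geq d^c$ requirements of \cite{RCM-Helmuth2020,RClattice,UG1,UG2} by $q\geq(5d)^5$. This is precisely the bottleneck the introduction flags: the cluster-expansion route inherently needs the polymer decay to beat defect entropy, and it is not clear that planting (which controls the typical colour/edge statistics, i.e.\ the \emph{size} of the giant, not the combinatorial weight of a single polymer relative to the wired ground state) improves this tradeoff. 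You don't give a mechanism by which a tighter estimate of $|\cC_1|$ sharpens the per-vertex factor in a polymer weight; without that, the claim that $q\geq(5d)^5$ suffices is unsubstantiated.

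The paper's actual route is different and worth contrasting. Instead of a polymer expansion, it uses the notion of weak spatial mixing within a phase from \cite{SinclairsGheissari2022}: one shows (Theorem~\ref{thm:wsm-ordered}) that for every vertex $v$ the marginal of an edge at $v$ under the wired $\ell$-ball boundary is within $\frac{1}{100m}$ of the ordered-phase marginal, and then standard restricted-chain plus local log-Sobolev arguments give the $O(n\log n)$ mixing. WSM in the ordered phase is reduced (Lemmas~\ref{lem:wsm-implied-from-wired-boundary} and~\ref{lem:no-long-path-outside-giant=>wired}) to showing that every vertex has a wired boundary in $B_\ell(v)$, i.e.\ that with high probability under $\pi_{G,\ord}$ there is no simple path of length $\ell$ from $v$ avoiding the giant component. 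This is where planting enters (Lemma~\ref{lem:ordpath}): one analyses, in the planted ordered graph $(\hat\G(\sord),\sord)$, the probability that a specified $\tau$-coloured path avoids the giant, and aggregates over colourings via a transfer-matrix computation. The exponent $q\geq(5d)^5$ comes out of Lemma~\ref{lem:ordered-less-than-(d-1)^-3}, which bounds the top eigenvalue-like quantity $A=\max_i\{1-(1-\sqrt{\hat\varphi_1})\rho_{\ord,i1}/\nu_{\ord,i}\}$ by $(d-1)^{-5}$ using the explicit solution $\varphi_1=1/t^{d-1}$ and the inequality $1/(q-1)^{(d-2)/d}<\tfrac{1}{2}(d-1)^{-5}$; this has nothing to do with polymer weight decay. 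If you want to pursue your outline, the missing idea you'd need to supply concretely is exactly that eigenvalue/extinction-probability estimate and the path-to-wired-boundary reduction; the polymer story alone will not get below the known $q\geq d^{c}$ barriers.
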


\subsection{Proof outline}\label{sec:rf4f4f3} Our algorithms are based on a well-known edge representation of the Potts model, the random-cluster (RC) model, where configurations are edge subsets weighted by the number of edges present and the number of induced components. We run the random-cluster dynamics (the analogue of Glauber dynamics in this edge setting), starting from the empty set of edges in the disordered regime $\beta<\beta_c$ and the full set of edges in the ordered regime $\beta>\beta_c$. The reason for working in the RC representation is that, in contrast to the Potts model, the RC model enjoys certain monotonicity properties that facilitate tracking the evolution of Glauber dynamics.  In the next Preliminaries section we define the RC model and the relevant random-cluster chain.  We also give a more detailed overview of the phases of the Potts model, and how planting gives a handle for the disordered and ordered settings. Most of these parts are largely imported from \cite{coja2023,BIShard}. In Section~\ref{sec:RCmixing}, we  convert the planting for Potts into suitable planting for the RC model, giving the analogue of the ordered and disordered phases in that setting. Then, we state the main contribution of this work which is establishing weak spatial mixing within the disordered and ordered phases (a notion that originated in \cite{SinclairsGheissari2022}). Sections~\ref{sec:disordered-regime} and~\ref{sec:orderedproof} give the technical core of our arguments.

Roughly, in the disordered regime (Section~~\ref{sec:disordered-regime}), the key ingredient is   to show that the dynamics converges to a set of configurations that do not form large components. For $\beta<\beta_u$,  Blanca and Gheissari~\cite{BG} accomplished this by a very careful random-graph revealing process coupled with the evolution of random-cluster dynamics. Using planting, we are able to capture precisely the correlation between the random graph and a sample from stationarity for $\beta<\beta_c$, reducing the study of the component structure to showing that the planted random graph is subcritical. Using the monotonicity of the RC model, this implies that the RC dynamics initialised from the empty set (``all-out'') enjoys the same component structure as the stationary distribution (for exponentially many steps). Having this in place (Lemma~\ref{lem:dispath}), the rest of the proof is more streamlined after some small reworking of certain correlation decay estimates up to the ordered/disordered threshold $\beta_c$  (Lemma~\ref{lem:2l-decay-bound-for-twopath}).

In the ordered regime (Section~\ref{sec:orderedproof}), the planted model is instead  supercritical and a typical configuration has a ``giant'' component, whose presence complicates the arguments significantly. The goal now is roughly to show that bichromatic edges do not typically form large components; more precisely, the key ingredient is to show that every vertex $v$ is surrounded by close-by vertices (each at some distance $\ell<\tfrac{1}{2}\log_{d-1}n$)  that belong to the giant component, a so-called ``wired'' boundary.   Planting here allows us to do a head-on analysis of the probability that, in a typical sample from stationarity, there exists a path of length $\ell$ that does not include any vertex in the giant. We get a tight bound for this using careful first moment arguments and percolation bounds; then, for $q\geq (5d)^5$, a union bound over the paths starting at $v$ gives the existence of the desired wired boundary (in a typical configuration).  Relative to the earlier works \cite{RCM-Helmuth2020,UG1,UG2,RCrandom},  bichromatic components roughly correspond to so-called polymers; our techniques show that a smaller lower bound on the size of the giant component suffices in order to ensure that the polymers are not large.

\subsection{Further Discussion} The planting arguments that we use in this paper follow by careful first and second moment considerations in \cite{BIShard}, which have been carried out for the ferromagnetic Potts model, see also \cite{bapst2017planting} for similar-flavored ``silent'' planting results. The moment analysis has previously been used in \cite{Chen} for sampling independent sets and proper colourings in random bipartite graphs, though there  the approach  ultimately relies on cluster-expansion type of arguments \cite{BIS1,BIS2}. Our more direct planting approach can be carried out for sampling independent sets on random bipartite graphs (that shares similar monotonicity and planting properties) and obtain sharper bounds for sampling using Glauber dynamics. Note however that, as  pointed out in \cite[Lemma 10]{Chen}, it is unlikely that current polymer-based techniques will achieve better asymptotic estimates in terms of the degree $d$. Our technique should also apply to the $G(n,d/n)$ setting (for arbitrary $q,d>1$) provided that the analogous ``silent'' planting is in place; this seems more easily feasible in the disordered regime but more difficult for the ordered regime where the Poisson-tree neighbourhoods are expected to cause bigger variance.

It is also relevant to note that the results we stated earlier  for the cluster-expansion technique and fast mixing of the random-cluster dynamics in the literature apply to non-integer $q$, in the random-cluster representation. It is conceivable that a similar planting to the one we consider here for the Potts model can be carried out for non-integer $q$ in the random-cluster representation, though the related first and second moment computations will likely require careful arguments for the distribution of the number of components in random graph percolation; see \cite{bencs2022random} for some related developments.

% \subsection{Discussion}\label{sec:discussion}
% Our slow mixing result for Glauber dynamics when $\beta>\uniq$ (Theorem~\ref{Thm_meta}) significantly improves upon previous results of Bordewich, Greenhill and Patel~\cite{BGP} that applied to $\beta>\uniq+\Theta_q(1)$. Similarly, our slow mixing result for Swendsen-Wang dynamics when $\beta\in(\uniq,\KS)$ (Theorem~\ref{thm:SWslow}) strengthens earlier results of Galanis, \v{S}tefankovi\v{c}, Vigoda, Yang \cite{Andreas}  which applied to $\beta=\curie$, and by Helmuth, Jenssen and Perkins~\cite{HJP} which applied for  a small interval around $\curie$; both results applied only   for  $q$ sufficiently large.  To obtain our result for all integers $q,d\geq 3$, we need to carefully track how SW evolves on the random regular graph for configurations starting from the ferromagnetic and paramagnetic phases, by accounting for the percolation step via delicate arguments, whereas the approaches of \cite{Andreas,HJP} side-stepped this analysis by considering the change in the number of monochromatic edges instead.

% Finally, let us note that the case of the grid has qualitatively different behaviour than the mean-field and the random-regular case. There, the three critical points coincide and the behaviour at criticality depends on the value of $q$; the mixing time of Glauber and SW has largely been detailed, see \cite{BS,LS,GL}.

\section{Preliminaries}\label{sec:prelims}

\subsection{The configuration model} To obtain a handle on the random $d$-regular $\mathbb{G}=\mathbb{G}_{n,d}$, we work throughout in the configuration model. Formally, for integers $n,d\geq 1$ with $dn$ even, let $[n]\times [d]$ be a set of ``half-edges''; we denote by $G\sim\cG_{n,d}$ a (multi)graph on vertex set $[n]$ whose edges are obtained by taking a uniformly random perfect matching of the half-edges; we add an edge between $u,v\in [n]$ in $G$ whenever the half-edges $(u,i)$ and $(v,j)$ are matched together for some $i,j\in [d]$. For brevity, instead of $G\sim \cG_{n,d}$ we sometimes use $\G$ to denote the random (multi)graph chosen. Any property that holds whp over $\G$ also holds whp over $\mathbb{G}$ (see, e.g., \cite{JLR}), so henceforth we focus  on the configuration model $\G$.

\subsection{The random-cluster dynamics} \label{sec:RCdynamics}

Given a graph~$G=(V,E)$ and real parameters $q>0$ and $p \in [0,1]$, the random-cluster model is a probability distribution $\pi=\pi_{G,q,p}$ on the subsets of $E$. Let $\Omega_G$ be the set of all configurations. Each configuration $F\subseteq E$ has probability
$\pi(F)=q^{c(F)} p^{|F|} (1-p)^{|E\setminus F|} / Z_{RC,G}$ where $c(F)$ is the number of connected components in the graph $(V,F)$, and $Z_{RC,G}$ is a normalisation factor.
For integer $q$ and $p = 1-\emm^{-\beta}$, it is well known that sampling from this probability distribution  is equivalent to sampling from the $q$-state Potts model with parameter $\beta$, see Section~\ref{sec:RCmixing} for details.

Let $\hat p := \frac{p}{(1-p)q + p}$.
The random-cluster (RC) dynamics initialised from a configuration $X_0$ is  a Markov chain $(X_t)_{t\geq 0}$ whose transition $X_t$ to $X_{t+1}$ is as follows:

\begin{enumerate}
    \item First choose an edge \(e\in E\) uniformly at random.
    \item If \(e\) is a cut edge of the graph \((V, X_t\cup \{e\})\), then with probability \(\hat p\), set \(X_{t+1} := X_t\cup\{e\}\). With all remaining probability set \(X_{t+1} := X_t\setminus \{e\}\)
    \item Otherwise, with probability \(p\), set \(X_{t+1} := X_t\cup\{e\}\). With all remaining probability set \(X_{t+1} := X_t\setminus \{e\}\).
\end{enumerate}
It is a standard fact that the distribution of $X_t$ converges to $ \pi_G$, regardless of the initialisation. To work around the slow convergence in the metastability window when $\beta\in (\beta_u,\beta_u')$, we will later consider the RC dynamics starting from the two extreme configurations, the  all-out ($X_0=\emptyset$) and the all-in $(X_0=E)$.

\subsection{Phases of the Potts model}

Let  $\nu=(\nu_1,\hdots,\nu_q)$ be a $q$-dimensional probability vector. For an $n$-vertex graph $G$, let 
\[S_\nu(\theta)=\Big\{\sigma\in[q]^{V(G)}:\sum_{i\in[q]}\big|| \sigma^{-1}(i)|-n\nu_i\Big|\leq \theta n\Big\},\]
where $\theta=\theta(q,d,\beta)>0$ is a small constant which we will suppress from notation. Hence, $S_\nu(\theta)$ consists of these configurations where the colour frequencies are given by the vector $\nu$. Let also
\[\mbox{$Z_{\nu}(G)=\sum_{\sigma\in S_\nu} w_G(\sigma)$ and $\Psi=\Psi_{d,q,\beta}(\nu)=\lim_{\theta\downarrow 0}\lim_{n\rightarrow \infty} \tfrac{1}{n}\ln\Ex[Z_\nu(\G)]$,}\]
where $w_G(\sigma) = e^{\beta m(\sigma)}$ is the weight of $\sigma$.
In short, the function $\Psi(\nu)$ captures the (logarithm of the) expected contribution to the partition function $Z(\G)$ of the random graph $\G$ given by the configurations 
 in $S_\nu$. As we shall see shortly, the local/global maximisers of $\Psi$ play a key role in determining the modes of the Gibbs distribution.

 There are three relevant thresholds that come into play
 \[\beta_u(q,d)<\beta_c(q,d)<\beta'_u(q,d).\]
 For high temperatures ($\beta<\beta_u$), there is a unique global maximiser of $\Psi$, given by the uniform vector \[\nudis=\big(\tfrac{1}{q},\hdots, \tfrac{1}{q}\big)\]
which corresponds to the ``disordered'' configurations. As the temperature decreases and $\beta$ crosses the threshold $\beta_u$, another local maximum of $\Psi$ emerges at an ``ordered'' vector $\nuord$ where one of the $q$ colours dominates over the others (which are roughly balanced). More precisely, $\nuord$ is given by\footnote{\label{fn:defbetau} The value of $\beta_u$ is defined as the infinimum value of $\beta>0$ such that there exists $a\in (1/q,1)$ which satisfies \eqref{eq:nuord}; then it is simple to check that such an $a$ exists for all $\beta\geq \beta_u$. For certain values of $\beta$ (namely when $\beta\in (\beta_u,\beta_u')$), there may be more than one values of $a\in (1/q,1)$ that satisfy the equation; when this is the case, the value of $a$ that is relevant for the ordered phase is the largest such value.} \begin{equation}\label{eq:nuord}\nuord=\big(a,\tfrac{1-a}{q-1},\hdots, \tfrac{1-a}{q-1}\big), \mbox{ and $a\in (\tfrac{1}{q},1)$ satisfies $a=\frac{t^{d}}{t^{d}+q-1}$ where $\emm^{\beta}-1=\frac{(t-1)(t^{d-1}+q-1)}{t^{d-1}-t}$.}
 \end{equation}
Interestingly, the local maximum at $\nuord$ does not become global until $\beta\geq \beta_c$ where $\beta_c=\ln\frac{q-2}{(q-1)^{1-2/d}-1}$ is known as the ordered/disordered threshold; notably, the only point where both $\nu_\ord$ and $\nu_\dis$ are global maxima is at $\beta=\beta_c$. Note also that $\nudis$ continues being a local maximum till $\beta<\beta_u'$ where $\beta_u'=\ln (1+\frac{q}{d-1})$; so,  in the interval $(\beta_u,\beta_u')$ both $\nudis$ and $\nuord$ are local maxima of $\Psi$ which causes the metastability phenomena mentioned in the introduction, see \cite{coja2023} for details.

For convenience, let $\Sdis$ and $\Sord$  denote the sets of configurations $S_\nu$ when $\nu=\nudis$ and $\nu=\nuord$, respectively. For a graph $G$,  define  $\Zdis(G)$ and $\Zord(G)$ analogously, and let $\mu_{G,\dis}=\mu_{G}(\cdot \mid\Sdis)$ and $\mu_{G,\ord}=\mu_{G}(\cdot\mid  \Sord)$ denoted the conditional Gibbs distributions. The following lemma from \cite{BIShard} gives some sharp concentration estimates for $Z_{\dis}(\G)$ and $Z_{\ord}(\G)$ (derived using moments and the small subgraph conditioning method) and are the key behind our planting approach.
\begin{lemma}[\mbox{\cite[Theorem 5 \& Lemma 16]{BIShard}}]\label{lem:smallgraph}
Let $d,q\geq 3$ be integers, and $\beta\geq 0$ be real.\footnote{Here, we assume that the constant $\theta>0$, used in the definition of \(\Zdis\) and \(\Zord\),  is a sufficiently small constant depending only on $d,q,\beta$.} Let   $f(n)>0$ be any function with $f(n)=o(1)$. Whp over $\G$, we have:
\[\mbox{ $\Zdis(\G)\geq f(n)\, \Ex[\Zdis(\G)]$ if $\beta\leq \beta_c$} \quad \mbox{and} \quad \mbox{ $\Zord(\G)\geq f(n)\, \Ex[\Zord(\G)]$ if $\beta\geq \beta_c$}.\]
Moreover, $\mu_{\G}\big(\Sdis\big)=1-\emm^{-\Omega(n)}$ when $\beta<\beta_c$, and $\mu_{\G}\big(\tSord\big)=1-\emm^{-\Omega(n)}$ when $\beta>\beta_c$, where $\tSord$ is the union of $\Sord$ with its $q-1$ permutations. For $\beta=\beta_c$, it holds that $\mu_{\G}\big(\Sdis\cup \tSord\big)=1-\emm^{-\Omega(n)}$ and $\mu_{\G}(\Sdis),\mu_{\G}(\Sord)\geq f(n)$.
\end{lemma}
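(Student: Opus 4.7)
Since this lemma is imported verbatim from \cite{BIShard}, the plan follows the standard moment analysis for spin models on the configuration model, augmented by the small subgraph conditioning method of Robinson and Wormald.

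First I would compute $\Ex[Z_\nu(\G)]$ for $\nu \in \{\nudis, \nuord\}$ by grouping configurations $\sigma$ according to their colour-frequency vector $\vec k$ close to $n\nu$ and using that the edges of $\G$ form a uniformly random matching of the $nd$ half-edges. The expectation factors into a multinomial colour-count times a factor counting the expected number of monochromatic pairings, and a Stirling/Laplace expansion yields $\tfrac{1}{n}\ln\Ex[Z_\nu(\G)] \to \Psi(\nu)$, with the sum dominated by $\vec k = n\nu + O(\sqrt{n})$. Next I would expand $\Ex[Z_\nu(\G)^2]$ as a sum over pairs of configurations indexed by their joint colour-overlap matrix $\rho=(\rho_{ij})$ with row and column marginals $\nu$. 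The ``uncorrelated'' overlap $\rho_{ij}=\nu_i\nu_j$ contributes exactly $(\Ex[Z_\nu(\G)])^2$, and the analytic crux is verifying that in the required $\beta$-range (i.e.\ $\beta<\beta_u'$ for $\nudis$, $\beta>\beta_u$ for $\nuord$) the uncorrelated overlap is the unique global maximiser of the pair free energy on the overlap polytope. Granting this, $\Ex[Z_\nu(\G)^2]=O((\Ex[Z_\nu(\G)])^2)$ and Paley--Zygmund gives $Z_\nu(\G)=\Omega(\Ex[Z_\nu(\G)])$ whp.

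To upgrade the constant factor to any $f(n)=o(1)$, I would invoke small subgraph conditioning. The fluctuations of $Z_\nu(\G)$ that survive the second-moment computation are carried entirely by the short-cycle counts $(C_k)_{k\geq 3}$, which are asymptotically independent Poisson with means $\lambda_k=(d-1)^k/(2k)$. One checks the Robinson--Wormald hypothesis $\Ex[Z_\nu(\G)\, C_k]/\Ex[Z_\nu(\G)] = \lambda_k(1+\delta_k)$ for suitable $\delta_k>-1$ with $\sum_k\lambda_k\delta_k^2$ finite and equal to the second-moment ratio; the theorem then gives that $Z_\nu(\G)/\Ex[Z_\nu(\G)]$ converges in distribution to an a.s.\ positive limit, which yields the claimed bound for every $o(1)$ function $f$. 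For the Gibbs-measure statements, I would compare the expected partition-function contributions of the two phases. The variational picture (immediate from the definitions of $\beta_u,\beta_c,\beta_u'$) gives $\Psi(\nudis) > \Psi(\nu) + \Omega(1)$ uniformly over $\nu$ bounded away from $\nudis$ when $\beta<\beta_c$, and the symmetric statement for $\nuord$ when $\beta>\beta_c$. Combining Markov applied to $\Ex[Z(\G)-Z_{\mathrm{dom}}(\G)]$ with the whp lower bound on $Z_{\mathrm{dom}}(\G)$ already established delivers $\mu_\G(\Sdis)=1-\emm^{-\Omega(n)}$ in the disordered regime and $\mu_\G(\tSord)=1-\emm^{-\Omega(n)}$ in the ordered regime, using the $q$-fold colour-permutation symmetry to account for the permutations of $\nuord$ collected in $\tSord$. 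At $\beta=\beta_c$ the two free energies are tied, so both $\mu_\G(\Sdis)$ and $\mu_\G(\Sord)$ are $\Omega(f(n))$ whp while together they capture all but $\emm^{-\Omega(n)}$ of the mass.

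The main obstacle is the second-moment optimisation. Near $\nudis$, strict concavity of the pair free energy has to be pushed all the way up to $\beta_u'$, which requires a careful matrix/Hessian argument on the overlap simplex. Near $\nuord$, one must additionally rule out competing ``aligned'' overlaps where the dominant colour pairs with itself more than $a^2$; this competitor appears exactly when a new fixed point of the defining equation in \eqref{eq:nuord} bifurcates, which is the very event used to define $\beta_u$ (see footnote~\ref{fn:defbetau}), making the argument sharp. As this is the technical heart of \cite{BIShard}, I would quote their computation rather than reprove it.
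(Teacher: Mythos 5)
The paper does not prove this lemma at all: it is imported verbatim from \cite{BIShard}, so there is no in-paper argument to compare your proposal against. That said, your reconstruction of the \cite{BIShard} proof -- first/second moment computations via the Laplace method on colour-frequency and overlap simplices, small subgraph conditioning with Poisson cycle counts to get an almost-surely positive distributional limit, and a comparison of $\Psi$-values plus Markov to control the non-dominant phase -- is an accurate description of the route taken there, and you correctly flag the second-moment optimisation near $\nuord$ as the technical core.

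One misstatement worth flagging: after obtaining $\Ex[Z_\nu(\G)^2]=O((\Ex[Z_\nu(\G)])^2)$, Paley--Zygmund yields $\Pr[Z_\nu(\G)\geq c\,\Ex Z_\nu(\G)]\geq c'$ only for \emph{some constant} $c'>0$, not whp. It is precisely the small subgraph conditioning step -- not Paley--Zygmund -- that upgrades this to a $1-o(1)$ probability (and simultaneously lets the multiplicative factor be any $o(1)$ function), because it shows $Z_\nu(\G)/\Ex[Z_\nu(\G)]$ converges in distribution to an a.s.\ positive random variable. Your sentence ``Paley--Zygmund gives $Z_\nu(\G)=\Omega(\Ex[Z_\nu(\G)])$ whp'' followed by ``to upgrade the constant factor'' inverts the roles: the issue that small subgraph conditioning fixes is the failure probability, not the constant. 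The conclusion you draw from the Robinson--Wormald theorem is nonetheless correct, so this is a wording error rather than a logical gap; the Gibbs-measure deductions then go through as you sketch (taking, say, $f(n)=1/n$ to absorb the polynomial loss from Markov against the exponential free-energy separation).
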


\subsection{Planting  disordered and ordered configurations}
We now introduce the planting distribution which will be the key in obtaining our results. Recall that $\G$ denotes a graph chosen according to the configuration model $\mathcal{G}_{n,d}$; in the expressions below, expectations are taken with respect to the choice of $\G$.    For $\sigma\in[q]^{n}$, define a random graph $\hat\G(\sigma)$ by letting
\begin{align}\label{eq:Gsigma}
\pr\Big[\hat\G(\sigma)=G\Big]&=\frac{\pr\brk{\G=G}\emm^{\beta m_G(\sigma)}}{\Ex[\emm^{\beta m_{\G}(\sigma)}]}.
\end{align}
Furthermore, for $\theta>0$, recalling the disordered and ordered partition functions $\Zdis,\Zord$ from the previous section, we define random configurations $\sdis=\sdis(\theta)\in [q]^n$ and $\sord=\sord(\theta)\in[q]^{n}$ with distributions
\begin{align}\label{eq:disord}	\pr\big[\sdis=\sigma]&=\frac{\vecone\cbc{\sigma\in\Sdis}\Ex[\emm^{\beta m_{\G}(\sigma)}]}{\Ex[\Zdis(\G)]},&
	\pr\big[\sord=\sigma]&=\frac{\vecone\cbc{\sigma\in\Sord}\Ex[\emm^{\beta m_{\G}(\sigma)}]}{\Ex[\Zord(\G)]}.
\end{align}
We refer to $(\hat\G(\sdis),\sdis\big)$ and $(\hat\G(\sord),\sord\big)$ as the disordered and ordered planted distributions respectively.

 The following lemma allows us to relate the planted distributions with the distributions of the (conditional) disordered  $\SIGMA_{\G,\dis}\sim \mu_{\G,\dis}$ and ordered $\SIGMA_{\G,\ord}\sim \mu_{\G,\ord}$ configurations of the random graph $\G$. Recall that $\cG_{n,d}$ denotes the set of all $n$-vertex regular (multi)graphs with degree $d$. 
 \setlist[enumerate,1]{leftmargin=10mm}
 \newcommand{\statelemquiteplanting}{Let $d,q\geq 3$ be integers and $\beta> 0$ be real. Let $f(n),g(n)>0$ be \emph{any} functions with $f(n)=o(1)$. Let $\mathcal{E}\subseteq \cG_{n,d}\times [q]^n$ be an arbitrary event. 
\begin{enumerate}[(1)]
\item $\beta\leq \beta_c$: if $\Pr\big[\big(\hat\G(\sdis),\sdis\big)\in  \cE\big]\leq g(n)$, then, whp over  $\G$, $\Pr\big[(\G,\SIGMA_{\G,\dis}\big)\in\cE\mid \G\big]\leq \frac{g(n)}{f(n)}$.
\item $\beta\geq\beta_c$: if $\Pr\big[\big(\hat\G(\sord),\sord\big)\in  \cE\big]\leq g(n)$, then, whp over  $\G$, $\Pr\big[(\G,\SIGMA_{\G,\ord}\big)\in\cE\mid \G\big]\leq \frac{g(n)}{f(n)}$.
\end{enumerate}}
\begin{lemma}\label{lem:quietplanting}
\statelemquiteplanting
\end{lemma}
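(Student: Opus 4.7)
The natural approach is to exhibit a Radon--Nikodym derivative between the planted pair $(\hat\G(\sdis),\sdis)$ and the ``honest'' pair $(\G,\SIGMA_{\G,\dis})$, and then to transfer the hypothesised tail bound across via Markov's inequality and the concentration of $\Zdis(\G)$ from \Lem~\ref{lem:smallgraph}. Part~(2) should fall out by the same computation with ``$\dis$'' replaced by ``$\ord$'' throughout.

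First, I would unpack the definitions in \eqref{eq:Gsigma} and \eqref{eq:disord}. For $\sigma\in\Sdis$ and $G\in\cG_{n,d}$, multiplying $\Pr[\sdis=\sigma]$ by $\Pr[\hat\G(\sigma)=G]$ and using $\mu_{G,\dis}(\sigma)=\emm^{\beta m_G(\sigma)}/\Zdis(G)$ gives
\[\Pr\big[\hat\G(\sdis)=G,\,\sdis=\sigma\big]\;=\;\frac{\Pr[\G=G]\,\emm^{\beta m_G(\sigma)}}{\Ex[\Zdis(\G)]}\;=\;\frac{\Zdis(G)}{\Ex[\Zdis(\G)]}\cdot\Pr[\G=G]\cdot\mu_{G,\dis}(\sigma).\]
Since $\sdis$ and $\SIGMA_{\G,\dis}$ are both supported on $\Sdis$, one may assume without loss of generality that $\cE\subseteq\cG_{n,d}\times\Sdis$, and summing the displayed identity over $(G,\sigma)\in\cE$ yields the change-of-measure formula
\[\Pr\big[(\hat\G(\sdis),\sdis)\in\cE\big]\;=\;\Ex_{\G}\!\left[\frac{\Zdis(\G)}{\Ex[\Zdis(\G)]}\cdot\Pr\!\big[(\G,\SIGMA_{\G,\dis})\in\cE\,\big|\,\G\big]\right]\;\le\;g(n).\]

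The second step is then a routine Markov-plus-concentration argument. I would set $X:=\Pr[(\G,\SIGMA_{\G,\dis})\in\cE\mid\G]$ and $Y:=\Zdis(\G)/\Ex[\Zdis(\G)]$, so that $\Ex[XY]\le g(n)$. For $\beta<\beta_u'$, invoking \Lem~\ref{lem:smallgraph} with the auxiliary function $\sqrt{f(n)}=o(1)$ gives $Y\ge\sqrt{f(n)}$ whp. A union bound then shows
\[\Pr\!\big[X\ge g(n)/f(n)\big]\;\le\;\Pr\!\big[XY\ge g(n)/\sqrt{f(n)}\big]+\Pr\!\big[Y<\sqrt{f(n)}\big]\;\le\;\sqrt{f(n)}+o(1)\;=\;o(1),\]
where the first term is bounded by Markov's inequality applied to $XY$. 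This establishes part~(1); part~(2) follows by the verbatim argument with $\Sdis,\Zdis,\mu_{G,\dis}$ replaced by $\Sord,\Zord,\mu_{G,\ord}$, and the concentration of $\Zord(\G)$ in \Lem~\ref{lem:smallgraph} valid for $\beta>\beta_u$ supplying the analogue of the tail bound on $Y$.

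I do not anticipate any serious obstacle. The only mildly delicate point is the need to introduce a second $o(1)$ scale (here $\sqrt{f(n)}$) distinct from the $f(n)$ in the conclusion, so that both the Markov bound on $XY$ and the lower tail on $\Zdis/\Ex[\Zdis]$ can be charged to $o(1)$ quantities simultaneously; any intermediate scale between $f(n)$ and $1$ would serve equally well. Once the change-of-measure identity is in hand, the concentration of $\Zdis$ (respectively $\Zord$) around its mean does all the work.
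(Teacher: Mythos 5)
Your proof is correct and, at its core, is the same argument as the paper's: both exploit the identity $\Ex_{\G}\big[\tfrac{\Zdis(\G)}{\Ex[\Zdis(\G)]}\Pr[(\G,\SIGMA_{\G,\dis})\in\cE\mid\G]\big]\le g(n)$ and then combine a Markov-type bound with the lower-tail concentration of $\Zdis(\G)/\Ex[\Zdis(\G)]$ from Lemma~\ref{lem:smallgraph}, using the intermediate scale $\sqrt{f(n)}$. The paper phrases this in terms of aggregate weights over graph classes $\cQ$ and $\cQ'$ rather than isolating the change-of-measure identity and applying Markov to the product $XY$; your presentation is a cleaner abstraction of the identical mechanism, not a different route.
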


The proof is given in Appendix~\ref{sec:prelimsproof}. To utilise Lemma~\ref{lem:quietplanting}, the key observation is that, for the planted models, once we condition on the vertex and edge statistics, the models become uniformly distributed. Formally, for a graph $G\in \cG_{n,d}$ and a configuration $\sigma\in [q]^n$, let $\nu^\sigma$ be vertex colour statistics under $\sigma$, i.e., 
\[\mbox{for a colour $i\in [q]$, $\nu^{\sigma}_i=|\sigma^{-1}(i)|/n$.}\]
Similarly, let $\rho^{G,\sigma}$ be the $[q]\times [q]$ matrix with the following half-edge colour statistics under $\sigma$, i.e., 
\begin{equation}\label{eq:edgestat}
\mbox{for colours $i,j\in [q]$, $\rho^{G,\sigma}_{ij}=\frac{1}{2|E(G)|}\sum_{u,v\in V(G)} \mathbf{1}\big\{\{u,v\}\in E(G),\sigma(u)=i,\sigma(v)=j\big\}$}.
\end{equation}
Recall that $2|E(G)| = d n$.
So, for a colour class $i\in [q]$, out of the $d n \nu_i^\sigma$ half-edges incident to it, there are  $dn\rho^{G,\sigma}_{ij}$ ``going'' to a colour class $j\in [q]$ (note, for $i=j$, there are $\frac{dn}{2}\rho^{G,\sigma}_{ii}$ edges in $G$ that lie inside the colour class $i$). We have that  $\rho^{G,\sigma}$ is symmetric  with $\sum_{i,j}\rho^{G,\sigma}_{ij}=1$. 
Observe further that,  conditioned on $\nu^{\sdis}=\nu$ and $\rho^{\hat\G(\sdis), \sdis}=\rho$ the pair $(\hat\G(\sdis), \sdis)$ is uniformly distributed over the pairs $(G,\sigma)\in \cG_{n,d}\times [q]^n$ with $\nu^{\sigma}=\nu$ and $\rho^{G, \sigma}=\rho$. Similarly for the pair $(\hat\G(\sord), \sord)$ conditioned on  $\nu^{\sord}$ and $\rho^{\hat\G(\sord), \sord}$.

To put this observation into use, we will need more quantitative information on the vertex and edge colour statistics in the planted models. By definition of $\sdis, \sord$, we already  know that $\big\|\nu^{\sdis}-\nudis\big\|_1\leq \theta$ and $\big\|\nu^{\sord}-\nuord\big\|_1\leq \theta$. Analogously, $\rho^{\hat\G(\sdis), \sdis}$ and $\rho^{\hat\G(\sord), \sord}$ are concentrated around the vectors $\rhodis$ and $\rhoord$  where
\begin{equation}\label{eq:rhostatistics}
\mbox{for $i,j\in [q]$}, \quad \rho_{\dis,ij}=\frac{\emm^{\beta\mathbf{1}\{i=j\}}}{\sum_{i',j'\in [q]}\emm^{\beta\mathbf{1}\{i'=j'\}}}, \quad \rho_{\ord,ij}=\frac{\emm^{\beta\mathbf{1}\{i=j\}}(\nu_{\ord,i}\nu_{\ord,j})^{(d-1)/d}}{\sum_{i',j'\in [q]}\emm^{\beta\mathbf{1}\{i'=j'\}}(\nu_{\ord,i'}\nu_{\ord,j'})^{(d-1)/d}}.
\end{equation}
We have the following concentration statement.
\begin{lemma}[\mbox{\cite[Lemmas 3.4 \& 3.5]{coja2023}}]\label{lem:statistics}
Let $d,q\geq 3$ be integers 
 and $\beta> 0$ be real. There exists $c>0$ such that for any  $t=\omega(n^{-1/2})$ it holds that
\begin{enumerate}[(i)]
\item For $\beta< \beta_u'$:  $\Pr\Big[\big\|\nu^{\sdis}-\nudis\big\|_1+\big\|\rho^{\hat\G(\sdis),\sdis}-\rhodis\big\|_1\geq t\Big]\leq \emm^{-c t^2 n}$.
\item For $\beta>\beta_u$:  $\Pr\Big[\big\|\nu^{\sord}-\nuord\big\|_1+\big\|\rho^{\hat\G(\sord),\sord}-\rhoord\big\|_1\geq t \Big]\leq \emm^{-c t^2 n}$.
\end{enumerate}
\end{lemma}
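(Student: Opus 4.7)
The plan is to exploit the observation emphasised in the paragraph before the lemma: conditional on the statistics $(\nu^{\hat\SIGMA},\rho^{\hat\G,\hat\SIGMA})$, the planted pair is uniform over all compatible graph--configuration pairs. This reduces the concentration claim to a single computation --- the probability under each planted measure that $(\nu^{\hat\SIGMA},\rho^{\hat\G,\hat\SIGMA})$ equals a prescribed value $(\nu,\rho)$ --- followed by a local analysis around the maximiser. I would treat both cases in parallel, writing $\bullet\in\{\dis,\ord\}$.

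First, for admissible $(\nu,\rho)$ with $\nu$ in the appropriate $S_\nu$-window, I would express
\begin{equation*}
\Pr\big[(\nu^{\hat\SIGMA},\rho^{\hat\G,\hat\SIGMA})=(\nu,\rho)\big]=\frac{1}{\Ex[Z_\bullet(\G)]}\,N(\nu)\,M(\nu,\rho)\,\emm^{\beta(dn/2)\sum_i\rho_{ii}},
\end{equation*}
where $N(\nu)=\binom{n}{n\nu_1,\ldots,n\nu_q}$ counts the $\sigma$ with colour statistics $\nu$, and $M(\nu,\rho)$ is the number of configuration-model perfect matchings of the $dn$ half-edges whose colour-pair statistics equal $\rho$. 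The factor $M(\nu,\rho)$ decomposes into multinomial coefficients (for partitioning each colour's half-edges among the other colours) together with intra-colour matching counts, all of which admit sharp Stirling approximations.

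Second, combining the Stirling expansions of $N$, $M$ and the linear weight gives
\begin{equation*}
\Pr\big[(\nu^{\hat\SIGMA},\rho^{\hat\G,\hat\SIGMA})=(\nu,\rho)\big]=\exp\!\big(n\,\Phi_\bullet(\nu,\rho)+O(\log n)\big),
\end{equation*}
where $\Phi_\bullet$ is an explicit smooth function on the constraint set $\sum_i\nu_i=1$, $\rho=\rho^{\top}$, $\sum_j\rho_{ij}=\nu_i$, assembled from $-\sum_i\nu_i\ln\nu_i$, the matching entropy governed by $\rho$, the linear term $\beta(d/2)\sum_i\rho_{ii}$, and an $n$-independent normalisation $-\tfrac{1}{n}\ln\Ex[Z_\bullet(\G)]$. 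Lagrange multipliers on the constraint set recover the defining equations \eqref{eq:nuord} and \eqref{eq:rhostatistics}, so $(\nu_\bullet,\rho_\bullet)$ is a stationary point, and by the discussion after \eqref{eq:nuord} it is the relevant local maximiser inside the $S_\nu$-window.

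Third, I would Taylor-expand $\Phi_\bullet$ around $(\nu_\bullet,\rho_\bullet)$ and argue that its Hessian is negative definite on the constraint tangent space, giving
\begin{equation*}
\Phi_\bullet(\nu,\rho)-\Phi_\bullet(\nu_\bullet,\rho_\bullet)\leq -c\bigl(\|\nu-\nu_\bullet\|_1^2+\|\rho-\rho_\bullet\|_1^2\bigr)
\end{equation*}
uniformly in a neighbourhood of the maximiser. Summing this Gaussian bound over the $\mathrm{poly}(n)$ lattice values of $(\nu,\rho)$ at $\ell_1$-distance at least $t$ yields the claimed $\emm^{-ct^2n}$ bound once $t=\omega(n^{-1/2})$ dominates the $O(\log n)$ Stirling error. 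The main obstacle is the Hessian computation on the constraint subspace: for $(\nudis,\rhodis)$, the $S_q$-symmetry diagonalises the Hessian cleanly, but for the ordered case $\nuord$ is asymmetric and $\beta$-dependent, and the condition $\beta>\beta_u$ of the lemma is precisely what makes the ordered stationary point a genuine local maximum (it degenerates at $\beta=\beta_u$, consistent with the footnote after \eqref{eq:nuord}), so one has to use the specific form of the equation for $a$ in \eqref{eq:nuord} to verify strict concavity in the asymmetric directions.
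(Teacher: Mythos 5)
This lemma is not proved in the paper—it is cited from \cite[Lemmas~3.4 \& 3.5]{coja2023}—so there is no in-paper proof to compare against. Your reconstruction (exact combinatorial formula for $\Pr[(\nu^{\hat\SIGMA},\rho^{\hat\G,\hat\SIGMA})=(\nu,\rho)]$, Stirling asymptotics producing $\exp(n\Phi_\bullet+O(\log n))$, identification of $(\nu_\bullet,\rho_\bullet)$ as a stationary point via Lagrange multipliers, negative-definite Hessian on the constraint manifold, and a shell-wise lattice sum) is the standard Laplace-method argument for concentration of empirical statistics in planted configuration models and is almost certainly the route taken in the cited reference; indeed the $N(\nu)\,M(\nu,\rho)$ decomposition you propose is exactly the factorisation $\mathcal{N}$ that this paper itself uses in the computation leading to \eqref{eq:ordered-bound-term-1}.

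Two details deserve more care than your sketch gives them. First, your closed form omits the total matching count $(dn-1)!!$ in the denominator, which is absorbable but should be tracked; more importantly, the statement that summing the Gaussian tail over the lattice ``yields the claimed $\emm^{-ct^2n}$ bound once $t=\omega(n^{-1/2})$ dominates the $O(\log n)$ Stirling error'' is not right as phrased, since $t^2n=\omega(1)$ need not beat $\log n$. What actually rescues the bound is that the $n^{O(1)}$ prefactor from Stirling and from counting lattice points cancels against the identical prefactor in the partition-function normalisation, leaving a residual factor of order $(t\sqrt n)^{O(1)}$ rather than $n^{O(1)}$; this is absorbed into a slightly smaller $c$ precisely because $\log s=o(s^2)$ as $s=t\sqrt n\to\infty$. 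If you do not make this cancellation explicit, the proof appears to fail for $t$ between $n^{-1/2}$ and $\sqrt{\log n/n}$. Second, the Hessian computation for the ordered case is where essentially all the technical work lives: your assertion that $\beta>\beta_u$ makes the stationary point nondegenerate is the correct guiding idea, but verifying strict negativity of the Hessian on the tangent space of the constraints ($\rho=\rho^\top$, $\sum_j\rho_{ij}=\nu_i$, $\sum_i\nu_i=1$) in the asymmetric directions requires an explicit eigenvalue/stability analysis using the equation for $a$ in \eqref{eq:nuord}; pointing at it is not the same as carrying it out, and you would need to import or rederive that computation to have a complete proof.
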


\section{Mixing within the disordered and ordered phases for the RC model}\label{sec:RCmixing}

\subsection{Phases for the random cluster model via Potts}
For a graph $G\in \cG_{n,d}$ and $\sigma\in [q]^n$, let $\F(G,\sigma)$ denote the random subset of edges obtained by keeping each monochromatic edge of $G$ under $\sigma$ with probability $p$. It is a standard fact (see, e.g., \cite{RCMbook}) that, for integer $q\geq 3$, we can obtain a sample $\F\sim \pi_{G}$  by first sampling  $\SIGMA\sim \mu_{G}$ and then keeping each monochromatic edge of $G$ under $\sigma$ with probability $p=1-\emm^{-\beta}$ (the so-called percolation step).

We use this for the random graph $\G$ to get a handle on the random cluster model on $\G$, i.e., $\F(\G,\SIGMA)$   has distribution $\pi_{\G}$. From Lemma~\ref{lem:statistics}, we can figure out in particular how many monochromatic edges we should expect from each phase, so we can translate the phases for the Potts model to the random cluster model.  More precisely, set 
\[\mdis(\beta)=p\frac{d}{2}\sum_{i\in [q]}\rho_{\dis,ii} \mbox{ and } \mord(\beta)=p\frac{d}{2}\sum_{i\in [q]}\rho_{\ord,ii}.\]
With a bit of algebra, we can derive the following expressions for $\mdis$ and $\mord$  from \eqref{eq:rhostatistics}: for $\beta<\beta_u'$, we have $\mdis(\beta)=\frac{d}{2}\frac{\emm^{\beta}-1}{\emm^{\beta}+q-1}$, while, for $\beta>\beta_u$, we have $\mord(\beta)=\frac{d}{2}\frac{(\emm^{\beta}-1)(x^2+\frac{(1-x)^2}{q-1})}{1+(\emm^{\beta}-1)(x^2+\frac{(1-x)^2}{q-1})}$ with $x=\frac{t^{d-1}}{t^{d-1}+q-1}$ and $t>1$ defined  from $\emm^{\beta}-1=\frac{(t-1)(t^{d-1}+q-1)}{t^{d-1}-t}$, as in \eqref{eq:nuord}.

We are now ready to define the ordered/disordered phases for the random-cluster model. Due to monotonicity properties (and in contrast to the phases of the Potts model), we can just use the values of $\mdis$ and $\mord$ at the critical temperature $\beta=\beta_c$; this will be convenient later on.

\begin{definition}[The ordered/disordered phases]\label{def:RCorddis}
Let $q,d\geq 3$ be integers. For a graph $G\in \cG_{n,d}$ and $\rho:=\mord(\beta_c)-\mdis(\beta_c)>0$, define 
\begin{equation*}
\mbox{$\Omega_{G,\dis}=\{F\subseteq E(G): |F|\leq n\mdis(\beta_c)+n\rho/4\}$ and $\Omega_{G,\ord}=\{F\subseteq E(G): |F|\geq n\mord(\beta_c)-n\rho/4\}$.}
\end{equation*}
Let also $\pi_{G,\dis}=\pi_G(\cdot \mid \Omega_{G,\dis})$ and $\pi_{G,\ord}=\pi_G(\cdot \mid \Omega_{G,\ord})$. 
\end{definition}
Using the translation between the Potts and random-cluster models, we have  the following. 
\begin{corollary}\label{cor:smallgraphb}
Let $d,q\geq 3$ be integers and $\beta>0$ be real.  Then, whp over $G\sim \cG_{n,d}$, we have $\pi_{G}\big(\Omega_{G,\dis}\big)=1-\emm^{-\Omega(n)}$ when $\beta<\beta_c$ and $\pi_{G}\big(\Omega_{G,\ord}\big)=1-\emm^{-\Omega(n)}$ when $\beta>\beta_c$. 

For $\beta=\beta_c$, $\pi_{G}\big(\Omega_{G,\dis}\cup \Omega_{G,\ord}\big)=1-\emm^{-\Omega(n)}$ and $\pi_{G}\big(\Omega_{G,\dis}\big),\pi_{G}\big(\Omega_{G,\ord}\big)\geq 1/\ln \ln n$.
\end{corollary}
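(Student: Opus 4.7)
The plan is to combine the Edwards--Sokal coupling between the Potts and random-cluster distributions with the phase-concentration input from Lemma~\ref{lem:smallgraph}. Recall that one may sample $\F\sim\pi_G$ by first drawing $\SIGMA\sim\mu_G$ and then independently retaining each monochromatic edge of $G$ under $\SIGMA$ with probability $p$. Thus, controlling $\pi_G(\Omegadis)$ and $\pi_G(\Omegaord)$ reduces to (i) conditioning on which Potts-phase $\SIGMA$ lies in (via Lemma~\ref{lem:smallgraph}) and (ii) showing that, after the percolation step, $|\F|$ concentrates exponentially around $n\mdis(\beta)$ or $n\mord(\beta)$.

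For the subcritical regime $\beta<\beta_c$, Lemma~\ref{lem:smallgraph} gives $\mu_{\G}(\Sdis)=1-\emm^{-\Omega(n)}$ whp, so it suffices to work under $\mu_{\G,\dis}$. Set $\delta:=\mdis(\beta_c)-\mdis(\beta)$, which is strictly positive by monotonicity of $\mdis$ in $\beta$ (immediate from the closed form $\mdis(\beta)=\tfrac{d}{2}(\emm^\beta-1)/(\emm^\beta+q-1)$). Pick a small constant $t=t(\beta,\delta)$, apply Lemma~\ref{lem:statistics} to bound $\|\rho^{\hat\G(\sdis),\sdis}-\rhodis\|_1\leq t$ with probability $1-\emm^{-\Omega(n)}$ in the planted model, and transfer via Lemma~\ref{lem:quietplanting} (with, say, $f(n)=1/\ln n$) to conclude $\|\rho^{\G,\SIGMA}-\rhodis\|_1\leq t$ with probability $1-\emm^{-\Omega(n)}$, whp over $\G$ under $\mu_{\G,\dis}$. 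This forces the monochromatic edge count under $\SIGMA$ to lie within $tdn/2$ of $\tfrac{dn}{2}\sum_i\rho_{\dis,ii}$, and a Chernoff bound on the ensuing $\mathrm{Bin}(\cdot,p)$ retention step yields $|\F|\leq n\mdis(\beta)+n\delta/2$ with probability $1-\emm^{-\Omega(n)}$; since $n\mdis(\beta)+n\delta/2<n\mdis(\beta_c)\leq n\mdis(\beta_c)+n\rho/4$, this places $\F\in\Omegadis$.

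The supercritical case $\beta>\beta_c$ is entirely symmetric: replace $\Sdis$ by $\tSord$ and run the argument on each of its $q$ symmetric components separately (using the analogous concentration of $\rho^{\G,\SIGMA}$ around $\rhoord$ via Lemmas~\ref{lem:statistics} and~\ref{lem:quietplanting}), invoking monotonicity of $\mord$ in $\beta$ to reach $\F\in\Omegaord$ whp. At the critical temperature, Lemma~\ref{lem:smallgraph} gives $\mu_{\G}(\Sdis\cup\tSord)=1-\emm^{-\Omega(n)}$, so the same concentration arguments applied to each phase produce $\pi_{\G}(\Omegadis\cup\Omegaord)=1-\emm^{-\Omega(n)}$. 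Additionally, applying Lemma~\ref{lem:smallgraph} with $f(n)=2/\ln\ln n$ gives $\mu_{\G}(\Sdis),\mu_{\G}(\Sord)\geq 2/\ln\ln n$, and since the percolation step only leaks $\emm^{-\Omega(n)}=o(1/\ln\ln n)$ mass out of each phase, we recover $\pi_{\G}(\Omegadis),\pi_{\G}(\Omegaord)\geq 1/\ln\ln n$.

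The only delicate point is the monotonicity of $\mord$ in $\beta$, which is needed to secure a nontrivial gap between $\mord(\beta)$ and $\mord(\beta_c)$ for $\beta>\beta_c$ and hence to absorb the $o(n)$ concentration error into the $n\rho/4$ slack. This follows from the standard fact that the ``ordered'' root $a(\beta)$ of~\eqref{eq:nuord} is strictly increasing on $(\beta_u,\infty)$---i.e., the ferromagnetic mean-field magnetisation on the $d$-regular tree is increasing in inverse temperature---together with a direct calculation from the closed-form expression for $\mord(\beta)$ recorded in Section~\ref{sec:RCmixing}.
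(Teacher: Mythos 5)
Your proposal is correct and tracks the paper's approach closely: the paper's one-line proof of the corollary invokes Lemma~\ref{lem:smallgraph} and the Edwards--Sokal correspondence, and silently delegates the percolation-step concentration to Lemma~\ref{lem:percstep} in Appendix~\ref{sec:f4ttvrv}, whose content you are effectively re-deriving inline (planted-model edge statistics via Lemma~\ref{lem:statistics}, transfer via Lemma~\ref{lem:quietplanting}, Chernoff on the $\mathrm{Bin}(\cdot,p)$ retention).

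One small thing worth flagging: the auxiliary gap $\delta := \mdis(\beta_c)-\mdis(\beta)$ is unnecessary and slightly obscures the $\beta=\beta_c$ case, where $\delta=0$ and the argument as literally stated gives no slack. The correct slack is always the fixed constant $\rho/4$ built into the definition of $\Omega_{G,\dis}$: since Lemma~\ref{lem:statistics} concentrates the edge statistics to within $o(n)$, after percolation $|\F| = n\mdis(\beta) + o(n) \leq n\mdis(\beta_c)+n\rho/4$ for all $\beta\leq\beta_c$ simultaneously, including $\beta_c$ itself, with no need for strict monotonicity in a punctured neighbourhood. This is exactly what the paper's Lemma~\ref{lem:percstep} does. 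Your appeal to monotonicity of $\mdis$ and $\mord$ in $\beta$ is still needed (to compare $\mdis(\beta)$ with $\mdis(\beta_c)$ and $\mord(\beta)$ with $\mord(\beta_c)$), and the paper uses the same fact implicitly (``Since $\mdis(\beta)\leq\mdis(\beta_c)$'' in the proof of Lemma~\ref{lem:percstep}), so your discussion of this point is reasonable. Otherwise the decomposition matches the paper's.
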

\begin{proof}[Proof of Corollary~\ref{cor:smallgraphb}]
This follows from Lemma~\ref{lem:smallgraph}, and the correspondence between $\SIGMA\sim \mu_G$ and $\F\sim \pi_G$ stated at the start of the section, using $f(n)=1/\ln \ln n$ for the lower bounds at $\beta=\beta_c$.
\end{proof}

To analyse the ordered/disordered  phases of the random cluster model using the planted distributions, we will need the following intermediate distributions. Namely, for a graph $G$, let $\hat\pi_{G,\dis}$ be the distribution on $F\subseteq E(G)$ obtained by first sampling $\SIGMA_{G,\dis}\sim \mu_{G,\dis}$ and then keeping each monochromatic edge with probability $p=1-\emm^{-\beta}$. Define similarly $\hat\pi_{G,\ord}$.  The proof of the following is given in Appendix~\ref{sec:f4ttvrv}.
\newcommand{\statelemPottstoRC}{Let $d,q\geq 3$ be integers and $\beta>0$ be real. Then,  whp over $G\sim\cG_{n,d}$, for any event $\cA\subseteq 2^{E(G)}$, the following holds.
\begin{enumerate}[(i)]
\item if $\beta\leq \beta_c$, then $\pi_{G,\dis}(\cA)\leq \hat{\pi}_{G,\dis}(\cA)+\emm^{-\Omega(n)}$.  
\item if  $\beta\geq \beta_c$, then $\pi_{G,\ord}(\cA)\leq \hat{\pi}_{G,\ord}(\cA)+\emm^{-\Omega(n)}$.  
\end{enumerate}}
\begin{lemma}\label{lem:PottstoRC}
\statelemPottstoRC
\end{lemma}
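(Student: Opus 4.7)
The plan is to exploit the Edwards--Sokal coupling between $\mu_G$ and $\pi_G$. Let $(\SIGMA,\F)$ be a joint sample with $\SIGMA\sim\mu_G$ and, conditional on $\SIGMA$, $\F$ obtained by independently retaining each monochromatic edge of $G$ under $\SIGMA$ with probability $p=1-\emm^{-\beta}$. The marginal of $\F$ is $\pi_G$, so $\pi_{G,\dis}(\cA)=\Pr[\F\in\cA\mid\F\in\Omega_{G,\dis}]$, while by definition $\hat\pi_{G,\dis}(\cA)=\Pr[\F\in\cA\mid\SIGMA\in\Sdis]$. The starting point is the decomposition
\[
\Pr[\F\in\cA,\F\in\Omega_{G,\dis}] \leq \Pr[\F\in\cA,\SIGMA\in\Sdis]+\Pr[\SIGMA\notin\Sdis,\F\in\Omega_{G,\dis}],
\]
which, upon dividing by $\pi_G(\Omega_{G,\dis})$, gives
\[
\pi_{G,\dis}(\cA)\leq \hat\pi_{G,\dis}(\cA)\cdot\frac{\mu_G(\Sdis)}{\pi_G(\Omega_{G,\dis})}+\frac{\Pr[\SIGMA\notin\Sdis,\F\in\Omega_{G,\dis}]}{\pi_G(\Omega_{G,\dis})}.
\]

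It then suffices to establish two concentration claims, each holding whp over $G\sim\cG_{n,d}$: \textbf{(C1)} $\Pr[\SIGMA\in\Sdis,\F\notin\Omega_{G,\dis}]=\emm^{-\Omega(n)}$, and \textbf{(C2)} $\Pr[\SIGMA\notin\Sdis,\F\in\Omega_{G,\dis}]=\emm^{-\Omega(n)}$. Claim (C1) implies $\pi_G(\Omega_{G,\dis})\geq \mu_G(\Sdis)-\emm^{-\Omega(n)}$, so the first factor above is at most $1+\emm^{-\Omega(n)}$ provided $\pi_G(\Omega_{G,\dis})$ is at least inverse-polylogarithmic, which is guaranteed by Corollary~\ref{cor:smallgraphb} (lower bound $1/\ln\ln n$ at $\beta=\beta_c$, and $1-\emm^{-\Omega(n)}$ for $\beta<\beta_c$). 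Combined with (C2) and the same lower bound on $\pi_G(\Omega_{G,\dis})$, the second term is also $\emm^{-\Omega(n)}$, yielding part (i).

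For (C1), condition on $\SIGMA\in\Sdis$. By Lemma~\ref{lem:statistics} pushed to the $G$-conditional distribution via Lemma~\ref{lem:quietplanting} (with auxiliary $f(n)=1/n$), the half-edge statistic $\rho^{G,\SIGMA}$ lies within $o(1)$ of $\rho_\dis$ whp, so $m(\SIGMA)$ is within $o(n)$ of $(dn/2)\sum_i\rho_{\dis,ii}=n\mdis(\beta)/p$. Then $|\F|\sim\mathrm{Bin}(m(\SIGMA),p)$ concentrates around $n\mdis(\beta)\leq n\mdis(\beta_c)$ by Chernoff, strictly inside $\Omega_{G,\dis}$. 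For (C2), when $\beta<\beta_c$ Lemma~\ref{lem:smallgraph} gives $\mu_G(\Sdis)=1-\emm^{-\Omega(n)}$ and the claim is immediate. For $\beta=\beta_c$, split $\SIGMA\notin\Sdis$ into $\SIGMA\in\tSord$ plus an $\emm^{-\Omega(n)}$-mass remainder (Lemma~\ref{lem:smallgraph}); by colour-permutation symmetry, which preserves both the law of $\SIGMA\mid\SIGMA\in\tSord$ and the conditional percolation law of $\F$, it suffices to handle $\SIGMA\in\Sord$, where the analogous planted-statistics argument yields $p\cdot m(\SIGMA)\approx n\mord(\beta_c)$, exceeding $n\mdis(\beta_c)+n\rho/4$ by at least $3n\rho/4>0$, and Chernoff closes (C2). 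Part (ii) is entirely analogous: write $\hat\pi_{G,\ord}(\cA)=\Pr[\F\in\cA\mid\SIGMA\in\tSord]$ by the same symmetry, and note that disordered configurations produce $p\cdot m(\SIGMA)\approx n\mdis(\beta_c)$, below the ordered threshold by the same $3n\rho/4$ margin.

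The main obstacle is coordinating the planted concentration bounds of Lemma~\ref{lem:statistics} with the ``whp over $G$'' guarantees needed for the conditional distribution $\mu_G$. The pullback through Lemma~\ref{lem:quietplanting} costs a factor $1/f(n)$ for any $f(n)=o(1)$; choosing $f(n)=1/n$ converts planted $\emm^{-\Omega(n)}$ bounds into conditional $\emm^{-\Omega(n)}$ bounds, so all exceptional probabilities remain exponentially small and the additive $\emm^{-\Omega(n)}$ error in the statement is preserved.
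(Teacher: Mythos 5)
Your proof is correct and follows the same underlying route as the paper: exploit the Edwards--Sokal coupling, decompose the joint event, and push through concentration of the monochromatic-edge count under the planted statistics. Where the paper proves the percolation concentration in a standalone Lemma~\ref{lem:percstep} and chains inequalities (equations~\eqref{eq:perct5tgtg}--\eqref{eq:perc123}), you package the same content as two inline claims (C1), (C2) and divide through by $\pi_G(\Omega_{G,\dis})$ directly; the algebra is equivalent. One point where your write-up is in fact slightly sharper than the paper's: at $\beta=\beta_c$ you split the complement of $\Sdis$ into $\tSord$ plus an exponentially small remainder and invoke colour-permutation symmetry, which is the right move since $\mu_G(\tSord\setminus\Sord)$ need not be exponentially small at criticality. (The paper's displayed decomposition uses $\Sord$ rather than $\tSord$ in the residual term $\Pr[\SIGMA\notin\Sdis\cup\Sord]$, which is a slip at $\beta=\beta_c$ that is repaired exactly by your symmetrisation step.) For part (ii), the description is a bit terse but the analogous argument clearly works: for $\beta>\beta_c$ the bound $\mu_G(\Sdis)=\emm^{-\Omega(n)}$ already kills the cross term without any percolation estimate, and at $\beta=\beta_c$ the $3\rho/4$ margin you compute closes it.
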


\subsection{Mixing and Weak Spatial Mixing (WSM) within a phase}
Treading a path initiated in \cite{SinclairsGheissari2022}, to prove Theorems~\ref{thm:main1} and Theorem~\ref{thm:main2} it suffices to show the following results for the disordered and ordered regimes, respectively.

\newcommand{\statethmRCdis}{Let $d,q\geq 3$ be integers and $\beta\in (0,\beta_c]$. Then, whp over $G\sim\cG_{n,d}$, for every $\eps\geq \emm^{-\Omega(n)}$ the RC dynamics $(X_t)_{t\geq 0}$ initialised from all-out satisfies $\big\|X_T-\pi_{G,\dis}\big\|_{\mathrm{TV}}\leq \eps$ for $T=O(n\log n\log\tfrac{1}{\eps})$.}
\begin{theorem}\label{thm:RCdis}
\statethmRCdis
\end{theorem}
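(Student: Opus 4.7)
The plan is to follow the framework initiated in \cite{SinclairsGheissari2022} and further developed in \cite{BG}, where fast mixing of random-cluster dynamics is reduced to establishing weak spatial mixing (WSM) within a phase. Since the all-out initialisation $X_0 = \emptyset$ lies in $\Omega_{G,\dis}$ and, by Corollary~\ref{cor:smallgraphb}, the set $\Omega_{G,\dis}$ carries $\pi_G$-mass $1 - \emm^{-\Omega(n)}$ for $\beta \leq \beta_c$, a standard bottleneck argument (using that configurations on the boundary of $\Omega_{G,\dis}$ carry exponentially small $\pi_G$-mass) reduces the theorem to showing that the RC dynamics restricted to $\Omega_{G,\dis}$ mixes in $O(n \log n \log(1/\eps))$ steps; the $\emm^{-\Omega(n)}$ TV error between the full and restricted chains is absorbed by the hypothesis $\eps \geq \emm^{-\Omega(n)}$.

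For the restricted chain, I would invoke a local-to-global principle: it suffices to establish WSM for $\pi_{G,\dis}$ at logarithmic scales, i.e., that for typical $G \sim \cG_{n,d}$ the distribution of the status of a given edge $e$ under $\pi_{G,\dis}$ is nearly insensitive to boundary conditions imposed at graph-distance $\Omega(\log n)$ from $e$. Combined with the local tree-like structure of $\G$ and standard block-dynamics/path-coupling machinery, WSM at this scale yields the desired $O(n \log n)$ mixing.

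This is where planting enters. By Lemma~\ref{lem:PottstoRC}, it suffices to analyse the surrogate distribution $\hat\pi_{G,\dis}$, obtained by sampling $\SIGMA \sim \mu_{G,\dis}$ and then percolating monochromatic edges with probability $p = 1 - \emm^{-\beta}$. Lemma~\ref{lem:quietplanting} transfers any high-probability statement about the planted pair $(\hat\G(\sdis), \sdis)$ to a high-probability statement about $(\G, \SIGMA_{\G,\dis})$, at only the mild $1/f(n)$ loss. After conditioning on $\nu^{\sdis} \approx \nudis$ and $\rho^{\hat\G(\sdis),\sdis} \approx \rhodis$, which concentrates sharply by Lemma~\ref{lem:statistics}, the planted pair is uniformly distributed over pairs $(G,\sigma)$ with the prescribed statistics; in particular, the percolated monochromatic subgraph of each colour class is (in law) an Erd\H{o}s--R\'enyi-type object with controlled parameters, determined entirely by $\rhodis$ and $p$.

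The key quantitative step, Lemma~\ref{lem:dispath}, is to show that this planted monochromatic percolation structure is \emph{subcritical} for every $\beta < \beta_c$: the expected number of self-avoiding monochromatic paths of length $\ell$ rooted at any vertex decays geometrically in $\ell$, with a rate bounded away from $1$. Together with the tight two-path correlation-decay estimate of Lemma~\ref{lem:2l-decay-bound-for-twopath}, a union bound over paths of length $\Theta(\log n)$ starting at a given vertex shows that connection probabilities under $\hat\pi_{G,\dis}$ decay exponentially, whence a disagreement-percolation coupling delivers the required WSM for $\pi_{G,\dis}$. The main obstacle is pushing subcriticality \emph{all the way to} $\beta_c$ rather than only to $\beta_u$: above $\beta_u$ the ordered mode exists as a competing local maximum and spoils direct correlation-decay analyses of $\pi_G$, but planting lets us work inside the disordered component of the partition function, where the effective branching rate of the monochromatic percolation stays strictly below $1$ up to $\beta_c$; verifying this threshold alignment, and extending the two-path estimate to the boundary case $\beta = \beta_c$, is the central new calculation.
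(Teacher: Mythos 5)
Your proposal conflates the proof of Theorem~\ref{thm:RCdis} with the proof of its main input, Theorem~\ref{thm:wsm-disordered}. Most of your text (planting via Lemmas~\ref{lem:quietplanting}, \ref{lem:statistics}, \ref{lem:PottstoRC}, subcriticality via Lemma~\ref{lem:dispath}, the two-path estimate of Lemma~\ref{lem:2l-decay-bound-for-twopath}) is the WSM argument, which the paper packages as a separate theorem and takes as given here. What Theorem~\ref{thm:RCdis} actually needs is the WSM-to-mixing conversion, and this is exactly the piece you wave away by citing ``standard block-dynamics/path-coupling machinery'' and ``a disagreement-percolation coupling''. Neither is the mechanism the paper uses. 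The paper's argument couples, by monotonicity, the all-out chain $X_t$ with a chain $\hat X_t$ started from $\pi_{G,\dis}$ that ignores updates leaving $\Omega_{G,\dis}$, so that $X_t\subseteq\hat X_t$ as long as no update has been ignored; it then introduces, per edge $e$ incident to $v$, a third monotone chain $X_t^v$ that only performs updates inside $B_\ell(v)$, sandwiches $X_t^v\subseteq X_t\subseteq\hat X_t$, and splits the per-edge disagreement into two terms: $|\Pr(e\in X_T^v)-\pi^-_{G,v,\ell}(1_e)|$ (controlled by log-Sobolev bounds for the local chain on the treelike ball, imported from \cite{treeRC}) and $|\pi^-_{G,v,\ell}(1_e)-\pi_{G,\dis}(1_e)|$ (controlled by WSM). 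Path coupling/block dynamics do not appear.

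Two further concrete gaps. First, your opening reduction asserts that $\Omega_{G,\dis}$ has $\pi_G$-mass $1-\emm^{-\Omega(n)}$ ``for $\beta\le\beta_c$''; at $\beta=\beta_c$ Corollary~\ref{cor:smallgraphb} only gives mass at least $1/\ln\ln n$, so that claim is false at the endpoint. Fortunately that mass is irrelevant, because Theorem~\ref{thm:RCdis} compares $X_T$ to $\pi_{G,\dis}$ directly; what is needed is that the boundary of $\Omega_{G,\dis}$ has small $\pi_{G,\dis}$-mass, i.e.\ Corollary~\ref{cor:boundary-of-phases-are-unlikely}, which is what lets one control the probability that $\hat X_t$ ever ignores an update. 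Second, you do not address how to obtain the $\log(1/\eps)$ factor for $\eps$ down to $\emm^{-\Omega(n)}$; the naive amplification of a $1/4$-TV bound does not commute cleanly with the conditioning on the event $\cE_{<T}$ that no update has been ignored, and the paper gives a modified inductive argument precisely because the standard one does not apply directly. Saying the error ``is absorbed by the hypothesis $\eps\geq\emm^{-\Omega(n)}$'' does not supply this step.
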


\newcommand{\statethmRCord}{Let $d\geq 3,q\geq (5d)^5$ be integers and $\beta\geq \beta_c$ be a real. Then, whp over $G\sim\cG_{n,d}$, for every $\eps\geq \emm^{-\Omega(n)}$ the RC dynamics $(X_t)_{t\geq 0}$ initialised from all-in satisfies $\big\|X_T-\pi_{G,\ord}\big\|_{\mathrm{TV}}\leq \eps$ for $T=O(n\log n\log\tfrac{1}{\eps})$.}
\begin{theorem}\label{thm:RCord}
\statethmRCord
\end{theorem}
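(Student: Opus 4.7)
\textbf{Proof proposal for Theorem~\ref{thm:RCord}.}
Following the template initiated in \cite{SinclairsGheissari2022} and adapted in \cite{RCrandom}, the plan is to reduce fast mixing of the RC dynamics from the all-in initialisation to a weak-spatial-mixing (WSM) property of the conditional measure $\pi_{G,\ord}$, which roughly says that in a typical sample drawn from $\pi_{G,\ord}$ the local influence of boundary conditions decays rapidly. Once WSM within the ordered phase is established, the bound $T=O(n\log n\log(1/\eps))$ follows from the generic WSM-to-mixing machinery for local-update dynamics restricted to a single phase, with the all-in initial state being the natural ordered starting configuration (analogously to how Theorem~\ref{thm:RCdis} uses all-out for the disordered phase). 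The genuinely new input is the structural ``wired shell'' property: in a typical sample from $\pi_{G,\ord}$, every vertex $v$ has, at some distance $\ell<\tfrac12\log_{d-1}n$, a complete sphere of vertices contained in the giant percolation component, so that boundary effects propagate only through wired interfaces.

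The plan is to establish the wired-shell property via planting. By Lemma~\ref{lem:PottstoRC}(ii) it suffices to prove the statement under the measure $\hat\pi_{G,\ord}$, namely draw $\SIGMA\sim\mu_{G,\ord}$ and percolate monochromatic edges with probability $p=1-\emm^{-\beta}$. By Lemma~\ref{lem:quietplanting}(2), an event of probability $o(1)$ under the planted pair $(\hat\G(\sord),\sord)$ has probability $o(1)$ under $(\G,\SIGMA_{\G,\ord})$ whp. Conditional on the vertex and half-edge statistics $(\nu^{\sord},\rho^{\hat\G(\sord),\sord})$ the pair is uniform on graphs with those statistics, and Lemma~\ref{lem:statistics} concentrates the statistics at $(\nuord,\rhoord)$. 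The percolated subgraph restricted to the dominant colour class then behaves like a configuration-model random graph on roughly $an$ vertices of effective degree proportional to $d\,p\,\rho_{\ord,11}/\nu_{\ord,1}$; for $\beta\geq\beta_c$ this quantity exceeds $1$, so the percolation is strictly supercritical and produces a linear-sized giant.

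The main quantitative step is the path-avoidance estimate: for a fixed vertex $v$, I would bound the probability that there exists a self-avoiding path $v=v_0,v_1,\ldots,v_\ell$ in $\hat\G(\sord)$ no vertex of which lies in the giant. Using the uniform description on fixed statistics together with standard switching/exploration arguments for the configuration model, the event can be decomposed edge by edge and upper bounded by $\alpha^\ell$ for an explicit $\alpha$. The key input is that a vertex fails to lie in the giant only if it is either not dominant-coloured (contributing $\approx 1-a = O(1/q)$) or, being dominant-coloured, its component in the dominant-colour subgraph is small (which is exponentially unlikely in the supercritical regime and contributes a further factor tending to $0$). Plugging in $q\geq(5d)^5$ makes $\alpha\leq (d-1)^{-3/2}$, say, so the union bound over the at most $d(d-1)^{\ell-1}$ paths of length $\ell=\lceil \tfrac13\log_{d-1}n\rceil$ out of $v$ gives $o(1/n)$, and a further union bound over $v$ gives the wired-shell property whp.

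The main obstacle is making the path-avoidance bound sharp enough to survive the $d(d-1)^{\ell-1}$ paths, since graph and colouring are correlated through the planting; the threshold $q\geq(5d)^5$ is precisely what is needed so that the ``not dominant-coloured'' factor $\asymp 1/q$ absorbs a constant power of $d$ (arising both from the branching and from the estimate that a dominant-coloured vertex lies in a small component) with room to spare. Once the wired-shell lemma is in place, the conclusion of Theorem~\ref{thm:RCord} follows by feeding it into the WSM-to-mixing implication developed in the paper, exactly parallel to the disordered case: the all-in initialisation stays inside $\Omega_{G,\ord}$ along the dynamics with overwhelming probability, and WSM gives optimal mixing within this phase.
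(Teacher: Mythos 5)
Your high-level plan is right and matches the paper's approach: reduce to WSM within the ordered phase via the wired-boundary/wired-shell mechanism, establish the key path-avoidance estimate under the ordered planting $(\hat\G(\sord),\sord)$ using Lemmas~\ref{lem:quietplanting},~\ref{lem:PottstoRC},~\ref{lem:statistics}, and then feed WSM into the standard mixing-time machinery with the all-in start. However, your quantitative bookkeeping does not close. You claim $\alpha\leq (d-1)^{-3/2}$ suffices with $\ell=\lceil\tfrac13\log_{d-1}n\rceil$, but then $d(d-1)^{\ell-1}\alpha^\ell\approx(d-1)^{-\ell/2}\approx n^{-1/6}$, which is not $o(1/n)$; a further union bound over the $n$ vertices gives $n^{5/6}$, which is not $o(1)$. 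The WSM requirement $\leq \tfrac{1}{100m}=O(1/n)$, together with the fact that the ball must stay tree-like (forcing $\ell<\tfrac12\log_{d-1}n$), dictates that the per-path factor must satisfy $A<(d-1)^{-5}$: then $n\cdot d(d-1)^{\ell-1}A^\ell\approx n\cdot(d-1)^{-4\ell}\approx n^{-1}$ with $\ell\approx(\tfrac12-\delta)\log_{d-1}n$. This is exactly what Lemma~\ref{lem:ordered-less-than-(d-1)^-3} establishes and what forces the threshold $q\geq(5d)^5$. Your exponent $3/2$ is off by a factor of more than $3$, and with your shorter $\ell$ the requirement is even more stringent (roughly $A<(d-1)^{-7}$), so the bound as stated would not carry through.

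A secondary issue: you describe a dominant-coloured vertex failing to lie in the giant as ``exponentially unlikely in the supercritical regime.'' This is not right. It is the extinction probability of the associated Galton--Watson process (the quantity $\hat\varphi_1$ in the paper), which is a \emph{constant} strictly between $0$ and $1$; it is only small because $q$ is large, roughly $\hat\varphi_1 = O\big((q-1)^{-2(d-2)/d}\big)$. The proof needs both contributions --- the ``wrong colour'' term $R=1-\rho_{\ord,i1}/\nu_{\ord,i}$ and the extinction term $\sqrt{\hat\varphi_1}$ --- to each be below $\tfrac{1}{2(d-1)^5}$, and the paper arranges this via a somewhat delicate eigenvalue bound on the transfer matrix of the path, not just a naive product of single-vertex failure probabilities. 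You would need to reproduce both that matrix argument in Lemma~\ref{lem:ordpath} and the explicit estimates in Lemma~\ref{lem:ordered-less-than-(d-1)^-3} to get the exponent $5$ that the union bound actually requires.
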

\begin{proof}[Proof of Theorems~\ref{thm:main1} and~\ref{thm:main2}]
We first do the proof of Theorem~\ref{thm:main1}. By Corollary~\ref{cor:smallgraphb}, it holds that $\big\|\pi_G-\pi_{G,\dis}\big\|_{\mathrm{TV}}=\emm^{-\Omega(n)}$ and by Theorem~\ref{thm:RCdis} we can obtain a sample from $\pi_{G,\dis}$ in $T$ steps using random-cluster dynamics. Then, using the translation between the Potts and RC models~\cite{RCMbook}, this gives a sample $\hat\SIGMA$  whose distribution is within TV-distance $\leq \eps$ from $\mu_G$, as wanted. The proof of Theorem~\ref{thm:main2} is analogous using Corollary~\ref{cor:smallgraphb} and Theorem~\ref{thm:RCord}.
\end{proof}

It was shown in \cite{SinclairsGheissari2022} that, for the ferromagnetic Ising model, a sufficient condition for fast convergence to equilibrium within the phase is the property of  \textit{weak spatial mixing (WSM) within a phase}, paired with some mixing-time estimates in local neighbourhoods. The same condition applies for the random-cluster model and, more generally, for monotone models. In the case of the random regular graph, the local neighbourhoods resemble the $d$-regular tree and the required mixing-time results  have been established in \cite{treeRC}. So, the main piece missing in order to obtain Theorems~\ref{thm:RCdis} and~\ref{thm:RCord} is showing WSM within the corresponding phase. Roughly, the notion captures that, when we condition on the phase, the influence of a boundary configuration at distance $\ell$ is small. 

More formally, for a graph $G=(V,E)$, an integer $\ell$ and a vertex $v$, let $B_\ell(v)$ be the graph induced by vertices at distance $\leq \ell$ from $v$, whose vertex and edge sets are going to be denoted by $V(B_\ell(v))$ and $E(B_\ell(v))$, respectively. A boundary condition $\tau$ on $B_\ell(v)$ is a subset of the edges in $E\backslash E(B_\ell(v))$; then $\pi^{\tau}_{G,v,\ell}$ denotes the  RC distribution induced on $B_\ell(v)$ conditioned on $\tau$, i.e., 
for $S\subseteq E(B_\ell(v))$ it holds that $\pi^{\tau}_{G,v,\ell}(S)=\frac{\pi_G(S\cup \tau)}{\sum_{S'\subseteq E(B_\ell(v))} \pi_G(S'\cup \tau)}$. There are two extreme boundary conditions on $B_\ell(v)$ which will be most relevant: (i) the \emph{all-in} or \emph{wired} condition $+$ where all edges in $E\backslash E(B_\ell(v))$ are included in the boundary condition, and (ii) the \emph{all-out} or \emph{free} boundary condition $-$ where none of the edges in  $E\backslash E(B_\ell(v))$ are included. We use $\pi^+_{G,v,\ell}$ and $\pi^-_{G,v,\ell}$ to denote the corresponding conditional RC distributions.

For an edge $e\in E$, let $1_e$ denote the subsets of $E$ that include $e$. Following \cite{RCrandom}, we say that we have WSM within the disordered phase (respectively, ordered) at distance $\ell$ if for each vertex $v$ and each edge $e$ incident to $v$ it holds that $\big|\pi^-_{G,v,\ell}(1_e) - \pi_{G,\dis}(1_e)\big|\leq \frac{1}{100m}$ (respectively, $\big|\pi^+_{G,v,\ell}(1_e) - \pi_{G,\ord}(1_e)\big|\leq \frac{1}{100m}$) with $m=|E|$.

Our main technical results for establishing Theorems~\ref{thm:RCdis} and~\ref{thm:RCord} are the following.
\begin{restatable}{theorem}{wsmdisordered}\label{thm:wsm-disordered}
     Let $d,q\geq 3$ be integers and $\beta\leq \beta_c$ be a positive real. Then, there exists $\delta>0$ such that, whp over $G\sim \cG_{n,d}$, $G=(V,E)$ has WSM within the disordered phase at a distance \(\ell = \lfloor(\frac{1}{2}-\delta)\log_{d-1} n\rfloor\). That is, with $m=|E|$, for every \(v\in V\) and every edge $e$ incident to $v$, it holds that $\big|\pi^-_{G,v,\ell}(1_e) - \pi_{G,\dis}(1_e)\big|\leq \frac{1}{100m}$.
\end{restatable}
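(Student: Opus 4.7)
The plan is to establish WSM by reducing it to a two-point connection estimate for a sample of $\mu_{G,\dis}$, and then to analyse that estimate via planting. First, Lemma~\ref{lem:PottstoRC} lets us replace $\pi_{G,\dis}$ by the auxiliary distribution $\hat\pi_{G,\dis}$ up to an $\emm^{-\Omega(n)}$ additive error, so it suffices to bound $\big|\pi^-_{G,v,\ell}(1_e) - \hat\pi_{G,\dis}(1_e)\big|$. Recall that $\hat\pi_{G,\dis}$ arises by sampling $\sigma \sim \mu_{G,\dis}$ and then performing independent edge-percolation on the monochromatic edges with probability $p=1-\emm^{-\beta}$.

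Next, I apply the standard RC coupling for WSM. Sample $X\sim \hat\pi_{G,\dis}$, reveal the restriction $\tau = X|_{E\setminus E(B_\ell(v))}$, and observe that the conditional distribution of $X|_{E(B_\ell(v))}$ given $\tau$ coincides with $\pi^-_{G,v,\ell}$ provided $\tau$ induces the \emph{free} partition on $\partial B_\ell(v)$, i.e.\ no two vertices of $\partial B_\ell(v)$ lie in the same connected component of $\tau$. Since whp $B_\ell(v)$ is a tree of radius $\ell<\tfrac{1}{2}\log_{d-1}n$ with $|\partial B_\ell(v)| = O((d-1)^\ell)$, it is enough to show that for any two distinct boundary vertices $u_1,u_2\in \partial B_\ell(v)$, the probability that $u_1$ and $u_2$ are connected in $\tau$ is $o\bigl(1/((d-1)^{2\ell} m)\bigr)$.

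To obtain this probability bound I use planting. A connection between $u_1$ and $u_2$ in $\tau$ forces a monochromatic path in $\sigma$ whose edges lie in $E\setminus E(B_\ell(v))$ (and which then survive percolation). Applying Lemma~\ref{lem:quietplanting} transfers the probability of such a monochromatic path, up to a multiplicative factor $1/f(n)$, to the planted model $(\hat\G(\sdis),\sdis)$, where the colour and edge statistics concentrate around $\nudis$ and $\rhodis$ by Lemma~\ref{lem:statistics}. Conditional on these statistics, $(\hat\G(\sdis),\sdis)$ is uniform on compatible configurations, so the probability that $u_1$ and $u_2$ are joined by a monochromatic path of any length reduces to a direct configuration-model computation; this is the content of Lemma~\ref{lem:2l-decay-bound-for-twopath}.

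The main obstacle I anticipate is proving the decay estimate all the way up to $\beta=\beta_c$: in this regime the monochromatic-edge subgraph of the planted disordered model is close to critical, so one needs the per-step decay rate to be strictly better than $(d-1)^{-2}$ in order to beat the union bound over the $O((d-1)^{2\ell})$ pairs in $\partial B_\ell(v)$, together with the additional polynomial loss incurred by the planting-to-conditional transfer. The small parameter $\delta>0$ in the statement is designed precisely to absorb these slacks, forcing $\ell$ to be taken slightly below $\tfrac{1}{2}\log_{d-1}n$.
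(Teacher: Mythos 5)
Your reduction from WSM to a boundary-connectivity event fails at the quantitative step, and the failure is precisely the reason the paper uses a more refined argument.

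You require the boundary partition induced by $\tau=X|_{E\setminus E(B_\ell(v))}$ to be \emph{free}, i.e.\ no two vertices of $\partial B_\ell(v)$ connected outside the ball. To make the union bound over the $\Theta\big((d-1)^{2\ell}\big)=\Theta(n^{1-2\delta})$ pairs give an error $o(1/m)$, you need each per-pair connection probability to be $o\big(n^{-(2-2\delta)}\big)$. But in the disordered phase the percolated monochromatic graph is subcritical, and for two fixed vertices in a subcritical percolated $d$-regular random graph the connection probability is $\Theta(1/n)$ (this follows, e.g., from $\Ex[\sum_i |\cC_i|^2]=\Theta(n)$). So the per-pair probability is off by a polynomial factor $n^{1-2\delta}$, and the total failure probability for the free boundary is $\Theta(n^{-2\delta})$, not $o(1/m)$. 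In other words, the free-boundary event is simply not rare enough to deliver the $\frac{1}{100m}$ bound. You also mis-attribute Lemma~\ref{lem:2l-decay-bound-for-twopath} to a two-point connection calculation; that is not what it does.

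The paper circumvents this by \emph{not} insisting on a free boundary. Lemma~\ref{lem:dispath} uses planting to show that a typical boundary condition from $\pi_{G,\dis}$ is only \emph{$K$-shattered}: all but a constant $K$ of the $\Theta(n^{1/2-\delta})$ boundary vertices are in distinct components of $\tau$ (probability $\geq 1-n^{-3}$ after a union bound over $v$). Then, crucially, Lemma~\ref{lem:2l-decay-bound-for-twopath} imports the two-paths correlation-decay argument of Blanca--Gheissari \cite{BG}: for any $K$-shattered boundary condition $\eta$, the discrepancy $|\pi^{\eta}_{G,v,\ell}(1_e)-\pi^{-}_{G,v,\ell}(1_e)|$ is bounded by the $\tilde\pi$-probability of \emph{two disjoint} open paths from near $v$ to nontrivial boundary components, which on an $L$-treelike ball is $\leq C\hat p^{2\ell-8L}$. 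Since $\hat p<\frac{1}{d-1}$ for $\beta<\beta_u'$ and $\ell\approx\tfrac12\log_{d-1}n$, one gets $\hat p^{2\ell}=n^{-1-\Omega(1)}$, beating $1/m$ without ever needing the boundary to be free. This distinction — shattered vs.\ free, and correlation decay conditional on shattered vs.\ probability of disconnection — is the missing idea, and your plan as written would not yield the required error bound.
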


\begin{restatable}{theorem}{wsmordered}\label{thm:wsm-ordered}
     Let $d\geq 3, q\geq (5d)^5$ be integers and $\beta\geq \beta_c$ be a real.  Then, there exists $\delta>0$ such that, whp over  $G\sim \cG_{n,d}$, $G$ has WSM within the ordered phase at a distance \(\ell = \lfloor(\frac{1}{2}-\delta)\log_{d-1} n\rfloor\). That is, with $m=|E|$, for every \(v\in V\) and every edge $e$ incident to $v$, it holds that $\big|\pi^+_{G,v,\ell}(1_e) - \pi_{G,\ord}(1_e)\big|\leq \frac{1}{100m}$.
\end{restatable}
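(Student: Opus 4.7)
The plan is to establish WSM within the ordered phase by showing that, whp over $\G$, a typical $F\sim \pi_{G,\ord}$ has a \emph{wired boundary} around every vertex at depth $\ell$: near every $v\in V$, every length-$\ell$ path from $v$ in $G$ meets the giant component $\cC$ of $(V,F)$. Given this, FKG monotonicity of the random-cluster measure in boundary conditions, applied to the almost-wired inducer supplied by $\cC$, yields $|\pi^+_{G,v,\ell}(1_e)-\pi_{G,\ord}(1_e)|\leq \emm^{-\Omega(n)}$, well inside the $1/(100m)$ tolerance of the theorem.

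The first step is to transport the analysis to the planted model. Lemma~\ref{lem:PottstoRC}(ii) reduces matters to $\hat\pi_{G,\ord}$, i.e.\ $\mu_{G,\ord}$ composed with percolation at rate $p=1-\emm^{-\beta}$; Lemma~\ref{lem:quietplanting}(2) then replaces $\mu_{\G,\ord}$ by the planted pair $(\hat\G(\sord),\sord)$; and Lemma~\ref{lem:statistics}(ii) pins the statistics near $(\nuord,\rhoord)$. Conditioning on the statistics, the monochromatic edges inside the dominant colour class (of size $\approx an$ with $a>1/q$) form a near-uniform random graph whose effective per-vertex percolation rate is supercritical for $\beta\geq\beta_c$, giving rise to a unique giant component $\cC$ of linear size inside that colour class.

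The technical core is a head-on path-avoidance estimate: for a \emph{fixed} simple path $P=v_0\cdots v_\ell$ in $G$, the planted probability that $P\cap \cC=\emptyset$ is at most $\gamma^{\ell+1}$ with $\gamma=\gamma(d,q)$ satisfying $(d-1)\gamma<1$ when $q\geq (5d)^5$. The argument is to first reveal the Potts colours along $P$: each vertex is non-dominant with probability $\approx 1-a$, contributing a factor $(1-a)$ to $\gamma$; for dominant vertices, the miss-probability is bounded by the extinction probability $\eta<1$ of the offspring branching process on dominant-colour neighbours, giving a further contribution $a\eta$. Tree-likeness of the local neighbourhoods of $\G$ at distance $o(\log n)$ decouples the $\ell+1$ indicators along $P$, yielding $\gamma\leq (1-a)+a\eta$. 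A union bound over the $\leq d(d-1)^{\ell-1}$ paths from $v$ and over $v\in V$, with $\ell=\lfloor(\tfrac12-\delta)\log_{d-1}n\rfloor$, then delivers the wired-boundary event whp.

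The main obstacle is quantitative: verifying $(d-1)\gamma<1$ for all $\beta\geq \beta_c$ once $q\geq (5d)^5$ requires explicit estimates on $a$ and $\eta$ derived from the fixed-point equation~\eqref{eq:nuord}, and is most delicate at $\beta=\beta_c$ where $a$ is only moderately above $1/q$ and the offspring distribution is only mildly supercritical. A stochastic-domination argument, replacing the actual offspring distribution along $P$ with a dominating Galton--Watson process, is what lets one turn the heuristic independence along $P$ into a clean bound and close the gap to the threshold $(5d)^5$.
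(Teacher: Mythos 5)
Your high-level plan — reduce to a wired-boundary event via path-avoidance of the giant in the planted model, then invoke monotonicity — is the same as the paper's (Lemmas~\ref{lem:wsm-implied-from-wired-boundary}, \ref{lem:no-long-path-outside-giant=>wired}, \ref{lem:ordpath}, \ref{lem:ordered-less-than-(d-1)^-3}). But there is a serious quantitative gap that would sink the union bound as you have written it.

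You claim it suffices to have $(d-1)\gamma<1$. Count what the union bound actually gives: there are $\sim n$ choices of $v$, each with $\leq d(d-1)^{\ell-1}$ paths of length $\ell$, and with $\ell=\lfloor(\tfrac12-\delta)\log_{d-1}n\rfloor$ you have $(d-1)^{\ell}\approx n^{1/2-\delta}$. So the total is $\approx n\cdot n^{1/2-\delta}\cdot\gamma^{\ell}$. If $(d-1)\gamma<1$ but only mildly so, say $\gamma=(d-1)^{-1-\eps}$, then $\gamma^{\ell}\approx n^{-(1+\eps)(1/2-\delta)}$, and the product is roughly $n^{3/2-\delta-(1+\eps)(1/2-\delta)}=n^{1-\eps(1/2-\delta)}$, which diverges unless $\eps>2$. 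To actually clear the $1/(100m)=O(1/n)$ target you need $\gamma\lesssim(d-1)^{-5}$, not $\gamma<(d-1)^{-1}$. This is precisely what the paper's Lemma~\ref{lem:ordered-less-than-(d-1)^-3} establishes for $q\geq(5d)^5$, and it is the real reason that threshold appears; your stated condition would not close the proof. Your final claim that the discrepancy is $\emm^{-\Omega(n)}$ is also off: the bound obtained is polynomial, on the order of $n^{-1-\Omega(1)}$, which is enough for $1/(100m)$ but not exponential.

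There is also an imprecision in how you compute $\gamma$. Writing $\gamma\leq(1-a)+a\eta$ treats the colours along $P$ as if they were i.i.d.\ with marginal $a$ on the dominant colour and applies a single extinction probability $\eta$. In the planted model the colours along the path form a (near-)Markov chain whose transition kernel is $\rho_{\ord,ij}/\nu_{\ord,i}$, and the avoidance probability for a fixed coloured path factors accordingly. The paper controls the resulting sum over colourings by bounding the spectral radius of the matrix $M$ with entries $\rho_{\ord,ij}/\hat\nu_{\ord,i}$ (with the dominant-colour row reweighted by $\hat\varphi_1$), obtaining $A=\max_i\{1-(1-\sqrt{\hat\varphi_1})\rho_{\ord,i1}/\nu_{\ord,i}\}$; note the $\sqrt{\hat\varphi_1}$ arises from a symmetrisation and is genuinely sharper than the naive estimate. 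Your convexity-style bound $(1-a)+a\eta$ is neither obviously an upper bound on the correct quantity nor sharp enough to reach $(d-1)^{-5}$ without redoing this spectral calculation. You also cannot simply ``decouple the $\ell+1$ indicators by tree-likeness'': the indicators of hitting the giant at different vertices along $P$ are all functions of the same percolation configuration, and the paper decouples them by conditioning on the colouring and then running the exact-edge-model percolation analysis (Lemma~\ref{lem:giantperc}), which is where the $(p\varphi+1-p)^{d_i}$ product bound comes from.
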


Using these, the proofs of Theorems~\ref{thm:RCdis} and~\ref{thm:RCord} can be done using by now standard arguments; we give the details for completeness in Appendix~\ref{sec:proofmain}.

We therefore focus on the WSM proofs. The proof for the disordered regime $\beta\leq \beta_c$ is given in Section~\ref{sec:disordered-regime}, and the proof for the ordered regime $\beta\geq \beta_c$ is given in Section~\ref{sec:wsm-ordered}.

\section{Proof for WSM in the disordered regime}\label{sec:disordered-regime}
To show WSM in the disordered regime, we follow the strategy outlined in Section~\ref{sec:rf4f4f3}. We begin with some relevant definitions. Let $G=(V,E)$ be a graph. Recall, that for a vertex $v$ and $\ell>0$, $B_\ell(v)$ denotes the radius-$\ell$ ball around $v$ in $G$. Also, denote by $S_\ell(v)$ the set of vertices at distance exactly $\ell$ from $v$. For a set of vertices $S$, we let $E(S)$ the set of edges with both endpoints in $S$.

\begin{definition} 
Let $G=(V,E)$ be a graph and $v$ be a vertex. For $K,\ell>0$, we say that a subset $F\subseteq E$ is \emph{$K$-shattered} at distance $\ell$ from $v$ if all but $K$ vertices of $S_\ell(v)$ belong to distinct components of the graph $(V,F\backslash E(B_\ell(v))$.
\end{definition}

\begin{lemma}\label{lem:dispath}
Let $d,q\geq 3$ be integers and $\beta\leq \beta_c$ be a positive real. For any constant $\epsilon>0$, there is $K>0$ so that the following holds whp over $\G$. For every vertex $v\in V$,  define  the event \[\cA_v=\{F\subseteq E(\G)\mid F \mbox{ is $K$-shattered at distance $\ell=\lfloor (\tfrac{1}{2}-2\epsilon)\log_{d-1}n \rfloor$ from $v$}\}.\]  
Then, for the RC disordered phase of $\G$, we have $\pidis(\bigcap_v\cA_v)\geq 1-1/n^3$.
\end{lemma}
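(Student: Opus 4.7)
The plan is to use planting to move the analysis of $F\sim\pidis$ to a planted-and-percolated model, and then exploit subcriticality of the percolated monochromatic subgraph to bound the number of $S_\ell(v)$-vertices sharing a component.

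First, I apply Lemma~\ref{lem:PottstoRC} to replace $\pidis$ by $\hat\pi_{\G,\dis}$ up to an additive error of $\emm^{-\Omega(n)}$, so that $F$ may be viewed as the percolation (with probability $p=1-\emm^{-\beta}$) of the monochromatic edges of $\SIGMA_{\G,\dis}\sim\mu_{\G,\dis}$. Applying Lemma~\ref{lem:quietplanting}(1) (valid since $\beta\leq\beta_c<\beta'_u$) combined with a two-step Markov-style argument that marginalises out the percolation randomness, the problem reduces to showing that, in the planted-and-percolated distribution (sample $\sdis$, then $\hat\G(\sdis)$, then percolate its monochromatic edges with probability $p$), the event $\bigcap_v\cA_v$ fails with probability at most $n^{-\alpha}$ for some sufficiently large constant $\alpha$.

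Next, conditioning on the vertex and edge statistics $\nu^{\sdis},\rho^{\hat\G(\sdis),\sdis}$---which by Lemma~\ref{lem:statistics} lie within $n^{-1/2+o(1)}$ of $\nudis,\rhodis$ whp---the planted graph becomes uniform over multigraphs with these statistics. For each colour class $c$ of size $\approx n/q$, the percolated monochromatic subgraph within $c$ is then a configuration-model random graph whose Galton--Watson branching number equals $(d-1)(\emm^\beta-1)/(\emm^\beta+q-1)$. A short computation shows that this is strictly less than $1$ for $\beta\leq\beta_c$ (the critical threshold for branching equal to $1$ is $\ln(1+q/(d-2))$, which exceeds $\beta_c$). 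Hence the percolated monochromatic subgraph is subcritical per colour class, and component sizes have exponentially decaying tails.

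To get the per-vertex bound, define $W_v:=\sum_{\{u_0,u_1\}\in\binom{S_\ell(v)}{2}}\mathbf{1}\{u_0,u_1\text{ are connected in }F\setminus E(B_\ell(v))\}$, and observe that $\cA_v^c$ forces $W_v\geq\lceil K/2\rceil$. A standard path-counting argument in the subcritical configuration model yields $\Pr[u_0,u_1\text{ connected}]=O(1/n)$ for any pair, whence $\Ex[W_v]=O(|S_\ell(v)|^2/n)=O(n^{-4\epsilon})$ since $|S_\ell(v)|\leq(d-1)^\ell\leq n^{1/2-2\epsilon}$. Computing the $k$-th factorial moment by summing over $k$-tuples of edge-disjoint connecting paths and using the exponential decay of component sizes in the subcritical regime yields $\Ex\bigl[W_v(W_v-1)\cdots(W_v-k+1)\bigr]\leq (Cn^{-4\epsilon})^k$ for a constant $C=C(\beta,q,d)$. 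Choosing $K$ a large enough constant (depending on $\epsilon,\beta,q,d$) and applying Markov to the $K$-th factorial moment gives $\Pr[\cA_v^c]\leq n^{-\alpha}$ for arbitrarily large $\alpha$; a union bound over $v\in V$ closes the proof.

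The main obstacle is the factorial-moment estimate: each individual pair-connection event has probability $O(1/n)$, but distinct pair-connection events are positively correlated through shared components and overlapping percolation edges, so the argument must carefully leverage the exponential decay of component sizes in the subcritical regime to show that distinct connecting paths behave essentially independently.
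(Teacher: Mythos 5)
Your reduction via Lemmas~\ref{lem:PottstoRC} and~\ref{lem:quietplanting} is the same as the paper's, and your identification of the subcritical Galton--Watson branching parameter $(d-1)(\emm^\beta-1)/(\emm^\beta+q-1)<1$ for $\beta<\beta_u'$ is exactly the observation the paper uses. Where you genuinely diverge is the tail bound in the planted-and-percolated model: you propose a factorial-moment computation for the pair-count $W_v$, whereas the paper runs a single explicit BFS exploration of all components of $S_\ell(v)$ in the percolated graph, controlling two kinds of events---cycle-forming matches ($\vecone_t$, stochastically dominated by $\Bin(T,n^{-1/2-\epsilon/2})$ with $T=\lfloor n^{1/2-\epsilon}\rfloor$) and surviving monochromatic edges ($\vecone_t'$, dominated by a strictly subcritical binomial so the exploration terminates by time $T$). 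Since each merge of two $S_\ell(v)$-components corresponds to exactly one cycle event, ``$\sum_t\vecone_t\le K$ and exploration complete'' immediately gives $K$-shattering, with no correlation bookkeeping required. Both strategies target the same polynomial decay, but the exploration approach sidesteps the whole issue of overlapping connectivity events.

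The gap in your proposal is exactly the step you flag as the ``main obstacle'': the factorial-moment bound $\Ex[W_v(W_v-1)\cdots(W_v-k+1)]\le(Cn^{-4\epsilon})^k$ is asserted, not proved, and the difficulties are substantive. When $k$ pairs share a common vertex $u_0$, the joint event becomes ``the component of $u_0$ contains $k$ prescribed vertices of $S_\ell(v)$'', whose probability requires control of $\Ex[|C(u_0)|^k]$---so you would first need to establish exponential tails for component sizes in a configuration-model percolation with a slightly non-regular degree sequence, uniformly in the conditioning on $(\nu,\rho)$. Your appeal to ``summing over $k$-tuples of edge-disjoint connecting paths'' implicitly invokes a BK-type disjoint-occurrence inequality, but the configuration model (even after conditioning on statistics) is not a product measure over edges, so BK does not apply off-the-shelf; one would have to either work purely in the percolation layer (conditioning on the underlying graph, which reintroduces the problem of uniform tail control over random graphs) or re-derive the needed negative-dependence estimate. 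None of this is obviously fatal, but none of it is done, and it sits precisely where the paper does real work; the proof as written does not close.
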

\begin{proof}
 For a graph $G\in \cG_{n,d}$ and $\sigma\in [q]^n$, let $\F(G,\sigma)$ denote the random subset of edges obtained by keeping each monochromatic edge of $G$ under $\sigma$ with probability $p$. Fix $\epsilon >0$ and let $K = \lceil 10/\epsilon \rceil$. We will consider the disordered planted graph $(\hat\G(\sdis),\sdis)$ and show   that for an arbitrary vertex $v$ it holds that \begin{equation}\label{eq:key124}
\Pr\Big[\F\big(\hat\G(\sdis),\sdis\big)\notin \cA_v\Big]\leq 2/n^5.
\end{equation}
Assuming this for now, we obtain by Lemma~\ref{lem:quietplanting} and a union bound over the vertices that $\Pr\big[\F\big(\G,\SIGMA_{\G,\dis}\big)\notin\bigcap_{v\in V}\cA_v\mid \G\big]\leq \ln n/n^4 $ as well.
Note that $\F\big(\G,\SIGMA_{\G,\dis})$ has distribution $\hat{\pi}_{{\G},\dis}$, so Lemma~\ref{lem:PottstoRC} yields that  $\pidis(\bigcap_v\cA_v)\geq \hat{\pi}_{\G,\dis}(\bigcap_v\cA_v)-\emm^{-\Omega(n)}\geq 1-1/n^3$.

It remains to prove \eqref{eq:key124}.  By Lemma~\ref{lem:statistics}, we have  
$\big\|\nu^{\sdis}-\nudis\big\|_1+\big\|\rho^{\hat\G(\sdis),\sdis}-\rhodis\big\|_1=O(n^{-1/3})$ w.p. $1-\emm^{-\Omega(n^{1/3})}$. Consider arbitrary $\nu$ and $\rho$ with $\big\|\nu-\nudis\big\|_1+\big\|\rho-\rhodis\big\|_1=O(n^{-1/3})$. Conditioned on $\nu^{\sdis}=\nu$ and $\rho^{\hat\G(\sdis),\sdis}=\rho$, we have that $(\hat\G(\sdis),\sdis)$ is uniformly random among all pairs $(G,\sigma)$ with $\nu^{\sigma}=\nu$ and $\rho^{G,\sigma}=\rho$.

To proceed, fix $\sigma$ with  $\nu^{\sigma}=\nu$ and condition on $\sdis=\sigma$. Condition further that $\rho^{\hat\G(\sdis),\sdis}=\rho$ and on the induced graph on $B_\ell(v)$. For convenience, let $\tilde{\F}$ denote the (random) set of edges outside $B_\ell(v)$ after the percolation step, i.e., the set $\F\big(\hat\G(\sdis),\sdis\big)\backslash E(B_\ell(v))$.  

Let $T:=\lfloor n^{1/2-\epsilon}\rfloor$. For $t=1,\hdots,T$, we explore  the (union of the) components of vertices $u\in S_\ell(v)$ in the graph $(V,\tilde\F)$ by revealing at each step an edge incident to a vertex belonging to any of these (chosen arbitrarily). We denote by $A_t$ the set of ``active vertices'' in these components that still have unmatched half-edges, so that $|A_t|=0$ implies that all the components have been fully explored. Let $A_{\leq t}=\cup_{t'\leq t}A_{t}$ denote the set of all vertices that have been active up to time $t$. The process is described precisely in Figure~\ref{alg:gl}.
\begin{figure}[h]
\begin{mdframed}
\textbf{Exploration of $(V,\tilde\F)$ starting from $S_\ell(v)$} 
\vskip 0.1cm
Let $t=0$, $T=\lfloor n^{1/2-\epsilon}\rfloor$, and $A_0=S_\ell(v)$.\vskip 0.08cm
While $|A_t|\neq 0$ or $t\leq T$: \vskip 0.05cm
\begin{itemize}[leftmargin=0.8cm]
    \item If $|A_t|=0$, pick any unmatched half-edge $(w,r)$ and  run \textsc{Match}$(w,r)$; 

    increase $t$ by 1 and set $A_t=\emptyset$ \vskip 0.15cm
    \item Otherwise, pick $w\in A_{t}$. For $r=1,\hdots,d$:
\begin{description}[before={\renewcommand\makelabel[1]{\bfseries ##1}}]
    \item \ \ \ If  the half-edge $(w,r)$ is not matched:
\begin{description}[before={\renewcommand\makelabel[1]{\bfseries ##1}}]
\item increase $t$ by 1;
\item match $(w,r)$ to another half-edge, let $z=\textsc{Match}$$(w,r)$.
\item if $\sigma(z)=\sigma(w)$, w.p. $p=1-\emm^{-\beta}$ set $A_t\leftarrow A_{t-1}\cup \{z\}$; else set $A_t=A_{t-1}$.
\end{description}
\end{description}
 Remove $w$ and any other  vertices from $A_t$ which don't have unmatched half-edges.
\end{itemize}
\vskip 0.3cm
\textsc{Match}$(w,r)$\\
For $i,j\in [q]$, let $M_{ij,t}$ be the number of currently unmatched half-edges from colour $i$ to $j$.
\begin{itemize}[leftmargin=0.6cm]

\item Let $i=\sigma(w)$. Choose a colour $j\in [q]$ from the distribution  $\{\frac{M_{ij,t}}{\sum_{j'}M_{ij',t}}\}_{j\in [q]}$.
 \item From the set of unmatched half-edges from colour $j$ to $i$, choose one u.a.r., say $(z,r')$.
\item match $(z,r')$ with $(w,r)$,  and return the vertex $z$.
\end{itemize}
\end{mdframed}
\caption{\label{alg:gl} The exploration process of the percolated planted graph used in the proof of Lemma~\ref{lem:dispath}. This is a combination of exposing first an edge  in the planted configuration model, captured by the routine \textsc{Match}$(\cdot,\cdot)$, followed by a percolation step with probability $p=1-\emm^{-\beta}$ whenever the edge is monochromatic.}
\end{figure}

Note that at any time $t$, for $i,j\in [q]$, there is a number  $M_{ij,t}$ of prescribed half-edges with one endpoint of colour $i$ that needs to be matched to a vertex of colour $j$ (note, this goes down when we reveal such an edge connecting colours $i$ and $j$). The revealing of an edge at time $t$ adjacent to an active vertex $w$ with colour $\sigma(w)=i$ has therefore two stages: we first reveal the other endpoint $z$ in the graph $\hat\G(\sigma)$, chosen proportionally to the counts $M_{ij,t}$; if $\sigma(w)=\sigma(z)$,  we further toss a coin with probability $p=1-\emm^{-\beta}$ to determine whether the edge belongs to $\tilde\F$, and add $z$ to the set of active vertices accordingly. 
The key point here is that we only need to reveal the components involving $S_\ell(v)$ in the graph $(V,\tilde\F)$, rather than the components in $\hat\G(\sdis)$ (which are much bigger). Indeed, as we will see shortly the former is dominated by a subcritical branching process and therefore at most $O(\log n)$ vertices are revealed for each of the vertices in $S_\ell(v)$.

To make this precise, for a time $t=1,\hdots, T$, let  $\vecone_t$ denote the indicator of the event that the $t$-th half edge was matched  to a vertex in the set $A_{\leq t}:=\cup_{t'\leq t}A_{t}$ consisting of vertices that have been active at some point. Let also $\vecone_t'$ be the indicator that the edge revealed was monochromatic and that it survived the percolation step. Note that  the number of the remaining unmatched half-edges $M_{ij,t}$ from colour $i$ to $j$ satisfies $0\leq  dn\rho_{ij}-M_{ij,t}\leq 2T\leq 2n^{1/2-\epsilon}$, so $M_{ij,t}= d n\rho_{ij}+o(n)$ for all the relevant $t$. 
% I changed the two d/2 above to d -- it doesn't matter below, but I think it should be d 
Similarly, $|A_{\leq t}|\leq |S_\ell(v)|+T\leq 2n^{1/2-\epsilon}$. So, with $\cE_{ t}$ denoting the first $t$ edges revealed,  we have the crude bounds
\[\Pr(\vecone_{t}\mid \cE_{t-1} )\leq \frac{d |A_{\leq t}|}{ \min _{i}\sum_j M_{ij,t}}\leq \frac{1}{n^{1/2+\epsilon/2}} \quad\mbox{and}\quad \Pr(\vecone_{t}'\mid \cE_{t-1} )\leq p\max_{i\in [q]}\frac{M_{ii,t}}{ \sum_{j} M_{ij,t}}\leq \frac{\emm^{\beta}-1}{\emm^{\beta}+q-1}+o(1).\] 
It follows that the sum $\sum^T_{t=1}\vecone_t$ is dominated above by a sum of $T$ independent coin tosses with probability $\frac{1}{n^{1/2+\epsilon/2}}$, whereas the sum $\sum^T_{t=1}\vecone_t'$ is dominated above by a sum of $T$ independent coin tosses with probability $\frac{\emm^{\beta}-1}{\emm^{\beta}+q-1}+o(1)<\frac{1-2\kappa}{d-1}$ for some small constant $\kappa=\kappa(q,d,\beta)>0$ (using that 
%${\emm^{\beta}+q-1}<\frac{1-2\kappa}{d-1}$ 
$\emm^\beta - 1 < q/(d-2)$
% this seems more helpful?
for $\beta<\beta_u'$). Therefore, recalling that $K=\lceil 10/\epsilon\rceil$ and $T=\lfloor n^{1/2-\epsilon}\rfloor$, 
we have 
\[\Pr\bigg(\sum^T_{t=1}\vecone_t> K\bigg)\leq \binom{T}{K} \frac{1}{n^{K(1/2+\epsilon/2)}}\leq 1/n^5 \quad\mbox{and}\quad \Pr\bigg(\sum^T_{t=1}\vecone_t'\geq \frac{T(1-\kappa)}{d-1}\bigg)=\emm^{-\Omega(n^{1/3})},\]
where the second probability bound follows by standard Chernoff bounds (using that $T= \Omega(n^{1/3})$).

Note that the event $|A_T|>0$ implies that $|A_t|>0$ for all $t=0,1,\hdots, T$ which in turn implies that at least one vertex gets removed from the set of active vertices every (at most) $d-1$ time steps; indeed, when exploring the neighbours of a vertex $w\in A_t$ at a particular time $t$, there are at most $d-1$ half-edges incident to $w$ remaining unmatched since one of the $d$ half-edges of $w$ has been previously matched (to activate $w$). We therefore have the inequality $0\leq |A_{T}|\leq |S_\ell(v)|+\sum^{T}_{t=1} \vecone_t'-\frac{T-(d-1)}{d-1}$.  Since $|S_\ell(v)|\leq d(d-1)^\ell\leq d n^{1/2-2\epsilon}\leq %\frac{(1-2\kappa)T}{d-1}$, 
\frac{\kappa T}{d-1}$, 
it follows that
\[\Pr(|A_T|>0)\leq \Pr\bigg( |S_\ell(v)|-\frac{T-(d-1)}{d-1}+\sum^{T}_{t=1} \vecone_t'\geq 0\bigg)\leq \Pr\bigg( \sum^T_{t=1}\vecone_t'\geq \frac{T(1-\kappa)}{d-1}\bigg)=\emm^{-\Omega(n^{1/3})}.\]

Combining these, we obtain that with probability $\geq 1-2/n^5$ both of $|A_T|=0$ and $\sum^T_{t=1}\vecone_t< K$ hold. The first implies that by time $T$ all the components of $S_\ell(v)$ in the graph $(V,\tilde\F)$ have been explored, and the second implies that all but (at most) $K$ of these vertices belong to distinct components. This finishes the proof of \eqref{eq:key124} and hence completes the proof of the lemma.
\end{proof}

We next use an argument of Blanca and Gheissari \cite{BG} which shows a correlation decay bound for $K$-shattered configurations in a treelike graph in terms of the existence of two root-to-leaf paths. The argument in \cite{BG} is technically proved for $\beta<\beta_u$ using some uniqueness estimates, but it can be extended it to the regime $\beta\leq \beta_c$ by observing that in a treelike ball, a $K$-shattered configuration induces a distribution that is comparable to percolation on the tree with parameter $\hat p$ (this coincides with the cut-edge update probability in the definition of the random cluster dynamics). A key feature of the correlation decay bound in \cite{BG} is the existence of two paths which gives the $\hat p^{2\ell}$ term in the lemma below, and which ultimately allows us to make the probability less than $\frac{1}{100m}$ that is required for WSM, while keeping the distance $\ell$ less than $\frac{1}{2}\log_{d-1}n$ that is the threshold for having treelike neighbourhoods. We give the proof of the following lemma in 
Appendix~\ref{sec:appendix_proof4.3}.

\newcommand{\statelemdecay}{    Let \(d,q\geq 3\) be integers, and \(\beta \leq \beta_c\), \(\delta\in(0,1/2)\) positive reals.

    There exists a constant \(C = C(d,q,\beta,\delta) > 0\) such that whp over \(G\sim\cG_{n,d}\) the following holds. For all \(v\in V(G)\) and an edge \(e\) incident to it, and \(\ell := \lfloor(\tfrac12 - \delta)\log_{d-1}n\rfloor\),
    \[
    \big|\pi_{G,v,\ell}^-(1_e)-\pi_{G,\dis}(1_e)\big| \leq C \hat p^{2\ell} + \frac{1}{n^3} \mbox{ where $\hat p := \frac{\emm^\beta - 1}{q + \emm^\beta - 1}$.}
    \]}
\begin{lemma}\label{lem:2l-decay-bound-for-twopath}
\statelemdecay
\end{lemma}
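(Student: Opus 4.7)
The plan is to reduce the problem to a $K$-shattered boundary via Lemma~\ref{lem:dispath} and then run a tree-percolation argument on the treelike ball $B_\ell(v)$ as in \cite{BG}. First, I would apply Lemma~\ref{lem:dispath} with $\eps := \delta$ to obtain a constant $K = K(d,q,\beta,\delta)$ such that, whp over $\G$, the event $\cA_v$ that $F$ is $K$-shattered at distance $\ell$ from $v$ satisfies $\pi_{G,\dis}(\cA_v) \geq 1 - 1/n^3$. Standard configuration-model estimates also give that whp $B_\ell(v)$ is a tree for every $v\in V$, since $\ell \leq (\tfrac12-\delta)\log_{d-1}n$. Writing $\eta(F) := F\setminus E(B_\ell(v))$, the tower property gives
\[
\pi_{G,\dis}(1_e) = \Ex_{F\sim \pi_{G,\dis}}\!\left[\pi^{\eta(F)}_{G,v,\ell}(1_e)\right],
\]
so, absorbing the $1/n^3$ failure probability of $\cA_v$ into the second term of the stated bound, it suffices to show that for every $K$-shattered boundary condition $\eta$ on a treelike $B_\ell(v)$,
\[
\bigl|\pi^{\eta}_{G,v,\ell}(1_e) - \pi^{-}_{G,v,\ell}(1_e)\bigr| \leq C'\hat p^{2\ell},
\]
for some constant $C' = C'(d,q,\beta,K)$.

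The key observation driving the tree argument is that on a tree every edge is a cut edge, so $\pi^-_{G,v,\ell}$ coincides exactly with Bernoulli bond percolation of parameter $\hat p$. Under a $K$-shattered boundary $\eta$, at most $K$ nontrivial boundary components act as wirings identifying subsets of $S_\ell(v)$. Expanding the ratio $\pi^{\eta}_{G,v,\ell}(1_e)/\pi^{-}_{G,v,\ell}(1_e)$ by cluster inclusion--exclusion over pairs of wired leaves $(u,u')$, the contribution of any such pair to the marginal of $e$ is nonzero only when $e$ lies on the unique $u$--$u'$ path in $B_\ell(v)$; in that case it is controlled by the probability, under $\hat p$-percolation, that this path is open. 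Since $u,u' \in S_\ell(v)$ must then lie on opposite sides of $e$, the path decomposes into two edge-disjoint subpaths of length $\geq \ell$ on each side of $e$, contributing at most $\hat p^{2\ell}$ per wired pair.

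The main obstacle is bounding the resulting sum over wired leaf pairs without incurring a cost that blows up with $|S_\ell(v)|$. Following \cite[Lemma~4.3]{BG}, one enumerates contributions by pairs of root-to-leaf paths branching off the two sides of $e$ in the two disjoint subtrees of $B_\ell(v)-\{e\}$; $K$-shatteredness ensures that only a bounded number of wired leaves contribute through any such pair of paths, and the independence of the two subtrees under the free percolation measure lets the two survival probabilities multiply cleanly. A careful telescoping over the tree recursion---which is where the hypothesis $\beta\leq \beta_c$ enters via comparability to subcritical $\hat p$-percolation on the truncated tree---then yields the bound $C\hat p^{2\ell}$ with $C = C(d,q,\beta,\delta)$ independent of $n$, which combined with the $1/n^3$ term completes the proof.
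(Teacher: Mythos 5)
Your proposal hinges on the claim that ``whp $B_\ell(v)$ is a tree for every $v\in V$'', and this is false: for a fixed $v$, the probability that $B_\ell(v)$ contains a cycle is of order $(d-1)^{2\ell}/n = n^{-2\delta}$, so a union bound over all $n$ vertices gives error $n^{1-2\delta}$, which does not vanish for $\delta<1/2$. The correct statement --- and what the paper uses --- is that whp there is a constant $L=L(\delta,d)$ such that \emph{every} ball $B_\ell(v)$ becomes a tree after removing at most $L$ edges (``$L$-treelike''). This distinction is not cosmetic for your argument: with even one cycle inside the ball, it is no longer true that every edge is a cut edge, so $\pi^-_{G,v,\ell}$ is no longer Bernoulli$(\hat p)$ bond percolation, and the clean ``two independent subtrees of $B_\ell(v)\setminus\{e\}$'' picture breaks down. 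The paper's proof contends with exactly this: it compares $\pi^\eta_{G,v,\ell}$ to the tree distribution $\tilde\pi$ obtained by \emph{conditioning} the excess edge set $E_\cE$ (with $|E_\cE|\le L$) to be out, shows the relative density between the two measures is controlled by a constant factor $q^{L+K}[\max(\emm^\beta-1,(\emm^\beta-1)^{-1})]^L\cdot 2^L$, and accounts for the fact that paths can use up to $8L$ fewer than $2\ell$ edges, yielding the $\hat p^{2\ell-8L}$ estimate (the $\hat p^{-8L}$ is absorbed into $C$). None of this bookkeeping appears in your sketch, and it would not all collapse to triviality even if you replaced ``tree'' by ``treelike'': you need the comparison-of-measures step and the $K^2(2L)^{2L}(2^L)^{2L}$ combinatorial factor from [BG, eq.\ (5.4)].

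A couple of secondary points. Your claim that $\beta\le\beta_c$ enters ``via comparability to subcritical $\hat p$-percolation on the truncated tree'' is not where the hypothesis is actually used: in the paper, $\beta\le\beta_c$ is only needed to invoke Lemma~\ref{lem:dispath} (which requires the disordered phase to exist), while subcriticality of $\hat p$ (equivalent to $\beta<\beta_u'$, a strictly weaker condition) is what makes $\hat p^{2\ell}$ small \emph{in the downstream application} (Theorem~\ref{thm:wsm-disordered}), not in this lemma's proof. Also, the inclusion--exclusion over wired leaf pairs and the ``telescoping over the tree recursion'' you invoke are too compressed to check: the paper instead cites the precise form of [BG, Lemma 5.3] (bounding the edge-marginal TV distance by the probability of a two-path event $\Upsilon^\eta_{G,v,\ell}$ involving \emph{every} shell $S_i(v)$) followed by [BG, eq.\ (5.4)], and then does a short but nontrivial density comparison. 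If you want to make your sketch rigorous, the shortest path is to follow these two cited results rather than re-derive an inclusion--exclusion from scratch.
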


We can now give the proof of Theorem~\ref{thm:wsm-disordered}.

\wsmdisordered*
\begin{proof}    
    Consider \(\beta\leq \beta_c\). Since $\beta_c<\beta_u'=\ln(1+\frac{q}{d-2})$, we have that $\hat p(\beta) < \hat p(\beta_u')=\frac{1}{d-1}$. Hence there is 
    an $\epsilon(q,d,\beta)>0$ such that \(\hat p(\beta) = \frac{1}{(d-1)^{1+\epsilon}}\). Let \(\delta = \frac{\epsilon}{4(1+\epsilon)}\) and \(\ell = \lfloor(\frac{1}{2}-\delta)\log_{d-1}n\rfloor\).
    
    Lemma~\ref{lem:2l-decay-bound-for-twopath} gives that, whp over \(G\sim\cG_{n,d}\), for any edge \(e\) and vertex \(v\) incident to it, it holds that \( \big|\pi_{G,v,\ell}^-(1_e)-\pi_{G,\dis}(1_e)\big|\leq C \hat p^{2l} + \frac{1}{n^3}\) . By the choice of \(\delta,\epsilon\) and \(\ell\), we get that $\hat p^{2l}\leq  n^{-(1+\epsilon/2)}/(\hat p^2)$, so for large enough $n$
   we have $C{\hat p}^{-2} n^{-(1+\epsilon/2)} + n^{-3} \leq \frac{1}{100m}$, concluding the proof of WSM within the disordered phase.
\end{proof}

\section{Proof for WSM within the ordered regime}\label{sec:orderedproof}

To show WSM within the ordered phase for the random-cluster model on $G\sim \cG_{n,d}$, we follow the strategy outlined in Section~\ref{sec:rf4f4f3}. The core of the argument will be to show that there are no paths of length $\ell\geq 1/2\log_{d-1}n$ that avoid the largest component in a typical configuration from $\pi_{G,\ord}$. We will do this by considering percolation on  the planted random graph model $(\hat\G(\sord),\sord)$  for $\mu_{G,\ord}$. It will thus be relevant to  consider random graph percolation on (almost) $d$-regular random graphs where we have removed a path of length $\ell$.

For a graph $G$, let $\cC_1(G)$ denote the largest component of $G$ (breaking ties arbitrarily), and more generally, $\cC_i(G)$ denote the $i$-th largest component of $G$. It is a standard fact \cite{perc1,perc2,coja2023} that the size of the largest component in a percolated random $d$-regular graph is controlled by the extinction probability $\varphi$ of a Galton-Watson process with distribution $\mathrm{Bin}(d-1,p)$ on the tree. For $p\in (\tfrac{1}{d-1},1)$, define $\varphi=\varphi(p), \chi=\chi(p)$ and $\hat\varphi=\hat\varphi(p)$ to be the unique solutions on the interval $(0,1)$ to the following
\begin{equation}\label{eq:phi}
\varphi=(1-\varphi+p\varphi)^{d-1},\    \chi:=1-(p \varphi+1-p)^d, \ \hat\varphi:=(1-\varphi+p\varphi)^{d-2}.
\end{equation}
In particular, the size of the largest component is roughly equal to $\chi n+o(n)$; see \cite{perc1,perc2} for more details. We also remark that the quantity $\hat\varphi$ is the extinction probability of a slightly modified GW-tree where every vertex has degree $d$ apart from the root which has degree $d-2$; this will be relevant later for our path considerations. 

We show a similar result for the size of the giant component for slightly non-regular degree sequences $\vec d$, where we allow a small set~$S$ of vertices to have degree less than $d$; in our WSM setting, we will further be interested in the probability that a particular subset of the vertices $S'$ does not belong to the giant. A variant of percolation we will use is the \emph{exact edge model} for random graphs with a given degree sequence $\vec d = (d_1,\dots,d_n)$ and edge count $m\leq \tfrac12\sum d_i$. Let $\tilde\G_{n,\mathbf{d},m}$ be the random graph model where each $i\in [n]$ is associated to $d_i$ half-edges; then we choose a subset of $2m$ half-edges and a matching of these uniformly at random; the final (multi)graph is obtained by projecting the edges on the vertex set $[n]$. 
\newcommand{\statelemgiantperc}{Let $d\geq 3$ be an integer and $p\in (0,1)$. Let $\delta,\eps>0$ be arbitrarily small constants.

Suppose that $S\subseteq [n]$ satisfies $|S|=O(n^{1/6})$ and  $\mathbf{d}=(d_1,\hdots, d_n)$ is a degree sequence such that $d_i=d$ for $i\notin S$ and $d_i\in [0,d]$ for $i\in S$. Then, there exists a constant \(M(d, p) > 0\), such that for $\tilde\G_{n,\mathbf{d},m}$ with $m=\tfrac{1}{2}dpn+o(n)$, the following hold:
\begin{enumerate}
    \item If $p<\tfrac{1}{d-1}$, $\Pr\big[\cC_1(\tilde\G_{n,\vec d,m})\geq Mn^{1/2}]\leq \emm^{-\Omega(n)}$.
    \item If $p>\frac{1}{d-1}$, $\Pr\big[\cC_1(\tilde\G_{n,\vec d,m})\leq(\chi- \delta)n]\leq \emm^{-\Omega(n)}$ and $\Pr\big[\cC_2(\tilde\G_{n,\vec d,m})\geq Mn^{2/3}]\leq \emm^{-\Omega(n)}$. Moreover, with $\varphi=\varphi(p)$ as in \eqref{eq:phi}, for any subset $S'\subseteq S$,  
\[\Pr\big[S'\cap  \cC_1(\tilde\G_{n,\vec d,m})=\emptyset\big]\leq(1+\eps)^{|S|}\prod_{i\in S'}  (p \varphi+1-p)^{d_i}.\]
\end{enumerate}}
\begin{lemma}\label{lem:giantperc}
\statelemgiantperc
\end{lemma}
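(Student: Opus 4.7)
The plan is to reduce the exact-edge model with the perturbed degree sequence to bond percolation on the $d$-regular configuration model, apply the classical giant-component estimates, and then establish the factorisation bound by a joint breadth-first exploration from the vertices of $S'$ coupled with independent Galton--Watson processes. For the reduction, note that $\tilde\G_{n,\vec d,m}$ may be obtained by performing bond percolation with retention probability $p$ on the configuration model with degree sequence $\vec d$ and conditioning on the number of surviving edges being exactly $m$. Since $m=\tfrac{1}{2}dpn+o(n)$ lies within the standard concentration window of the number of surviving edges, this conditioning inflates probabilities by at most a polynomial factor, so any $\emm^{-\Omega(n)}$ bound under unconditional percolation carries over. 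Moreover, since $|S|=O(n^{1/6})$, one can complete $\vec d$ to a fully $d$-regular degree sequence by adding at most $d|S|=O(n^{1/6})$ half-edges at vertices of $S$; the resulting configuration model differs from the one on $\vec d$ by at most $O(n^{1/6})$ edges, which is negligible for the component-size thresholds in the lemma.

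Classical theory of percolation on random $d$-regular graphs (see \cite{perc1,perc2}) yields exponential concentration: in the subcritical case $p<1/(d-1)$ the largest component has size $O(\log n)$ with probability $1-\emm^{-\Omega(n)}$, while in the supercritical case $p>1/(d-1)$ we have $|\cC_1|=\chi n+o(n)$ and $|\cC_2|=O(\log n)$ with probability $1-\emm^{-\Omega(n)}$. Transferring these to $\tilde\G_{n,\vec d,m}$ via the coupling above, and losing at most $O(n^{1/6})$ vertices to the perturbation by $S$, we obtain items~(1) and~(2) of the lemma (the $Mn^{1/2}$ and $Mn^{2/3}$ thresholds comfortably absorb both the transfer error and the perturbation).

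For the factorisation bound, the approach is to perform a simultaneous BFS exploration from the vertices of $S'$ in $\tilde\G_{n,\vec d,m}$, following only the present edges. Each half-edge incident to a root $v\in S'$ is either absent (probability $1-p$) or present (probability $p$); in the latter case the exploration continues from a uniformly random endpoint whose further subtree is dominated by the Galton--Watson tree with offspring distribution $\mathrm{Bin}(d-1,p)$, which goes extinct with probability $\varphi$. Hence each half-edge fails to connect $v$ to an infinite cluster with probability $p\varphi+(1-p)$, and the marginal probability that $v$ is not in the giant is approximately $(p\varphi+1-p)^{d_v}$. Since $|S|=O(n^{1/6})$, the $\ell$-neighbourhoods of $S$ remain pairwise disjoint up to depth $\ell=\Theta(\log n)$ with probability $1-\emm^{-\Omega(n^{c})}$ for some $c>0$, so the explorations from distinct roots in $S'$ are coupled with independent GW trees until each either goes extinct or reaches size $\gtrsim n^{1/3}$. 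In the first case the product of extinction probabilities gives $\prod_{v\in S'}(p\varphi+1-p)^{d_v}$ up to $(1+o(1))^{|S|}$ multiplicative slack; in the second case, a sprinkling argument combined with the $|\cC_2|=O(\log n)$ estimate from the previous step shows that such an exploration merges with $\cC_1$ with probability $1-\emm^{-\Omega(n^{1/3})}$, so this regime does not contribute to the event $\{S'\cap \cC_1=\emptyset\}$.

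The main obstacle is the last step: coupling the joint BFS with independent GW processes quantitatively enough to keep the error factors of the form $(1+\eps)^{|S|}$ rather than letting correlation errors accumulate across the $|S'|$ explorations. This requires careful control of the sizes of partial explorations before they either extinguish or merge with the giant, as well as a sprinkling argument to rule out large-but-not-giant components with enough probability to survive the factorised product. Standard techniques — Aldous-style coupling of the early exploration with a GW process, plus sprinkling for the late stage — should suffice, provided the constants are tracked carefully so that the dependence on $|S|$ enters only through the multiplicative factor $(1+\eps)^{|S|}$.
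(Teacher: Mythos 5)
Your high-level plan for the factorisation bound is a reasonable sketch, but it leaves the hardest step open, and the paper closes that step in a genuinely different way than what you propose. You explicitly flag as ``the main obstacle'' the task of coupling the joint BFS with independent GW processes while keeping the accumulated error in the multiplicative form $(1+\eps)^{|S|}$, and then assert that ``Aldous-style coupling plus sprinkling should suffice provided the constants are tracked carefully.'' That is precisely the part that is nontrivial, and the paper does \emph{not} rely on sprinkling. Instead, after revealing depth-$R$ neighbourhoods around $S'$ and discarding the $O(\eps_0|S'|)$ roots whose neighbourhoods are not acyclic/disjoint/fully-$d$-regular (Claim~1), the paper \emph{regroups} the $(d-1)$ unrevealed endpoints of the surviving depth-$R$ leaves of each root $v$ into $\lfloor L_v(d-1)/d\rfloor$ fresh degree-$d$ super-vertices. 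The unrevealed part of the graph, together with these super-vertices, is then a clean $d$-regular exact-edge model $\H$. The punchline is a symmetry argument: conditional on $|\cC_1(\H)|$, the vertex set of the giant of $\H$ is a uniformly random subset, so for each root that survives to depth $R$ with $\geq K(d-1)$ super-vertices (guaranteed with probability $1-o_R(1)$ by Corollary~\ref{cor:prob-constant-no-survivors-in-BP}), the chance all of them miss the $\Omega(n)$-sized giant is at most $(1-\chi/3)^{K(d-1)}\leq\eps_1/2$, and these events factorise across roots by revealing one root at a time. This ``hidden $d$-regular graph'' trick avoids sprinkling entirely, which matters: in the exact-edge model one cannot simply sprinkle additional independent edges, so a sprinkling argument would require reserving edges in advance and re-deriving the giant estimates for the reduced edge budget, a complication your sketch does not address.

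Two further inaccuracies: your intermediate claims that $|\cC_1|=O(\log n)$ (subcritical) and $|\cC_2|=O(\log n)$ (supercritical) hold with probability $1-\emm^{-\Omega(n)}$ are too strong; that is why the lemma statement and the paper's Lemma~\ref{lem:giantperc-regular} go only as far as $Mn^{1/2}$ and $Mn^{2/3}$ (the latter coming from the $\sum_{i\geq 2}|\cC_i|^2\leq Mn$ concentration result). And the reduction step in which you ``complete $\vec d$ to a $d$-regular degree sequence by adding $O(n^{1/6})$ half-edges'' is not a clean coupling: deleting the new half-edges from a uniform matching of the completed half-edge set does not leave a uniform perfect matching of the original half-edges (some will end up unmatched). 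The paper instead conditions on the number of percolated edges meeting $S$ and observes that, by symmetry, the subgraph on $[n]\setminus S$ is then an exact-edge $d$-regular model, which gives Lemma~\ref{lem:graph-with-almost-regular-deg-sequence-has-good-components} directly from the regular case.
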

Intuitively, the bound in the second item follows by revealing the $R$-neighbourhoods of vertices in $S'$ for a suitably large constant $R$ (these are whp trees and disjoint from each other); then, for a vertex $i\in S'$, the factor $(p \varphi+1-p)^{d_i}$ captures the probability that the percolation dies out within the neighourhood of the vertex $i$. The full proof of Lemma~\ref{lem:giantperc}  is given in Appendix~\ref{sec:appendix_proof5.1}.

Next, we consider the density of the edges inside the color classes. The following lemma says that in the ordered regime one colour class is supercritical, while the others subcritical.
Recall that \(\nu_\ord\) and \(\rho_\ord\) are the vertex and edge statistics for the ordered phase respectively, defined in~\eqref{eq:nuord} and~\eqref{eq:rhostatistics}.

\begin{lemma}[{\cite[Lemma 5.6]{coja2023}}]\label{lem:percord}
Let $q,d\geq 3$ be integers. For $\beta>\beta_u$ and $p=1-\emm^{-\beta}$,  the ordered vectors $\nu_\ord$ and $\rho_\ord$ satisfy $p\frac{\rho_{\ord,ii}}{\nu_{\ord,i}}>\frac{1}{d-1}$ for $i=1$,  and $p\frac{\rho_{\ord,ii}}{\nu_{\ord,i}}<\frac{1}{d-1}$ for $i\in \{2,\hdots,q\}$.
\end{lemma}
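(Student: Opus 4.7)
The plan is to parametrise everything by the auxiliary variable $t>1$ from \eqref{eq:nuord} and reduce both inequalities to an elementary comparison involving the geometric sum $1+t+\cdots+t^{d-2}$. First I would verify that the ordered condition $\beta>\beta_u$ indeed forces $t>1$: the equation $a=t^{d}/(t^{d}+q-1)$ with $a>1/q$ is equivalent to $t^{d}>1$. Expressed through $t$, one has $\nu_{\ord,1}=t^{d}/(t^{d}+q-1)$ and $\nu_{\ord,k}=1/(t^{d}+q-1)$ for $k\geq 2$, so all the fractional powers of $\nu_\ord$ appearing in $\rho_\ord$ collapse to powers of $t$; in particular $\nu_{\ord,1}^{(d-1)/d}/\nu_{\ord,k}^{(d-1)/d}=t^{d-1}$.

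The second step is to simplify $D:=\sum_{i',j'}\emm^{\beta\mathbf{1}\{i'=j'\}}(\nu_{\ord,i'}\nu_{\ord,j'})^{(d-1)/d}$ from \eqref{eq:rhostatistics}. Factoring out $Y^{2}:=\nu_{\ord,k}^{2(d-1)/d}$ for any fixed $k\geq 2$, one gets $D=Y^{2}\bigl[(\emm^\beta-1)(t^{2(d-1)}+q-1)+(t^{d-1}+q-1)^{2}\bigr]$. I would then substitute the defining relation $\emm^\beta-1=(t-1)(t^{d-1}+q-1)/(t^{d-1}-t)$ and collect terms. The key algebraic point is that the bracket telescopes: after pulling $(t^{d-1}+q-1)/(t^{d-1}-t)$ out in front, what remains is $(t-1)(t^{2(d-1)}+q-1)+(t^{d-1}+q-1)(t^{d-1}-t)$, which after expanding and grouping the $t^{2d-1}-t^{d}$ and $(q-1)(t^{d-1}-1)$ terms factors cleanly as $(t^{d-1}-1)(t^{d}+q-1)$. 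Substituting back, dividing $(\emm^\beta-1)\nu_{\ord,i}^{(d-2)/d}$ by $D$, and using $t^{d-1}-1=(t-1)(1+t+\cdots+t^{d-2})$ together with $t^{d-1}-t=t(t-1)(1+t+\cdots+t^{d-3})$ cancels all the $(t^{d}+q-1)^{\bullet}$ and $(t^{d-1}+q-1)$ factors and leaves
\begin{equation*}
\frac{p\,\rho_{\ord,11}}{\nu_{\ord,1}}=\frac{t^{d-2}}{1+t+\cdots+t^{d-2}},\qquad \frac{p\,\rho_{\ord,kk}}{\nu_{\ord,k}}=\frac{1}{1+t+\cdots+t^{d-2}}\text{ for }k\geq 2.
\end{equation*}

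The conclusion is then immediate: the denominator $1+t+\cdots+t^{d-2}$ is a sum of exactly $d-1$ positive terms, each lying in $[1,t^{d-2}]$, with every inequality strict for $t>1$. Hence $d-1<1+t+\cdots+t^{d-2}<(d-1)t^{d-2}$, which simultaneously yields $p\rho_{\ord,11}/\nu_{\ord,1}>1/(d-1)$ and $p\rho_{\ord,kk}/\nu_{\ord,k}<1/(d-1)$. The main obstacle is the bookkeeping in the second step: the telescoping looks miraculous at first glance, and identifying $(t-1)(t^{2(d-1)}+q-1)+(t^{d-1}+q-1)(t^{d-1}-t)$ as the product $(t^{d-1}-1)(t^{d}+q-1)$ is the computation I would verify most carefully before proceeding, since everything else is a routine cancellation once that identity is in hand.
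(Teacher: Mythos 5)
Your proof is correct. The paper imports this lemma from \cite{coja2023} without reproducing a proof, so there is no in-paper argument to compare against directly, but I checked your computation and it is sound. The telescoping identity $(t-1)(t^{2(d-1)}+q-1)+(t^{d-1}+q-1)(t^{d-1}-t)=(t^{d-1}-1)(t^{d}+q-1)$ verifies on expansion (both sides equal $t^{2d-1}-t^{d}+(q-1)t^{d-1}-(q-1)$), and the resulting formulas $p_1=t^{d-2}/(1+t+\cdots+t^{d-2})$ and $p_k=1/(1+t+\cdots+t^{d-2})$ are consistent with the expression $p_1=t^{d-1}(t-1)/(t^{d}-t)$ that the paper itself computes inside the proof of Lemma~\ref{lem:ordered-less-than-(d-1)^-3}. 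Your justification that $\beta>\beta_u$ forces $t>1$ (via $a>1/q\Leftrightarrow t^{d}>1$) is also correct, and once $t>1$ the strict sandwich $d-1<1+t+\cdots+t^{d-2}<(d-1)t^{d-2}$ delivers both inequalities immediately. This is a clean, self-contained route to the result.
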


Let $G=(V,E)$ be a graph. For a subset of edges $F\subseteq E$ and a vertex $v$, we let $F_v$ be the edges in $F$ incident to $v$. We will also consider  the largest component $\cC$ of the graph $(V,F)$, i.e. the component with the largest number of vertices (breaking ties arbitrarily). We say that $\cC$ avoids a path $P$ if there is no vertex $v\in \cC$ that belongs to $P$. The next lemma is the key technical step in our argument and will be used to show that, in the RC ordered phase, the largest component (which has size $\Omega(n)$ due to the supercriticality of colour 1, cf. Lemma~\ref{lem:percord})  is ``very close'' to every vertex $v$.

\begin{lemma}\label{lem:ordpath}
Let $d,q\geq 3$ be integers, $\beta\geq \beta_c$ be real and $p=1-\emm^{-\beta}$. For any constant $\eps>0$, the following holds for any integer $\ell$  satisfying $\ln \ln n\leq \ell\leq n^{1/6}$, whp over $G\sim\cG_{n,d}$.

For a vertex $v\in V(G)$, consider  the event \[\cA_{v}=\Big\{F\subseteq E(G) \,\Big|\,\begin{array}{c}\mbox{there exists a path of length $\ell$ from $v$ in $G$ which}
\\ \mbox{avoids the largest component of $(V\backslash \{v\},F\backslash F_v)$}\end{array}\Big\}.\]  
  Then, for the RC ordered phase of $G$,  
  $\pi_{G,\ord}\big(\bigcup_{v\in V(G)}\cA_{v}\big)\leq n d(d-1)^{\ell-1}(A+\eps)^\ell$ where $A=\max_{i\in [q]}\big\{ 1-(1-{ \sqrt{\hat\varphi_1})}\frac{\rho_{\ord,i1}}{\nu_{\ord,i}}\big\}$ with $\rho_{\ord},\nu_{\ord}$ as in \eqref{eq:nuord} and $\hat\varphi_1=\hat\varphi(p_1)$ as in \eqref{eq:phi} for $p_1=p\frac{\rho_{\ord,11}}{\nu_{\ord,1}}$.
\end{lemma}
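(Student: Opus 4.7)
The plan is to transfer to the ordered planted model and bound the expected number of good paths via a branching-process calculation. Applying Lemmas~\ref{lem:PottstoRC} and~\ref{lem:quietplanting} with $\cE=\bigcup_v\cA_v$, it suffices to bound $\Pr[\cE]$ for the planted pair $(\hat\G(\sord),\sord)$ after forming $\hat\F$ by $p$-percolating the monochromatic edges; by Lemma~\ref{lem:statistics}, we may additionally condition on $\|\nu^{\sord}-\nu_{\ord}\|_1+\|\rho^{\hat\G,\sord}-\rho_{\ord}\|_1\le \eps'$ for a small $\eps'>0$. Lemma~\ref{lem:percord} then gives $p_1:=p\rho_{\ord,11}/\nu_{\ord,1}>1/(d-1)$ while $p\rho_{\ord,ii}/\nu_{\ord,i}<1/(d-1)$ for $i\ge 2$, so under the monochromatic percolation color~$1$ is supercritical and all other colors are subcritical. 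Applying Lemma~\ref{lem:giantperc}(1) to the non-color-$1$ subgraphs and Lemma~\ref{lem:giantperc}(2) to color~$1$ shows that the giant $\cC_1$ of $(V\setminus\{v_0\},\hat\F\setminus\hat\F_{v_0})$ lies entirely within color~$1$; in particular, any path vertex $v_i$ with $c_i\ne 1$ avoids $\cC_1$ automatically.

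Next, I would bound $\Ex[N]$ where $N$ counts ordered distinct tuples $(v_0,\dots,v_\ell)$ that form a path in $\hat\G$ with every $v_i$ ($i\ge 1$) avoiding $\cC_1$; by Markov this upper bounds $\Pr[\bigcup_v\cA_v]$. I would iterate along the path: conditional on $c_{i-1}$, vertex $v_{i-1}$ has $d$ half-edges (for $i=1$) or $d-1$ half-edges (for $i\ge 2$) available, each leading via the planted random matching to a color-$j$ neighbor with conditional probability $\rho_{\ord,c_{i-1}j}/\nu_{\ord,c_{i-1}}$. For the extinction weight, Lemma~\ref{lem:giantperc}(2) is applied to the color-$1$ percolation subgraph (viewed via the ambient $d$-regular $\hat\G$ with effective per-edge keep probability $p_1$) with $S'=\{v_i:c_i=1,\ i\ge 1\}$, after first removing $v_0$ and the $\ell$ path edges from $\hat\F$; by monotonicity of the giant in the edge set this yields a valid upper bound, and each color-$1$ path vertex contributes extinction at most $\hat\varphi_1=(1-p_1+p_1\varphi_1)^{d-2}$. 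Distributing $\hat\varphi_1$ across the up to two incident path edges as $\sqrt{\hat\varphi_1}$ each (which remains an upper bound also at the endpoint $v_\ell$, since $\sqrt{\hat\varphi_1}\ge\hat\varphi_1$) gives the per-step factor
\[
\sum_j\frac{\rho_{\ord,c_{i-1}j}}{\nu_{\ord,c_{i-1}}}\bigl[\mathbf{1}_{j=1}\sqrt{\hat\varphi_1}+\mathbf{1}_{j\ne 1}\bigr]=1-(1-\sqrt{\hat\varphi_1})\frac{\rho_{\ord,c_{i-1}1}}{\nu_{\ord,c_{i-1}}}\le A.
\]
Multiplying over $\ell$ steps and the $n$ choices of $v_0$ yields $\Ex[N]\le n\cdot d\cdot(d-1)^{\ell-1}A^\ell\cdot(1+\eps)^{\ell+1}$, and the $(1+\eps)^{O(\ell)}$ slack from Lemma~\ref{lem:giantperc}(2) and from the statistics conditioning is absorbed into $(A+\eps)^\ell$.

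\textbf{Main obstacle.} The technical crux is justifying the factorization of the joint extinction event $\bigcap_{i:c_i=1}\{v_i\notin \cC_1\}$ across path vertices, since these events share the global percolation structure and any surviving color-$1$-to-color-$1$ path edge could in principle bridge color-$1$ path vertices into the giant collectively, invalidating a vertex-by-vertex estimate. Removing the $\ell$ path edges from $\hat\F$ before invoking Lemma~\ref{lem:giantperc}(2) sidesteps the path-edge bridging by monotonicity of the giant in the edge set, while the factorized extinction bound of Lemma~\ref{lem:giantperc}(2) handles the remaining joint dependence; matching the integer-regular-degree hypothesis of Lemma~\ref{lem:giantperc} is achieved by applying it to the ambient $d$-regular graph $\hat\G$ with effective per-edge keep probability $p_1$ rather than to the non-integer-degree color-$1$ subgraph directly.
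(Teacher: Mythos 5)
Your proposal follows essentially the same route as the paper's proof: transfer to the planted model via Lemmas~\ref{lem:quietplanting}, \ref{lem:PottstoRC}, and~\ref{lem:statistics}; reduce to bounding the expected number of coloured paths avoiding the giant; compute the path-colouring probability $\prod_k \rho_{\ord,c_{k-1}c_k}/\nu_{\ord,c_{k-1}}$ from the conditionally uniform planted matching; obtain the per-vertex extinction factor $\hat\varphi_1^{\ell_1}$ by applying Lemma~\ref{lem:giantperc}(2) to the colour-$1$ percolation subgraph after deleting $v_0$ and the path edges (and noting, as you do, that the giant of the edge-deleted graph is contained in that of the full graph, so that the event ``giant avoids $P$'' is monotone in the right direction); and then sum over colourings of the path.

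Where you genuinely diverge from the paper is in that last summation. The paper writes the colouring sum as a quadratic form $\sqrt{\nu_\ord}^{\,T}\hat M^\ell\sqrt{\nu_\ord}$ with a symmetrized transfer matrix $\hat M$, and establishes the entrywise bound $\hat M\sqrt{\nu_\ord}\le A\sqrt{\nu_\ord}$. You instead observe that the transfer sum telescopes directly, because the one-step expression $\sum_j (\rho_{\ord,c j}/\nu_{\ord,c})\bigl[\mathbf{1}_{j=1}\sqrt{\hat\varphi_1}+\mathbf{1}_{j\neq 1}\bigr]$ equals $1-(1-\sqrt{\hat\varphi_1})\rho_{\ord,c1}/\nu_{\ord,c}\le A$ \emph{uniformly in} the source colour $c$, so the inner sums can be peeled off from the last step inward. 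This is the same inequality as the paper's entrywise matrix bound (one can check it is literally $(\hat M\sqrt{\nu_\ord})_i/\sqrt{\nu_{\ord,i}}\le A$ for $i\neq 1$), but your derivation is more elementary, dispensing with the $D^{\pm 1/2}$ conjugation and the $1/\hat\varphi_1$ slack factor the paper has to absorb.

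One small inconsistency worth flagging: your justification says you distribute each colour-$1$ vertex's $\hat\varphi_1$ as $\sqrt{\hat\varphi_1}$ onto each of its two incident path edges, but the displayed per-step factor charges a single $\sqrt{\hat\varphi_1}$ only to the \emph{destination} colour $j$ and ignores the source's contribution. This does not invalidate anything — using only one $\sqrt{\hat\varphi_1}$ per colour-$1$ vertex is a weaker but still correct upper bound on $\hat\varphi_1^{\ell_1}$ — but the wording and the formula do not match; either charge $\sqrt{\hat\varphi_1}^{\mathbf{1}\{c_{i-1}=1\}}\sqrt{\hat\varphi_1}^{\mathbf{1}\{c_i=1\}}$ per step and absorb the extra factor $\le 1$, or simply say you bound each vertex's extinction by $\sqrt{\hat\varphi_1}$ rather than $\hat\varphi_1$. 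The remaining ingredients (the role of Lemma~\ref{lem:percord} in showing only colour $1$ is supercritical, the $(1+\eps)^{O(\ell)}$ slack from the statistics conditioning and from Lemma~\ref{lem:giantperc}, and the $O(n^{1/6})$ bound on the non-regular part of the degree sequence) all match the paper.
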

To parse the bound on $\pi_{G,\ord}\big(\bigcup_{v\in V(G)}\cA_{v}\big)$ note that the factor $n d(d-1)^{\ell-1}$ is just an upper bound on the total number of paths of length $\ell$. The factor $A^{\ell}$ controls the probability that a particular path $P$ avoids the largest component. The term $\hat\varphi_1$ in $A$ is the probability that the Galton-Watson process dies out from a vertex $v$ of the path that belongs to the supercritical color class 1 (note that $v$ has ``remaining'' degree $d-2$  other than its neighbours on the path, and that the relevant percolation parameter is given by $p_1$). The term $\frac{\rho_{\ord,i1}}{\nu_{\ord,i}}$ accounts for the frequency that we encounter vertices in the supercritical color class 1 along the path $P$, which follows from the definition of the planting model. The exact quantitative dependence of $A$ to $\hat\varphi_1$ and $\frac{\rho_{\ord,i1}}{\nu_{\ord,i}}$ follows by a more technical calculation involving eigenvalues. Later, in Lemma~\ref{lem:ordered-less-than-(d-1)^-3}, we will show that $A$ can be made less than $1/d^5$ by taking $q$ sufficiently large.

\begin{proof}[Proof of Lemma~\ref{lem:ordpath} (Sketch)]
From Lemmas~\ref{lem:quietplanting} and~\ref{lem:PottstoRC}, it suffices to show for the ordered planted graph $(\hat\G(\sord),\sord)$   that (for all $n$ sufficiently large)
\begin{equation}\label{eq:key124b}
\Pr\Big[\F\big(\hat\G(\sord),\sord\big)\in \bigcup_{v\in V(G)}\cA_{v}\Big]\leq nd\big(d-1)^{\ell-1}(A+\tfrac{\eps}{2}\big)^{\ell}.
\end{equation}
From this, we obtain by Lemma~\ref{lem:quietplanting}  that $\Pr\big[\F\big(\G,\SIGMA_{\G,\ord}\big)\in\bigcup_{v\in V}\cA_v\mid \G\big]\leq n\ln (n) d\big(d-1)^{\ell-1}(A+\tfrac{\eps}{2}\big)^{\ell}$; since  $\F\big(\G,\SIGMA_{\G,\ord})$ has distribution $\hat{\pi}_{{\G},\ord}$,  Lemma~\ref{lem:PottstoRC} yields that, whp over $\G$, it holds that $\piord(\bigcup_v\cA_v)\leq \hat{\pi}_{{\G},\ord} (\bigcup_v\cA_v)+\emm^{-\Omega(n)}\leq n d(d-1)^{\ell-1}(A+\eps)^\ell$.

By Lemma~\ref{lem:statistics}, we have $\big\|\nu^{\sord}-\nuord\big\|_1+\big\|\rho^{\hat\G(\sord),\sord}-\rhoord\big\|_1=O(n^{-1/3})$ w.p. $1-\emm^{-\Omega(n^{1/3})}$.  Condition on $\nu^{\sord}=\nu$ and $\rho^{\hat\G(\sord),\sord}=\rho$ where $\nu$ and $\rho$ satisfy $\|\nu-\nuord\|_1+\|\rho-\rhoord\|_1=O(n^{-1/3})$, and note that the pair $(\hat\G(\sord),\sord)$ is uniformly random from the pairs having these statistics. 

Let $P$ be an  arbitrary sequence of vertices $(v_0,v_1,\hdots,v_\ell)$, together with a $q$-colouring $\tau=(\tau_0,\hdots, \tau_\ell)$. 
 Let also $\xi$ be a set of paired half-edges that turn $v_0,\hdots,v_\ell$ into a path in that order. Let \(\E\) be the set of random edges in the graph before percolation. Denote by 
 \[\cE_{P,\tau,\xi}=\{\xi\subseteq \E,\ \hat\SIGMA_\ord(v_0)=\tau_0,\hdots,\  \hat\SIGMA_\ord(v_\ell)=\tau_\ell\}\] the event that $v_0,v_1,\hdots,v_\ell$ form a path in $\hat\G$ (in this order) using the  half-edges prescribed by $\xi$ and that they belong to the colour classes that $\tau$ prescribes. 
In Appendix~\ref{sec:E1E1}, we give an exact expression for $\Pr(\cE_{P,\tau,\xi})$ in terms of $\nu$ and $\rho$ which yields that
    \begin{equation}\label{eq:Ptauxi}
 \Pr(\cE_{P,\tau,\xi})\sim (dn)^{-\ell}\nu_{\ord,\tau_0}\prod^{\ell-1}_{k=0}\frac{\rho_{\ord, \tau_k\tau_{k+1}}}{\nu_{\ord,\tau_k}},
 \end{equation}
 where \(f(n)\sim g(n)\) denotes \(\frac{f(n)}{g(n)}\rightarrow 1\) as \(n\rightarrow \infty\).

Fix now an arbitrary vertex $v$ and let $P$ be an  arbitrary sequence $(v_0,v_1,\hdots,v_\ell)$ with $v_0=v$, together with a $q$-colouring $\tau=(\tau_0,\hdots, \tau_\ell)$ and pairing $\xi$ (as above). Condition on the event $\cE_{P,\tau,\xi}$. Let $\F$ be the edge set obtained by keeping each monochromatic edge with probability $p$, and  $\F_v$ be the set of these edges that are incident to $v$. Then, for the graph $\hat{\G}_v(\F):=(\V\backslash \{v\},\F\backslash \F_v)$, we show in Appendix~\ref{sec:E1E1} that  color 1 is supercritical and the others are subcritical (using Lemma~\ref{lem:percord}); by invoking then Lemma~\ref{lem:giantperc} to the supercritical color class 1, we further obtain that
\begin{equation}\label{eq:step13223}
\Pr\Big[\begin{array}{c}\mbox{the largest component of $\hat{\G}_v(\F)$}\\\mbox{avoids the $\tau$-coloured path $P$}\end{array}\,\Big|\, \cE_{P,\tau,\xi}\Big]\leq (1+\tfrac{\eps}{3})^{\ell} \hat\varphi^{\ell_1(\tau)}_1,
\end{equation}
where $\ell_1(\tau)$ is the number of vertices with colour 1 under $\tau$ and $\hat \varphi_1:=(p_1 \varphi_1+1-p_1)^{d-2}$ where $p_1=p\frac{\rho_{\ord,11}}{\nu_{\ord,1}}$ and $\varphi_1=\varphi(p_1)$ (as in \eqref{eq:phi}). Combining \eqref{eq:Ptauxi} and \eqref{eq:step13223} via a union bound (note that there are $\sim n^\ell$ sequences $P$ with $\ell+1$ vertices starting from $v$ and $d^2(d(d-1))^{\ell-1}$ ways to select half-edges $\xi$ to turn each of these into a path) yields that
\begin{equation}\label{eq:step13}
\Pr\Big[\begin{array}{c}\mbox{the largest component of $\hat{\G}_v(\F)$ avoids}\\\mbox{some $\tau$-coloured path from~$v$ in~$G$}\end{array}\Big]\leq (1+\tfrac{\eps}{3})^{\ell}d(d-1)^{\ell-1}\nu_{\ord,\tau_0}\prod^{\ell-1}_{k=0}\frac{\rho_{\ord, \tau_k\tau_{k+1}}}{\nu_{\ord,\tau_k}}\hat\varphi_1^{\mathbf{1}\{\tau_{k}=1\}}.
\end{equation}
At this stage, it remains to take a union bound over all possible colourings $\tau$ on $\ell+1$ vertices, i.e., sum the right-hand-side of \eqref{eq:step13} over all $\tau$. The resulting expression can be written more succinctly  in the form $(1+\tfrac\eps 3)d(d-1)^{l-1}\cdot\nu_\ord^T M^\ell 1$ where $ M$ is the $q\times q$ matrix whose $(i,j)$-entry is given by $\frac{\rho_{\ord,ij}}{\hat\nu_{\ord,i}}$ and $\hat\nu_\ord$ is the vector whose $i$-th entry is equal to $\nu_{\ord,1}/\hat\varphi_1$ if $i=1$ and $\nu_{\ord,i}$ if $i\in \{2,\hdots,q\}$. Note that $ M=D^{-1/2}\hat M D^{1/2}$ where $D$ is the $q\times q$ diagonal matrix with $\hat \nu_\ord$ on the diagonal and $\hat M$  is the symmetric matrix $\frac{\rho_{\ord,ij}}{(\hat\nu_{\ord,i}\hat\nu_{\ord,j})^{1/2}}$. In particular, we have that $\hat M$ and $M$ have the same eigenvalues, which are all real. Let  $\sqrt{\hat \nu_\ord}$ denote the vector whose entries are the square roots of the entries of $\hat \nu_\ord$. Then, we can write  
\begin{equation*}
\mbox{$\hat \nu_{\ord}^T  M^\ell 1=\sqrt{\hat \nu_{\ord}}^T \hat M^{\ell} \sqrt{\hat \nu_{\ord}}\leq \tfrac{1}{\hat \varphi_1}\sqrt{\nu_{\ord}}^T \hat M^{\ell} \sqrt{\nu_{\ord}}$}.
\end{equation*}
Now, we will show that entry-wise it holds that $\hat M \sqrt{\nu_{\ord}}\leq A \sqrt{\nu_{\ord}}$ where $A:= \max_{i\in [q]}\{1-\frac{(1-\sqrt{\hat\varphi_1})\rho_{\ord,i1}}{\nu_{\ord,i}}\}$ is as in the lemma statement. Note first that  $(\hat M \sqrt{\nu_{\ord}})_i=\sum_{j}\frac{\rho_{\ord,ij}}{(\hat\nu_{\ord,i}\hat\nu_{\ord,j})^{1/2}}(\nu_{\ord,j})^{1/2}$.  Since $\hat \nu_{\ord,j}=\nu_{\ord,j}$ for $j\neq 1$ and   $\sum_{j}\rho_{\ord,ij}=\nu_{\ord,i}$ for $i\in [q]$, we have that
\[( \hat M \sqrt{\nu_{\ord}})_1=\frac{\hat \varphi_1\, \rho_{\ord,11}} {(\nu_{\ord,1})^{1/2}}+\sum_{j\neq 1}\frac{\sqrt{\hat\varphi_1} \rho_{\ord,1j}}{(\nu_{\ord,1})^{1/2}}=\frac{(\hat\varphi_1-\sqrt{\hat\varphi_1})\rho_{\ord,11}}{(\nu_{\ord,1})^{1/2}}+\sqrt{\hat\varphi_1}(\nu_{\ord,1})^{1/2}\leq \sqrt{\hat\varphi_1} A(\nu_{\ord,1})^{1/2}.\]
For $i\neq 1$, we have 
\[( \hat M \sqrt{\nu_{\ord}})_i=\frac{\sqrt{\hat \varphi_1} \rho_{\ord,i1}}{(\nu_{\ord,i})^{1/2}}+\sum_{j\neq 1}\frac{ \rho_{\ord,ij}}{(\nu_{\ord,i})^{1/2}}=(\nu_{\ord,i})^{1/2}-\frac{(1-\sqrt{\hat\varphi_1})\rho_{\ord,i1}}{(\nu_{\ord,i})^{1/2}}\leq A(\nu_{\ord,i})^{1/2}.\]
This finishes the proof of $\hat M \sqrt{\nu_{\ord}}\leq A \sqrt{\nu_{\ord}}$, and therefore we can bound the left-hand-side of \eqref{eq:step13} with $d(d-1)^{\ell-1}(A+\tfrac{\eps}{2})^\ell$. Noting that the event in \eqref{eq:step13} is just $\Pr\Big[\F\big(\hat\G(\sord),\sord\big)\in \bigcup_{v\in V(G)}\cA_{v}\Big]$, 
\eqref{eq:key124b} follows by a union bound over $v$.
\end{proof}

We next bound the quantity $A$ appearing in Lemma~\ref{lem:ordpath} by a suitably small constant so that we can take a union bound over paths; it is for this step that we require $q\geq (5d)^5$. The proof is in  Appendix~\ref{sec:proof-lemma-18}.

\newcommand{\stateOrderedUpperBound}{Let $d\geq 3$, $q\geq (5d)^5$ and $\beta\geq \beta_c$. Then, for all $i\in [q]$, it holds that ${(1-\sqrt{\hat\varphi_1})}\frac{\rho_{\ord,i1}}{\nu_{\ord,i}}>1-\frac{1}{(d-1)^5}$.}
\begin{lemma}\label{lem:ordered-less-than-(d-1)^-3}
\stateOrderedUpperBound
\end{lemma}

\subsection{WSM within the ordered phase}\label{sec:wsm-ordered}

Next we prove WSM within the ordered phase. We first define a \textit{wired boundary} and show how it implies the WSM within the ordered phase, and then prove that it occurs with a good probability.

\begin{definition}
Let \(G = (V,E)\) be a graph, \(v\in V\), \(\ell\geq 1\) and let \(F\subseteq E\) be a RC configuration. Consider the $\ell$-neighbourhood $B_\ell(v)$ of $v$ in $G$. We say that \(S\subseteq V(B_\ell(v))\setminus\{v\}\) is a \emph{wired boundary around \(v\) in \(B_\ell(v)\)}, if

    \begin{itemize}
        \item \(S\) is a cut set separating \(v\) from the outside of the ball. That is, \(\cC_v\), the component of \(v\) in \(G\setminus S\), is disjoint from \(G\setminus B_\ell(v)\); and

        \item in the graph \((V\setminus V(\cC_{v}), \{e \in F : e\cap V(\cC_v) = \emptyset\})\), all vertices of \(S\) are in the same component.
    \end{itemize}

\end{definition}

\newcommand{\stateTVastmostWB}{Let \(d, q\geq 3\) be integers, \(\beta\geq \beta_c\) a real. Then whp over \(G\sim\cG_{n,d}\) the following holds.
    For any \(v\in V\), any edge \(e\) incident to \(v\), and any \(\ell \leq \frac{1}{2}\log_{d-1} n\),
    \[
    \big|\pi_{G,v,\ell}^+(1_e) - \pi_{G,\ord}(1_e)\big|\leq \pi_{G,\ord}\big(\mbox{\emph{no wired boundary around $v$ in  $B_\ell(v)$}}\big) + \emm^{-\Omega(n)}.
    \]}
\begin{lemma}\label{lem:wsm-implied-from-wired-boundary}
    \stateTVastmostWB
\end{lemma}
 To prove the lemma, we compare with \(\pi_G\). When \(\beta > \beta_c\), we obtain, from Corollary~\ref{cor:smallgraphb}, that $\left\|\pi_{G,\ord}-\pi_G\right\|_{\mathrm{TV}}=\emm^{-\Omega(n)}$.
Then we can bound \(|\pi_{G,v,\ell}^+(1_e) - \pi_G(1_e)|\) by the probability (under \(\pi_G\)) that there is no wired boundary around \(v\) in \(B_\ell(v)\): informally, this is because probability of \(1_e\) conditional on the wired boundary is the same under both \(\pi_G\) and \(\pi_{G,v,\ell}^+\), and due to the monotonicity of the RC distribution, the probability of the wired boundary under \(\pi_{G,v,\ell}^+\) is at least that under \(\pi_G\). For more details as well as the case when $\beta=\beta_c$ see the full proof of the Lemma is in the Appendix~\ref{sec:wired-boundary-proofs}.

By Lemma~\ref{lem:ordpath} all simple length-\(l\) paths from \(v\) intersect the largest component of \(G_{v,F}\), so we can construct a wired boundary $S$ by taking, for each length-\(\ell\) path from \(v\) in \(G\), the first vertex on the path belonging to the largest component of \(G_{v,F}\). We thus show the following lemma in Appendix~\ref{sec:wired-boundary-proofs}. 

\newcommand{\stateWBfromgiantpaths}{Let \(G = (V,E)\) be a graph,  let \(v\in V\) be an arbitrary vertex, and let \(\ell\geq 1\) an integer. For any \(F\subseteq E\), 
let 
$F_v$ be the set of edges in~$F$ that are incident to~$v$ and let
\(G_{v,F} := (V\setminus\{v\},F\setminus F_v)\). 
Suppose that all simple length-$\ell$ paths  from $v$ in $G$ intersect  the largest component of $G_{v,F}$.
    Then \(F\) has a wired boundary around \(v\) in \(B_\ell(v)\).}
\begin{lemma}\label{lem:no-long-path-outside-giant=>wired}
\stateWBfromgiantpaths
\end{lemma}

From these two lemmas we can now obtain WSM within the ordered phase.
\wsmordered*
\begin{proof}
By Lemma~\ref{lem:ordered-less-than-(d-1)^-3},  we have $A := \{\max_i (1 - (1-\sqrt{\hat\varphi_1}))\frac{\rho_{o,i1}}{\nu_{o,i}}\} < (d-1)^{-5}$.  Let $\eps > 0$ be a small constant (depending on~$d$) such that $A+\eps \leq (d-1)^{-(5+\eps)}$. Let $\delta := \frac{\eps}{4(4+\eps)}$.  By Lemma~\ref{lem:wsm-implied-from-wired-boundary}, 
 $
    \big|\pi_{G,v,\ell}^+(1_e) - \pi_{G,\ord}(1_e)\big|$
is at most   
 \[ \emm^{-\Omega(n)} + 1-  \pi_{G,\ord}(\exists  \text{ a wired boundary around }v\text{ in } B_\ell(v))  .\]
By Lemma~\ref{lem:no-long-path-outside-giant=>wired}, this is at most
 \[ \emm^{-\Omega(n)} + 1-  \pi_{G,\ord}(  \text{ 
 all simple length-$\ell$ paths from $v$ in $G$ intersect the largest component of $G_{v,F}$ }) .\]
By Lemma~\ref{lem:ordpath}  
this is at most $\emm^{-\Omega(n)} 
+ n d (d-1)^{\ell-1}(A+\eps/2)^\ell
$. So whp over $G\sim\cG_{n,d}$ it holds for every vertex $v$ and edge \(e\) incident to it that
    \[
    \big|\pi_{G,\ord}(1_e)-\pi_{G,v,\ell}^+(1_e)\big|\leq \emm^{-\Omega(n)} + \frac{nd}{d-1} (d-1)^{-(4+\eps)\ell} 
    \leq 
    %e^{-\Omega(n)} + \frac{nd}{d-1} \frac{1}{n^{(4+\eps)(1/2-\delta)}}
   % = 
    \emm^{-\Omega(n)} + \frac{1}{n^{1+\eps/4}}
    \]
which is at most $ \frac{1}{100m}$ for all $n$ large enough.
\end{proof}

\printbibliography

\begin{appendix}

\section{Remaining Proofs for Section~\ref{sec:prelims}}\label{sec:prelimsproof}
In this section, we give the proof of  Lemma~\ref{lem:quietplanting}, restated here.
\begin{Lemquiet}
\statelemquiteplanting
\end{Lemquiet}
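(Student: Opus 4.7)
The plan is to first write the planted law as a tilting of the product measure $\Pr[\G=G]\cdot\mu_{G,\dis}(\sigma)$, and then combine Markov's inequality with the concentration of $\Zdis$ provided by Lemma~\ref{lem:smallgraph}. I focus on part (1); part (2) is strictly analogous with $\dis$ replaced by $\ord$ everywhere and the $\beta<\beta_u'$ clause of Lemma~\ref{lem:smallgraph} replaced by the $\beta>\beta_u$ one.

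The first step is a short density calculation. Multiplying \eqref{eq:Gsigma} and \eqref{eq:disord} and using the identities $\emm^{\beta m_G(\sigma)} = Z_G\mu_G(\sigma)$ and $\mathbf{1}\{\sigma\in\Sdis\}\mu_G(\sigma) = \mu_G(\Sdis)\mu_{G,\dis}(\sigma)$, the normalising constant $\Ex[\emm^{\beta m_\G(\sigma)}]$ cancels and one is left with
\[
\Pr\big[(\hat\G(\sdis),\sdis)=(G,\sigma)\big] = \Pr[\G=G]\cdot W(G)\cdot \mu_{G,\dis}(\sigma),\qquad W(G):=\frac{\Zdis(G)}{\Ex[\Zdis(\G)]}.
\]
Writing $P_\G := \Pr[(\G,\SIGMA_{\G,\dis})\in\cE\mid\G]$ and summing over $(G,\sigma)\in\cE$ then gives the master identity $\Pr[(\hat\G(\sdis),\sdis)\in\cE] = \Ex[W(\G)\,P_\G]\leq g(n)$, where the inequality is the hypothesis of the lemma.

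For the second step, I would apply Lemma~\ref{lem:smallgraph} at the auxiliary scale $\sqrt{f(n)}=o(1)$ to produce an event $\cF$ of probability $1-o(1)$ on which $W(\G)\geq\sqrt{f(n)}$. Restricting the master identity to $\cF$ yields $\Ex[P_\G\mathbf{1}_\cF]\leq g(n)/\sqrt{f(n)}$, and Markov's inequality at threshold $g(n)/f(n)$ then gives $\Pr[P_\G\mathbf{1}_\cF\geq g(n)/f(n)]\leq \sqrt{f(n)}=o(1)$. A union bound with $\Pr[\cF^{c}]=o(1)$ produces $\Pr[P_\G\geq g(n)/f(n)]=o(1)$, which is exactly the claimed whp statement.

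The main conceptual point (rather than an obstacle) is the choice of auxiliary scale $\sqrt{f(n)}$: running Markov at the same scale $f(n)$ that appears in the conclusion would give only a trivial bound $\leq 1$. Taking any intermediate $o(1)$ function $h$ with $f=o(h)$ (of which $\sqrt{f}$ is the simplest) balances the lower bound on $W(\G)$ from Lemma~\ref{lem:smallgraph} against the Markov loss on $P_\G$, and is the key to turning the average bound $\Ex[W(\G)P_\G]\leq g(n)$ into a pointwise whp bound on $P_\G$. Once this tilting/Markov framework is set up, the rest is routine bookkeeping.
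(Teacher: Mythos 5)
Your proof is correct and is essentially the same argument as the paper's: you have packaged the same computation as an explicit tilting identity $\Pr[(\hat\G(\sdis),\sdis)\in\cE]=\Ex[W(\G)\,P_\G]$ plus Markov's inequality, whereas the paper works directly with aggregate weights over $\cQ\setminus\cQ'$ (which is exactly your event $\cF\cap\{P_\G\geq g/f\}$). The key step — invoking Lemma~\ref{lem:smallgraph} at the intermediate scale $\sqrt{f(n)}$ to trade the concentration of $\Zdis$ against the average bound $g(n)$ — is identical in both.
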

\begin{proof}[Proof of Lemma~\ref{lem:quietplanting}]
 We give the proof for the first item, the proof for the other is completely analogous. Fix arbitrary positive $\beta\leq \beta_c$  and let $\cQ$ be the set consisting of $G\in \cG_{n,d}$  that satisfy \[\Zdis(G)\geq  (f(n))^{1/2}\, \Ex[\Zdis(\G)]\] and note that by Lemma~\ref{lem:smallgraph} it holds that $\Pr[\G\in \cQ]=1-o(1)$.
Moreover, let $\cQ'$ be the set of $G\in \cG_{n,d}$  such that  the set of configurations $S_{\mathrm{Bad}}(G):=\big\{\sigma\in \Sdis\,\big|\, (G,\sigma)\in \mathcal{E}\big\}$ has aggregate weight $Z_{\mathrm{Bad}}(G)=\sum_{\sigma\in S_{\mathrm{Bad}}(G)}\emm^{\beta m_G(\sigma)}$ less than $\frac{g(n)}{f(n)}\Zdis(G)$. Note that for any graph $G\in \cQ'$ we have $\mu_{G,{\dis}}\big((G,\sigma)\in \mathcal{E}\big)=\frac{Z_{\mathrm{Bad}}(G)}{\Zdis(G)}\leq \frac{g(n)}{f(n)}$, so 
to finish the proof, it suffices to show that $\Pr[\G\in  \cQ']=1-o(1)$.

To do this, consider the aggregate weight $W:=\sum_{(G,\sigma)\in  \cE} \emm^{\beta m_G(\sigma)}\mathbf{1}\{\sigma\in \Sdis\}$  over  pairs $(G,\sigma)$ that  belong to $\cE$. By restricting to graphs $G$ in $\cQ\backslash \cQ'$, we have the  lower bound
\begin{equation*}
    W\geq \sum_{G\in \cQ\backslash \cQ'} \sum_{\sigma\in S_{\mathrm{Bad}}(G)}\emm^{\beta m_G(\sigma)}\geq \frac{g(n)}{f(n)}\sum_{G\in \cQ\backslash \cQ'}\Zdis(G).
\end{equation*}
For graphs $G\in\cQ$ we have $\Zdis(G)\geq (f(n))^{1/2}\,\Ex[\Zdis(\G)]$, and therefore we further have
\begin{equation}\label{eq:lowa35}
W\geq \frac{g(n)}{\sqrt{f(n)}} \big|\cQ\backslash \cQ'\big|\ \Ex\big[\Zdis(\G)\big]=\frac{g(n)}{\sqrt{f(n)}} \big|\cQ\backslash \cQ'\big|\,\frac{\sum_{G\in \cG_{n,d}}\sum_{\sigma\in [q]^n} \emm^{\beta m_G(\sigma)}\mathbf{1}\{\sigma\in \Sdis\}}{\big|\cG_{n,d}\big|}
\end{equation}
From the lemma assumption that $\Pr\big[\big(\hat\G(\sdis),\sdis\big)\in  \cE\big]\leq g(n)$, the  definition of $\big(\hat\G\big(\sdis\big),\sdis\big)$ yields that
\begin{equation*}
g(n)\geq \Pr\big[\big(\hat\G(\sdis),\sdis\big)\in  \cE\big]=\frac{\sum_{(G,\sigma)\in \cE}\emm^{\beta m_G(\sigma)}\mathbf{1}\{\sigma\in \Sdis\}}{\sum_{G\in \cG_{n,d}}\sum_{\sigma\in [q]^n} \emm^{\beta m_G(\sigma)}\mathbf{1}\{\sigma\in \Sdis\}}.
\end{equation*}
Combining this with \eqref{eq:lowa35} and the definition of $W$, we obtain $\Pr[\G\in \cQ\backslash \cQ']=\frac{|\cQ\backslash \cQ'|}{|\cG_{n,d}|}\leq (f(n))^{1/2}
=o(1)$. Since $\Pr[\G\in \cQ]=1-o(1)$ from Lemma~\ref{lem:smallgraph}, it follows that 
\[\Pr[G\in \cQ']\geq \Pr[\G\in \cQ\cap \cQ']= \Pr[\G\in \cQ]-\Pr[\G\in \cQ\backslash \cQ']\geq 1-o(1).\qedhere\]
\end{proof}

\section{Remaining Proofs for Section~\ref{sec:RCmixing}}\label{sec:f4ttvrv}
In this section we prove Lemma~\ref{lem:PottstoRC}. The following lemma will be handy.
\begin{lemma}\label{lem:percstep}
Let $d,q\geq3$ be integers and $\beta>0$ be real.  Then, whp over $G\sim\cG_{n,d}$, with $\SIGMA\sim \mu_G$ and $\F=\F(G,\SIGMA)$,  we have  
\begin{equation*}
\begin{aligned}
\mbox{ for $\beta\leq \beta_c$:}&\quad\Pr[\F\in \Omega_{G,\dis}\mid \SIGMA\in \Sdis]=1-\emm^{-\Omega(n)} \quad\mbox{and}\quad\Pr[\F\in \Omega_{G,\ord}\mid \SIGMA\in \Sdis]=\emm^{-\Omega(n)},\\
\mbox{ for $\beta\geq \beta_c$:}&\quad\Pr[\F\in \Omega_{G,\ord}\mid \SIGMA\in \Sord]=1-\emm^{-\Omega(n)} \quad\mbox{and}\quad\Pr[\F\in \Omega_{G,\dis}\mid \SIGMA\in \Sord]=\emm^{-\Omega(n)}.
\end{aligned}
\end{equation*}
\end{lemma}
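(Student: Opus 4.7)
The plan is to carry out the concentration estimates in the planted model, where Lemma~\ref{lem:statistics} controls the half-edge statistics, and then transfer to the conditional Gibbs measure via Lemma~\ref{lem:quietplanting}. I would treat the disordered case $\beta\leq\beta_c$; the ordered case is symmetric.

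First I would work in the disordered planted distribution $(\hat\G(\sdis),\sdis)$. By Lemma~\ref{lem:statistics}, with probability $1-\emm^{-\Omega(n)}$ the edge-statistics matrix $\rho^{\hat\G(\sdis),\sdis}$ lies within $o(1)$ of $\rhodis$ in $\ell_1$. Conditional on this, the number $M$ of monochromatic edges satisfies $M=\tfrac{dn}{2}\sum_{i}\rho_{\dis,ii}+o(n)$. The set $\F=\F(\hat\G(\sdis),\sdis)$ is obtained by keeping each of these $M$ edges independently with probability $p$, so $|\F|\sim\mathrm{Bin}(M,p)$ given $M$; a Chernoff bound then gives $|\F|=n\mdis(\beta)+o(n)$ with probability $1-\emm^{-\Omega(n)}$, using $\mdis(\beta)=\tfrac{p\,d}{2}\sum_{i}\rho_{\dis,ii}$.

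Next I would compare with the thresholds in Definition~\ref{def:RCorddis}. The closed form $\mdis(\beta)=\tfrac{d}{2}\tfrac{\emm^\beta-1}{\emm^\beta+q-1}$ is strictly increasing in $\beta$ (immediate from its derivative), so $\mdis(\beta)\leq\mdis(\beta_c)$ for $\beta\leq\beta_c$. Hence in the planted model $|\F|$ falls into $\Omega_{G,\dis}$ since $\mdis(\beta_c)<\mdis(\beta_c)+\rho/4$, and misses $\Omega_{G,\ord}$ since $\mdis(\beta_c)=\mord(\beta_c)-\rho<\mord(\beta_c)-\rho/4$; both bad events carry planted probability $\emm^{-\Omega(n)}$. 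To lift this to the conditional Gibbs measure I would apply Lemma~\ref{lem:quietplanting} to the event
\[
\cE:=\big\{(G,\sigma)\in\cG_{n,d}\times[q]^n:\ \sigma\in\Sdis,\ \Pr_{\F}[\F(G,\sigma)\notin\Omega_{G,\dis}]>\emm^{-cn}\big\}
\]
for a suitably small $c>0$. A Markov inequality on the planted-model probability of $\{\F\notin\Omega_{G,\dis}\}$ gives $\Pr[(\hat\G(\sdis),\sdis)\in\cE]=\emm^{-\Omega(n)}$, and Lemma~\ref{lem:quietplanting} then yields $\Pr[\SIGMA_{\G,\dis}\in\cE\mid\G]=\emm^{-\Omega(n)}$ whp over $\G$. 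Splitting the percolation randomness according to $\cE$ and $\cE^c$ and integrating gives $\Pr[\F\notin\Omega_{G,\dis}\mid\SIGMA\in\Sdis]=\emm^{-\Omega(n)}$ whp over $G$; the same argument with $\Omega_{G,\ord}$ replacing $\Omega_{G,\dis}$ delivers the second disordered bound, and the ordered case is symmetric using $(\hat\G(\sord),\sord)$ and the analogous comparison $\mord(\beta)\geq\mord(\beta_c)$ for $\beta\geq\beta_c$.

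The main obstacle I anticipate is verifying this monotonicity for $\mord$. Its expression depends on the fixed-point $t$ of~\eqref{eq:nuord} and is considerably more intricate than the closed form for $\mdis$, so the cleanest options are either a direct calculus argument differentiating $\mord$ along the fixed-point branch, or invoking the standard ferromagnetic FKG principle that raising $\beta$ can only increase the expected monochromatic-edge density, even after restricting to the ordered component. Given this, only a qualitative lower bound $\mord(\beta)>\mord(\beta_c)-\rho/4$ uniformly in $\beta\geq\beta_c$ is needed, which is much weaker than full strict monotonicity.
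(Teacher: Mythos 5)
Your proposal follows essentially the same route as the paper: use Lemma~\ref{lem:statistics} to get concentration of the edge statistics in the planted model, transfer to the conditional Gibbs measure via Lemma~\ref{lem:quietplanting}, apply a Chernoff bound to the percolation step, and then compare against the thresholds in Definition~\ref{def:RCorddis} using $\mdis(\beta)\leq\mdis(\beta_c)$. The one organizational difference is where the percolation step is handled. The paper first transfers the $(G,\sigma)$-measurable event ``$\rho^{G,\sigma}$ is close to $\rhodis$'' to the real graph, and then does the Chernoff bound directly in the Gibbs world, which keeps the transferred event clean. You instead do everything inside the planted model and then need the extra Markov-inequality step to convert the conditional percolation probability $Y(G,\sigma)=\Pr_{\F}[\F(G,\sigma)\notin\Omega_{G,\dis}]$ into a $(G,\sigma)$-measurable event that Lemma~\ref{lem:quietplanting} can handle. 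Your version works — the Markov step is correct and the exponents match up — it is just a slightly longer path to the same place.

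Your flagged obstacle about the monotonicity of $\mord(\beta)$ is a fair one: the paper's proof treats only the disordered case explicitly and dismisses the ordered case as ``analogous,'' which silently uses that $\mord(\beta)\geq\mord(\beta_c)$ for $\beta\geq\beta_c$. This is not as immediate as the corresponding inequality for $\mdis$, since $\mord$ is defined through the fixed point $t$ of~\eqref{eq:nuord}. The intended route (consistent with what appears later in Lemma~\ref{lem:ordered-less-than-(d-1)^-3}) is to parametrize the ordered branch by $t$ with $t\geq t_c$ for $\beta\geq\beta_c$ (and $t$ increasing in $\beta$ along this branch), then check via the algebraic expression for $\mord$ in terms of $t$ that it is nondecreasing for $t\geq t_c$. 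An FKG-type argument does not obviously apply because one is conditioning on the ordered phase, so the calculus route is the safer one; this is a small but real gap that your writeup correctly identifies.
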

\begin{proof}
We prove the first equation, the second is analogous. Fix $\beta\leq \beta_c$. By Lemma~\ref{lem:statistics}, for the disordered planted model $\Pr\big[\big\|\rho^{\hat\G(\sdis),\sdis}-\rhodis\big\|_1=o(1)\big]=1-\emm^{-\Omega(n)}$. So Lemma~\ref{lem:quietplanting} yields that $\Pr\Big[\big\|\rho^{\G,\SIGMA_{\G,\dis}}-\rhodis\big\|_1=o(1)\,\Big| \,\G\Big]=1-\emm^{-\Omega(n)}$ whp over $\G$. 
Therefore, for such $\G$, conditioned on $\SIGMA\in \Sdis$, the number of monochromatic edges under $\SIGMA$ is at most $n\big(\tfrac{d}{2}\sum_{i\in [q]}\rho_{\dis,ii}\big)+n\rho / 4 = n\frac{1}{p}\mdis(\beta) +n\rho / 4$ w.p. $1-\emm^{-\Omega(n)}$. Therefore after the percolation step, by standard Chernoff bounds, we have $|\F|\leq n\mdis(\beta) + n\rho / 4$ w.p. $1-\emm^{-\Omega(n)}$. Since $\mdis(\beta)\leq \mdis(\beta_c)$, by the Definition~\ref{def:RCorddis} of $\Omegadis,\Omegaord$  we conclude that $\Pr[\F\in \Omegadis\mid \SIGMA\in \Sdis]= 1-\emm^{-\Omega(n)}$ and $\Pr[\F\in \Omegaord\mid \SIGMA\in \Sdis]= \emm^{-\Omega(n)}$.
\end{proof}
We are now ready to prove Lemma~\ref{lem:PottstoRC}, restated here for convenience. Recall that, for a graph $G$, $\hat\pi_{G,\dis}$ is the distribution on subsets of $E(G)$ obtained by first sampling $\SIGMA_{G,\dis}\sim \mu_{G,\dis}$ and then keeping each monochromatic edge with probability $p=1-\emm^{-\beta}$ (and similarly for $\hat\pi_{G,\ord}$). 
\begin{LemPottstoRC}
\statelemPottstoRC
\end{LemPottstoRC}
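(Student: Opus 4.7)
The plan is to use the standard Potts-to-RC coupling: first draw $\SIGMA\sim\mu_G$, and then independently keep each monochromatic edge of $\SIGMA$ with probability $p=1-\emm^{-\beta}$ to obtain $\F\sim\pi_G$. Under this coupling, the conditional distribution of $\F$ given $\SIGMA\in\Sdis$ is exactly $\hat\pi_{G,\dis}$, and similarly $\F\mid\SIGMA\in\Sord$ has distribution $\hat\pi_{G,\ord}$; these identities are what let us pass between the Potts and RC conditioning.

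For part~(i), I would decompose $\pi_G(\cA\cap\Omega_{G,\dis})$ by partitioning the Potts configuration space into three pieces: $\Sdis$, $\tSord$, and the complementary ``atypical'' remainder. The $\Sdis$-piece contributes at most $\mu_G(\Sdis)\,\hat\pi_{G,\dis}(\cA)$ directly from the coupling. The $\tSord$-piece is handled by Lemma~\ref{lem:percstep}, which (in its $\beta\geq\beta_c$ direction, relevant at the critical point) says that percolating from an ordered Potts configuration falls in $\Omega_{G,\dis}$ only with probability $\emm^{-\Omega(n)}$; the color-permutation symmetry of the percolation step and of $\Omega_{G,\dis}$ transfers this bound from $\Sord$ to the full union $\tSord$. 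The atypical remainder has $\mu_G$-mass $\emm^{-\Omega(n)}$ by Lemma~\ref{lem:smallgraph}, so it contributes $\emm^{-\Omega(n)}$ as well.

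For the denominator, restricting to the $\Sdis$-piece and applying Lemma~\ref{lem:percstep} in the $\beta\leq\beta_c$ direction gives $\pi_G(\Omega_{G,\dis})\geq \mu_G(\Sdis)(1-\emm^{-\Omega(n)})$. Dividing the bounds on numerator and denominator yields $\pi_{G,\dis}(\cA)\leq \hat\pi_{G,\dis}(\cA)\bigl(1+\emm^{-\Omega(n)}\bigr)+\emm^{-\Omega(n)}/\mu_G(\Sdis)$. The only delicate point, and the main obstacle, is the critical temperature $\beta=\beta_c$: for $\beta<\beta_c$ one has $\mu_G(\Sdis)=1-\emm^{-\Omega(n)}$, but at $\beta=\beta_c$ only the weaker bound $\mu_G(\Sdis)\geq 1/\ln\ln n$ from Lemma~\ref{lem:smallgraph} is available. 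This nevertheless suffices, since $\emm^{-\Omega(n)}\cdot \ln\ln n=\emm^{-\Omega(n)}$ absorbs the loss.

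Part~(ii) proceeds by the symmetric argument, with $\tSord$ playing the role formerly played by $\Sdis$: the color-permutation invariance of $\pi_G$, $\Omega_{G,\ord}$, and the percolation step ensures that the $\F$-marginal is the same under conditioning on any of the $q$ permuted copies of $\Sord$, so the $\tSord$-piece in the numerator contributes exactly $\mu_G(\tSord)\,\hat\pi_{G,\ord}(\cA)$. The two directions of Lemma~\ref{lem:percstep} now swap roles: its $\beta\geq\beta_c$ half handles the denominator $\pi_G(\Omega_{G,\ord})\geq \mu_G(\tSord)(1-\emm^{-\Omega(n)})$, while its $\beta\leq\beta_c$ half is used at $\beta=\beta_c$ to kill the $\Sdis$-piece of the numerator (for $\beta>\beta_c$ one already has $\mu_G(\Sdis)=\emm^{-\Omega(n)}$ directly from Lemma~\ref{lem:smallgraph}). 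The same $\mu_G(\tSord)\geq 1/\ln\ln n$ critical-point trick then finishes the proof.
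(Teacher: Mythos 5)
Your proposal is correct and follows essentially the same route as the paper: decompose $\pi_G(\cA\cap\Omega_{G,\dis})$ by conditioning on the Potts phase of $\SIGMA$, bound each piece using Lemma~\ref{lem:percstep} and Lemma~\ref{lem:smallgraph}, lower-bound $\pi_G(\Omega_{G,\dis})$ via the $\Sdis$-piece, and divide. The one place you are slightly more careful than the paper's writeup is in partitioning by $\tSord$ (rather than $\Sord$) and invoking color-permutation symmetry to transfer the percolation bound; the paper's displayed inequality is stated with $\Sord$ and implicitly relies on the same symmetry observation at $\beta=\beta_c$, so your version makes this step explicit rather than different.
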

\begin{proof}[Proof of Lemma~\ref{lem:PottstoRC}]
We prove the first claim, the proof of the second is analogous. We consider first the case $\beta\leq \beta_c$.  We have the lower bound
\begin{equation}\label{eq:perct5tgtg}
\pi_{G}(\Omega_{G,\dis})\geq \Pr[\SIGMA\in \Sdis]\, \Pr[\F\in \Omega_{G,\dis}\mid \SIGMA\in \Sdis].
\end{equation}
Similarly by conditioning on whether $\SIGMA\in \Sdis$, $\SIGMA\in \Sord$, or $\SIGMA\notin \Sdis\cup \Sord$, we have the inequality
\begin{equation}\label{eq:4tg4t5ygyy}
\pi_{G}(\cA \cap  \Omega_{G,\dis})\leq \Pr[\SIGMA\in \Sdis]\, \Pr[\F\in \cA \mid \SIGMA\in \Sdis]+\Pr[\F\in \Omega_{G,\dis}\mid \SIGMA\in \Sord]+\Pr[\SIGMA\notin \Sdis\cup \Sord].
\end{equation}
For $\beta\leq \beta_c$,  by Lemma~\ref{lem:smallgraph} we have $\Pr[\SIGMA\in \Sdis]\geq \frac{1}{n}$ and $\Pr[\SIGMA\notin \Sdis\cup \Sord]=\emm^{-\Omega(n)}$; by Lemma~\ref{lem:percstep} we also have 
\begin{equation}\label{eq:perc123}
\Pr[\F\in \Omega_{G,\dis}\mid \SIGMA\in \Sdis]=1-\emm^{-\Omega(n)} \quad\mbox{and}\quad\Pr[\F\in \Omega_{G,\dis}\mid \SIGMA\in \Sord]=\emm^{-\Omega(n)}.
\end{equation}
Combining the bounds from \eqref{eq:perct5tgtg}, \eqref{eq:4tg4t5ygyy} and \eqref{eq:perc123}, we obtain that 
\[\pi_{G,\dis}(\cA)= \frac{ \pi_{\G}(\cA \cap  \Omega_{G,\dis})}{ \pi_{G}(\Omega_{G,\dis})}\leq \Pr[\F\in \cA \mid \SIGMA\in \Sdis]+\emm^{-\Omega(n)}=\hat\pi_{\G,\dis}(\cA)+\emm^{-\Omega(n)}.\qedhere\]
\end{proof}

We will also use the following corollary later on.
\begin{corollary}\label{cor:boundary-of-phases-are-unlikely}
Let $d,q\geq 3$ be integers and $\beta>0$ be real. Then, for all $\delta>0$, the following hold whp over $G\sim \cG_{n,d}$.

If $\beta\leq \beta_c$, for $\F_\dis\sim \pi_{G,\dis}$, it holds that $\Pr(|\F_\dis|\geq n\mdis+n\delta)=\emm^{-\Omega(n)}$; if $\beta\geq \beta_c$, for $\F_\ord\sim \pi_{G,\ord}$, it holds that $\Pr(|\F_\ord|\leq n\mord-n\delta)=\emm^{-\Omega(n)}$.
\end{corollary}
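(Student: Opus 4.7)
The plan is to mirror the two--step concentration argument already carried out in Lemma~\ref{lem:percstep} (Potts sample $+$ percolation step) and  combine it with Lemma~\ref{lem:PottstoRC} to transfer the bound from $\hat\pi_{G,\dis}$ to $\pi_{G,\dis}$ (and from $\hat\pi_{G,\ord}$ to $\pi_{G,\ord}$). Only the disordered case is discussed in detail; the ordered case is completely analogous.

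Fix $\delta>0$ and consider $\beta\leq \beta_c$. Set $\cA:=\{F\subseteq E(G)\colon |F|\geq n\mdis+n\delta\}$. By Lemma~\ref{lem:PottstoRC}, whp over $G\sim \cG_{n,d}$,
\[
\pi_{G,\dis}(\cA)\leq \hat\pi_{G,\dis}(\cA)+\emm^{-\Omega(n)},
\]
so it suffices to bound $\hat\pi_{G,\dis}(\cA)$. Recall that a draw from $\hat\pi_{G,\dis}$ is obtained by first sampling $\SIGMA\sim \mu_{G,\dis}$ and then keeping each monochromatic edge independently with probability $p=1-\emm^{-\beta}$. The number of monochromatic edges under $\SIGMA$ equals $\tfrac{dn}{2}\sum_{i\in[q]}\rho^{G,\SIGMA}_{ii}$.

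For the first step, Lemma~\ref{lem:statistics} applied to the disordered planted model shows that $\big\|\rho^{\hat\G(\sdis),\sdis}-\rhodis\big\|_1\leq \delta'$ with probability $1-\emm^{-\Omega(n)}$, for any preassigned constant $\delta'>0$. Invoking Lemma~\ref{lem:quietplanting} with $f(n)=1/\ln n$ and $g(n)=\emm^{-\Omega(n)}$, this transfers to the true model: whp over $\G$,
\[
\Pr\!\left[\big\|\rho^{\G,\SIGMA_{\G,\dis}}-\rhodis\big\|_1\leq \delta'\,\Big|\,\G\right]=1-\emm^{-\Omega(n)}.
\]
Choosing $\delta'$ small enough in terms of $d,q,\beta,\delta$, this implies that the number of monochromatic edges under $\SIGMA$ is at most $\tfrac{n}{p}\mdis+\tfrac{n\delta}{2}$ with probability $1-\emm^{-\Omega(n)}$ (using $\mdis=p\tfrac{d}{2}\sum_i\rho_{\dis,ii}$).

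For the second step, conditional on the number $M$ of monochromatic edges being at most $\tfrac{n}{p}\mdis+\tfrac{n\delta}{2}$, the cardinality $|\F|$ after percolation is stochastically dominated by $\mathrm{Bin}(M,p)$, whose mean is at most $n\mdis+\tfrac{pn\delta}{2}\leq n\mdis+\tfrac{n\delta}{2}$. A standard Chernoff bound then gives $|\F|\leq n\mdis+n\delta$ with probability $1-\emm^{-\Omega(n)}$. Taking a union bound over the two steps yields $\hat\pi_{G,\dis}(\cA)=\emm^{-\Omega(n)}$, which completes the argument.

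The ordered case is handled by the same template, using the ordered planted model $(\hat\G(\sord),\sord)$ and Lemma~\ref{lem:statistics}(ii) to concentrate $\rho^{\G,\SIGMA_{\G,\ord}}$ around $\rhoord$, followed by a lower-tail Chernoff bound on the percolation step, so that whp $|\F_\ord|\geq n\mord-n\delta$ with probability $1-\emm^{-\Omega(n)}$ (and \emph{a fortiori} $|\F_\ord|\geq n\mdis-n\delta$ as stated, since $\mord>\mdis$ throughout the relevant range). There is no serious obstacle here: all ingredients are already available from Lemmas~\ref{lem:statistics}, \ref{lem:quietplanting}, and \ref{lem:PottstoRC}, and the routine Chernoff step controls the percolation.
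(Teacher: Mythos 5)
Your proof is correct and follows essentially the same route as the paper's: the paper simply cites ``the percolation step analysis from Lemma~\ref{lem:percstep}'' plus Lemma~\ref{lem:PottstoRC}, while you unpack that analysis explicitly via Lemmas~\ref{lem:statistics} and~\ref{lem:quietplanting} followed by a Chernoff bound — same ingredients, more detail. One remark worth making: you correctly observe that the ordered clause as written has $\mdis$ where one would expect $\mord$ (it is the $\mord$ bound that is actually needed later in Lemma~\ref{lem:wsm-implied-from-wired-boundary}), and your argument naturally delivers the stronger $\Pr(|\F_\ord|\leq n\mord-n\delta)=\emm^{-\Omega(n)}$, which subsumes the stated version.
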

\begin{proof}
This follows by applying Lemma~\ref{lem:PottstoRC} and using the percolation step analysis from Lemma~\ref{lem:percstep}. In particular, the latter gives that, for $\hat\F_\dis\sim \hat\pi_{G,\dis}$, it holds that $\Pr(|\hat\F_\dis|\geq n\mdis+n\delta)=\emm^{-\Omega(n)}$. Therefore, from item (i) of  Lemma~\ref{lem:PottstoRC}, we have $\Pr(|\F_\dis|\geq n\mdis+n\delta)=\emm^{-\Omega(n)}$ as wanted. The proof for $\beta\geq \beta_c$ is analogous.
\end{proof}

\section{Remaining Proofs for Section~\ref{sec:disordered-regime}}\label{sec:appendix_proof4.3}

In this section we prove Lemma~\ref{lem:2l-decay-bound-for-twopath}. Throughout this proof, a \emph{nontrivial boundary component of $\eta$} (where \(\eta\) is a boundary condition induced by \(F\setminus B_\ell(v)\) for some \(F\subseteq E\)) means a a component of $(V,F\backslash E(B_\ell(v)))$ containing at least two vertices of $S_\ell(v)$. 

\begin{Lemdecay}
\statelemdecay
\end{Lemdecay}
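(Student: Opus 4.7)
The strategy follows the approach of Blanca and Gheissari~\cite{BG}, extending it to $\beta \leq \beta_c$ by exploiting the percolation interpretation of the free-boundary RC measure on a tree. We compare $\pi^-_{G,v,\ell}$ and $\pi_{G,\dis}$ restricted to the edge $e$ by conditioning on the configuration outside $B_\ell(v)$.

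\textbf{Step 1 (near-tree balls).} First I would show that whp over $\G\sim\cG_{n,d}$, for every $v\in V$ the ball $B_\ell(v)$ contains at most a constant number $r_0 = r_0(\delta)$ of edges beyond a spanning tree. A standard first-moment estimate in the configuration model gives that the expected number of vertices $v$ whose ball has $r$ excess edges is $O\bigl(n\cdot((d-1)^{2\ell}/n)^r\bigr) = O(n^{1-2r\delta})$, which is $o(1)$ once $r>1/(2\delta)$. These $O(1)$ excess edges only affect the subsequent analysis by a bounded multiplicative factor absorbed into the constant $C$.

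\textbf{Step 2 (shattered boundary).} I would then apply Lemma~\ref{lem:dispath} with $\epsilon = \delta/2$ to obtain a constant $K = K(d,q,\beta,\delta)$ such that the event $\cA_v$ that a configuration $F$ is $K$-shattered at distance $\ell$ from $v$ satisfies $\pi_{G,\dis}(\cA_v) \geq 1 - 1/n^3$. In particular, $\pi_{G,\dis}(1_e) = \pi_{G,\dis}(1_e\mid\cA_v) + O(1/n^3)$.

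\textbf{Step 3 (two-path comparison).} The heart of the argument is the uniform bound: for every boundary condition $\eta$ on $E\setminus E(B_\ell(v))$ that induces a $K$-shattered boundary at distance $\ell$,
\[
\bigl|\pi^{\eta}_{G,v,\ell}(1_e) - \pi^-_{G,v,\ell}(1_e)\bigr| \leq C\,\hat p^{2\ell}.
\]
On the (near-)tree $B_\ell(v)$ with free boundary every edge is a cut-edge, so the RC measure coincides with independent $\mathrm{Bernoulli}(\hat p)$ percolation on the edges. This is the step where the restriction $\beta<\beta_u$ in~\cite{BG} is replaced by $\beta\leq\beta_c$: the identity relies only on the cut-edge update probability $\hat p$ and not on any tree-uniqueness estimate. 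A $K$-shattered $\eta$ differs from a fully free boundary only through the at most $K$ nontrivial boundary components, and I would show by an explicit Radon--Nikodym calculation that each such component shifts $\pi^\eta(1_e)$ away from $\pi^-(1_e)$ only when the open edges in $B_\ell(v)$ connect $v$ to two distinct wired boundary vertices $u_1,u_2\in S_\ell(v)$ — an event requiring two edge-disjoint paths in the tree of length $\ell$ each, hence probability at most $\hat p^{2\ell}$. A union bound over the $O(K^2)$ pairs of wired vertices then gives the claim.

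\textbf{Step 4 (combining).} By the law of total probability applied to $\eta \sim \pi_{G,\dis}(\cdot\mid\cA_v)$,
\[
\pi_{G,\dis}(1_e\mid\cA_v) = \Ex_{\eta}\bigl[\pi^{\eta}_{G,v,\ell}(1_e)\bigr] = \pi^-_{G,v,\ell}(1_e) + O(\hat p^{2\ell}),
\]
and combining this with Step 2 yields the desired bound.

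\textbf{Main obstacle.} The principal technical difficulty is Step 3: making the two-path estimate precise and uniform in $\eta$ without using tree-uniqueness on the infinite $d$-regular tree (which would restrict us to $\beta<\beta_u$). The key insight unlocking the extension up to $\beta_c$ is the Bernoulli$(\hat p)$ description of the free-boundary RC measure on a tree, which eliminates the need for uniqueness estimates. The remaining work is careful bookkeeping: tracking how each of the at most $K$ wired components perturbs the RC partition function through its contribution to the cluster count, and verifying that any net shift in the marginal of $e$ is caused by the existence of two open paths from $v$ into the wired boundary (a single path can be "redirected" by the other boundary component in a coupling that preserves the marginal of $e$). A secondary issue — handling the $O(1)$ excess edges from Step~1 — is easily absorbed into $C$ since they alter the RC measure on $B_\ell(v)$ by only a bounded factor depending on $d,q,\beta,\delta$.
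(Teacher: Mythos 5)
Your proposal follows essentially the same route as the paper's proof: reduce to $K$-shattered boundaries via Lemma~\ref{lem:dispath}, use that $B_\ell(v)$ is within $O(1)$ edges of a tree, exploit that the free-boundary RC measure on a tree is i.i.d.\ Bernoulli$(\hat p)$ percolation (the insight that replaces the uniqueness estimates from \cite{BG}), and control the shift caused by a $K$-shattered boundary $\eta$ by a two-disjoint-paths event whose probability is $O(\hat p^{2\ell})$ after a bounded Radon--Nikodym change of measure against the free distribution. The one place where the paper does materially less work than your sketch suggests is Step~3: rather than re-deriving the two-path structure from scratch, the paper cites \cite[Lemma~5.3]{BG} (which bounds the marginal TV by $\pi^\eta_{G,v,\ell}(\Upsilon^\eta_{G,v,\ell})$, an event asking for two disjoint wired connections at \emph{every} intermediate sphere $S_i(v)$, $1\le i\le\ell$) and \cite[Eq.~(5.4)]{BG} (which converts $\Upsilon^\eta$ into a supremum over pairs of path sequences with explicit combinatorial prefactor $K^2(2L)^{2L}(2^L)^{2L}$); your ``union bound over $O(K^2)$ pairs of wired vertices'' underestimates the combinatorics once the $O(L)$ excess edges are in play, since paths can reroute through them and the paper/BG track this via path \emph{sequences} split at the excess-edge levels $d_1<\dots<d_k$. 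This is fixable by invoking the BG lemmas directly (as the paper does) rather than reproving them, and the rest of your argument — in particular the bound on the cumulative-weight ratio coming from the at most $K+L$ extra component merges — is exactly the paper's argument.
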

\begin{proof}[Proof of Lemma~\ref{lem:2l-decay-bound-for-twopath}]
    First of all, it is well-known that whp \(G\sim\cG_{n,d}\) is \emph{locally treelike} (see e.g. \cite[Fact 2.3]{BG}), i.e. that there exists a constant \(L = L(\delta, d)\) such that for all \(v\), we can remove at most \(L\) edges from \(B_\ell(v)\) to make it a tree. Second, by Lemma~\ref{lem:dispath}, there exists \(K = K(d,q,\beta,\delta)\) such that whp over \(G\sim\cG_{n,d}\), with probability \(\geq 1-n^{-3}\), for all \(v\), a random disordered configuration is \(K\)-shattered at distance \(\ell\) from \(v\). Thus from now on we assume \(G\) satisfies these two properties.

    Fix a vertex \(v\) and an edge \(e\) incident to it. With probability \(\leq n^{-3}\), a disordered configuration is not \(K\)-shattered at distance \(\ell\) from \(v\), which gives the \(n^{-3}\) term in the bound. Now assume a configuration is \(K\)-shattered and let \(\eta\) be a boundary condition on \(B_\ell(v)\). To finish the proof, it is sufficient to show there is such \(C\) such that \( \TV{\pi_{G,v,\ell}^-(1_e)-\pi_{G,v,\ell}^\eta(1_e)} \leq C \hat p^{2l}\).

    For the case \(\beta < \beta_u\), this bound was proven in \cite[Proposition 3.3]{BG}, however parts of their proof use estimates that only hold in uniqueness. We are able to extend their bound for all \(\beta\) by closely following the start of their argument and then using different estimates which continue to hold  beyond the uniqueness threshold. 
    
    The starting point for the proof is two results from \cite{BG}: Lemma 5.3 and equation~(5.4).
    Lemma 5.3 bounds the TV-distance on marginals on \(e\) by \(\pi_{G,v,\ell}^\eta(\Upsilon_{G, v,\ell}^\eta)\), where \(\Upsilon_{G,v,\ell}^\eta\) is the event such that a configuration \(\F\) satisfies the following: for all \(1\leq i\leq \ell\), there are two \emph{distinct} vertices \(u_1,u_2\in S_i(v)\) and two corresponding paths \(\gamma_{i,1},\gamma_{i,2}\) such that for both \(j\in\{1,2\}\):

    \begin{itemize}
        \item \(\gamma_{i,j}\) is a path of edges in \(\F\) from \(u_j\) to a vertex of \(S_\ell(v)\) belonging to a nontrivial boundary component of \(\eta\), and
        \item vertices of \(\gamma_{i,j}\) are contained in \(\bigcup_{i\leq i' \leq\ell} S_{i'}(v)\).
    \end{itemize}

    Then in the proof of \cite[Proposition~3.3]{BG}, \(\pi_{G,v,\ell}^\eta(\Upsilon_{G,v,\ell}^\eta)\) is further bound by a probability of an event roughly corresponding to two disjoint paths from a neighbour of \(v\) to \(S_\ell(v)\). In order to define this event, we need to introduce more notation.

    Since \(B_\ell(v)\) is \(L\)-treelike, there is a set \(E_\cE\subseteq E(B_\ell(v))\) of at most \(L\) edges such that removing \(E_\cE\) makes the ball a tree. Thus there are also \(k\leq 2L\) distances \(1\leq d_1<\dots<d_k\leq\ell\) such that \(S_{d_i}(v)\) contains a vertex from \(V(E_\cE)\). Also denote \(d_0 = 1, d_{k+1} = \ell\). The following bound is shown in \cite[Equation (5.4)]{BG}:
    \begin{equation}\label{eq:bound-on-two-path-from-BG}
        \pi_{G,v,\ell}^\eta(\Upsilon_{G,v,\ell}^\eta) \leq K^2 (2L)^{2L} (2^L)^{2L} \sup_{(\Gamma;\Gamma')} \pi^\eta_{G,v,\ell}(\Gamma\cup \Gamma' \in\cdot),
    \end{equation}
    where the supremum is over all pairs of path sequences \((\Gamma;\Gamma') = (\gamma_0,\dots,\gamma_k; \gamma_0',\dots,\gamma_k')\) such that for each \(i=0,\dots,k\), \(\gamma_i\), \(\gamma_i'\) are disjoint paths contained in \(\bigcup_{d_i < j < d_{i+1}} S_j(v)\), such that they can be both extended, by edges in \(\bigcup_{j > d_i}^\ell\) to a path to a nontrivial boundary component of \(\eta\) (note that \(\gamma_i,\gamma'_i\) are allowed to be empty). 
    
    So,  it is enough to show that  for all such \((\Gamma;\Gamma')\), \(\pi^\eta_{G,v,\ell}(\Gamma\cup \Gamma' \in\cdot)\leq C\hat p^{2l}\) for some constant $C>0$. Let \((\Gamma;\Gamma')\) be any such pair of two path sequences. Consider the distribution on the ball~$B_\ell(v)$ with the free boundary and condition further on all excess edges in \(E_\cE\) being out; for brevity denote this distribution by~\(\tilde\pi\). Since removing \(E_\cE\) from the ball makes it a tree, \(\tilde\pi\) is the RC model on a tree, which is the same distribution as performing i.i.d. edge percolation with probability \(\hat p\). Furthermore, \(\Gamma\cup\Gamma'\) does not contain any edges in \(E_\cE\), thus it is an event with positive probability under \(\tilde\pi\). Also, \(\Gamma\cup\Gamma'\) contains at least \(\sum_{i=0}^k 2[d_{i+1}-d_i-2] \geq 2\ell - 8L\) edges, therefore \(\tilde\pi(\Gamma\cup\Gamma')\leq\hat p^{2\ell - 8L}\).

    Since \(\hat p^{-8L}\) is a constant, as a final step it suffices to show that \(\frac{\pi^\eta_{G,v,\ell}(\Gamma\cup\Gamma'\in\cdot)}{\tilde\pi(\Gamma\cup\Gamma'\in\cdot)}\) is bounded by a constant (not depending on \((\Gamma;\Gamma')\)).     Consider any configuration \(A\subseteq E(B_\ell(v))\). Since \(|E_\cE|\leq L\), \(\eta\) is \(K\)-shattered, and adding an edge can decrease the number of components by at most one, we get that \(|A\setminus E_\cE| \leq |A| \leq |A\setminus E_\cE| + L\), and the number of components in the ball for configuration \(A\) w.r.t. boundary condition \(\eta\) differ by at most \(K + L\) from the number of components for configuration \(A\setminus E_\cE\) w.r.t. to the free boundary condition. Thus the weights of \(A\) w.r.t. \(\eta\) and of \(A\setminus E_\cE\) w.r.t. to \(-\) differ by a factor of at most \(q^{L+K} [\max(\emm^\beta-1, (\emm^\beta - 1)^{-1})]^L\).

    Given that there are at most \(2^{|E_\cE|}\leq 2^L\) configurations \(A\subseteq E(B_\ell(v))\) agreeing on \(A\setminus E_\cE\), we get that the cumulative weights of the event ``\(\Gamma\cup\Gamma'\) are all in'' differ by a constant factor (independent of \((\Gamma;\Gamma')\)) when w.r.t. \(\eta\), and w.r.t. \(-\) when restricted to all edges in \(E_\cE\) being out, and so do their respective partition functions.    It hence follows that \(\frac{\pi^\eta_{G,v,\ell}(\Gamma\cup\Gamma'\in\cdot)}{\tilde\pi(\Gamma\cup\Gamma'\in\cdot)}\) is bounded by a constant, which concludes the proof.
\end{proof}

\section{Proof of Lemma~\ref{lem:giantperc}}
\label{sec:appendix_proof5.1}

In this section we give the proof of Lemma~\ref{lem:giantperc}. We first consider the case where $\vec d=(d_1,\hdots,d_n)$ is the degree sequence of a $d$-regular graph (i.e., $d_i=d$ for all $i$). For simplicity, in this uniform setting we denote the percolation model with $m = pdn/2 + o(n)$ edges by $\tilde\G_p$. The following bounds were shown in~\cite{coja2023}. 

\begin{lemma}[{\cite[Proposition 5.4]{coja2023}}]\label{lem:giantperc-regular}
Let $d\geq 3$ be an integer, $p\in (\frac{1}{d-1},1)$, and $m = pdn / 2 + o(n)$. Then, for any $\delta>0$, there exists a constant \(M > 0\), such that with probability $1-\emm^{-\Omega(n)}$ over $\tilde\G_p$, it holds that $\Pr[\cC_1(\tilde\G_p)=(\chi\pm \delta)n]\geq 1-\emm^{-\Omega(n)}$  where $\chi$ is as in \eqref{eq:phi}. Moreover, \(\sum_{i\geq 2} |\cC_i(\tilde\G_p)|^2 \leq Mn\).
\end{lemma}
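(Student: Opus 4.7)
The statement is a classical supercritical percolation result for random regular graphs, and my plan is to follow the Galton--Watson exploration approach in the spirit of Janson--{\L}uczak and Nachmias--Peres. As a first step, I would replace the ``exact-edge'' model $\tilde\G_p$ by Bernoulli$(p)$-percolation on a uniformly random $d$-regular graph via a contiguity argument: the total number of retained edges in the Bernoulli model is $\Bin(dn/2,p)$, which concentrates around $pdn/2$ with $O(\sqrt n)$ fluctuations; conditioning on the count equalling $m$ recovers $\tilde\G_p$, so any event of probability $1-\emm^{-\Omega(n)}$ in the Bernoulli model transfers to $\tilde\G_p$ with only a polynomial loss.

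Once edge retention is independent, I would analyse the component of a fixed vertex $v$ via a BFS that simultaneously reveals half-edge pairings and percolation indicators. As long as $o(n)$ half-edges have been exposed, the residual matching is asymptotically uniform, so each non-root explored vertex has offspring distribution stochastically close to $\Bin(d-1,p)$ while the root has $\Bin(d,p)$. This couples the exploration with a Galton--Watson tree whose offspring-generating equation is precisely $\varphi=(1-\varphi+p\varphi)^{d-1}$ from \eqref{eq:phi}, and whose root belongs to the giant with probability $\chi=1-(p\varphi+1-p)^d$. For a cut-off $K=K(n)\to\infty$ with $K=o(\sqrt n)$, standard branching-process large-deviation estimates give that the probability that $v$ lies in a component of size $\geq K$ equals $\chi+o(1)$, with exponentially small (in $K$) deviation.

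The next step is to upgrade the first-moment identity $\Ex[Y_K]=(\chi+o(1))n$, where $Y_K$ counts vertices in components of size $\geq K$, to concentration with failure probability $\emm^{-\Omega(n)}$. I would use an edge-switching Doob martingale: swapping a single half-edge pair perturbs $Y_K$ by at most $O(K)$, so an Azuma--Hoeffding bound applied to the $dn/2$ pairings (with $K$ chosen to be a large constant) yields $|Y_K-\Ex[Y_K]|\leq \delta n/2$ with probability $1-\emm^{-\Omega(n)}$. To go from ``$Y_K$ is concentrated'' to ``there is a unique component of size $(\chi\pm\delta)n$'', I would invoke a sprinkling argument: write $p=p_1+p_2$ with $p_2$ small, apply the previous analysis to the $p_1$-graph, and use the $p_2$-edges to show that any two disjoint pre-giants of linear size almost surely become connected, merging into one giant of size $(\chi\pm\delta)n$.

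For the sum-of-squares bound I would use the identity $\sum_i|\cC_i|^2=\sum_v|\cC(v)|$. The Galton--Watson tree conditioned on extinction has offspring mean strictly less than one (Harris's lemma), hence exponential tails on the component size, giving $\Ex[|\cC(v)|\,\mathbf{1}\{v\notin\cC_1\}]=O(1)$ per vertex and therefore $\Ex\big[\sum_{i\geq 2}|\cC_i|^2\big]=O(n)$; the same edge-switching martingale upgrades this to an almost-sure bound $\sum_{i\geq 2}|\cC_i|^2\leq Mn$ with failure probability $\emm^{-\Omega(n)}$. The main technical obstacle is achieving the $\emm^{-\Omega(n)}$ rate uniformly: naive bounded-differences with $K=\log^2 n$ yields only $\emm^{-\Omega(n/\mathrm{polylog}(n))}$, so sharpening to the stated rate requires either keeping $K$ of constant size (at the cost of a controllable bias in $\Ex[Y_K]$), or invoking more refined large-deviation estimates for configuration-model percolation; this delicate balance between the cut-off, the martingale differences, and the sprinkling step is where the analysis is least mechanical.
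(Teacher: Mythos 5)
The paper does not prove this lemma at all: it is imported verbatim as \cite[Proposition 5.4]{coja2023}, so there is no internal proof to compare against, and your proposal has to stand on its own. Most of your plan for the first assertion (transfer from the exact-edge model to Bernoulli percolation, Galton--Watson coupling of the BFS exploration, concentration of $Y_K$ for a constant cut-off $K$, and sprinkling to merge linear pre-giants) is the standard route and is workable; note only that for general $m=pdn/2+o(n)$ the conditioning cost is $\Pr[\Bin(dn/2,p)=m]^{-1}=\emm^{o(n)}$ rather than polynomial, which still preserves the $\emm^{-\Omega(n)}$ rate but needs to be said.

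The genuine gap is in your treatment of $\sum_{i\geq 2}|\cC_i|^2$. You claim that ``the same edge-switching martingale upgrades'' the first-moment bound $\Ex[\sum_{i\geq 2}|\cC_i|^2]=O(n)$ to a bound holding with probability $1-\emm^{-\Omega(n)}$, but this functional is not boundedly Lipschitz under single switchings: deleting one edge can split the giant into two components of linear size, both of which then enter the sum over $i\geq 2$, changing the functional by $\Omega(n^2)$; even on the event that all non-giant components have size $O(n^{2/3})$, a single switching that merges two such components changes the sum by $\Theta(n^{4/3})$, and Azuma--Hoeffding with increments $n^{4/3}$ over $\Theta(n)$ steps gives concentration only at scale $n^{11/6}\gg n$. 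So the concentration step for the sum of squares --- which is precisely the part of the lemma that is used downstream (via $\cC_2\leq Mn^{2/3}$, etc.) --- is not established by your argument. One needs instead a direct exploration/counting argument showing that, with failure probability $\emm^{-\Omega(n)}$, the number of vertices in non-giant components of size $k$ decays geometrically in $k$ (uniformly up to a truncation level), together with the absence of intermediate-size components; this is the delicate content of the cited Proposition 5.4 and cannot be recovered from bounded differences.
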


We use Lemma~\ref{lem:giantperc-regular} to infer bounds for the component sizes for slightly non-regular degree sequences $\vec d$.
\begin{lemma}\label{lem:graph-with-almost-regular-deg-sequence-has-good-components}
    Let \(d\geq 3\) be an integer, \(p \in (1/(d-1), 1)\), \(\delta, \epsilon > 0\). Suppose \(S\subseteq[n]\) satisfies \(|S| = O(n^{1/6})\) and \(\vec d = (d_1,\dots, d_n)\) is a degree sequence such that \(d_i = d\) for \(i\neq S\) and \(d_i\in [0,d]\) for \(i\in S\). Then, there exists a constant \(M(d, p,\delta) > 0\), such that for \(m = \frac{1}{2}dpn + o(n)\), it holds that
    \begin{equation*}
    \Pr\big(\mbox{$|\cC_1(\tilde\G_{n,\vec d,m})| \geq (\chi \pm \delta) n$ and $|\cC_2(\tilde\G_{n,\vec d,m})| \leq M n^{2/3}$}\big) \geq 1 - \emm^{-\Omega(n)}.
    \end{equation*}
\end{lemma}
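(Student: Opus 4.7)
The plan is to reduce to the fully $d$-regular case (Lemma~\ref{lem:giantperc-regular}) by coupling $\tilde\G_{n, \vec d, m}$ with the $d$-regular exact-edge model $\tilde\G_{n, \vec d^*, m^*}$, where $\vec d^* := (d,\ldots,d)$ and $m^* := m + \lceil p\Delta\rceil$ with $\Delta := dn - \sum_i d_i$, so that $\Delta \leq d|S| = O(n^{1/6})$. The idea is to view $\vec d^*$ as $\vec d$ augmented by $\Delta$ ``phantom'' half-edges attached to vertices of $S$, and to sample $\tilde\G_{n, \vec d^*, m^*}$ by the standard procedure (choose $2m^*$ half-edges uniformly from the $dn$ available and match them uniformly). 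Deleting the edges that touch phantom half-edges (at most $\Delta = O(n^{1/6})$ such edges) gives a graph $G_0$ on degree sequence $\vec d$ with a random edge count $m''$; a Chernoff bound yields $|m'' - m| = O(n^{1/6})$ with probability $1 - \emm^{-\Omega(n)}$. Since, conditional on $m''$, the graph $G_0$ is uniform over graphs with degree sequence $\vec d$ and $m''$ edges, i.e., distributed as $\tilde\G_{n, \vec d, m''}$, a short random resampling of $O(n^{1/6})$ further edges incident to $S$ converts $G_0$ into a sample of $\tilde\G_{n, \vec d, m}$. The net effect is that $\tilde\G_{n, \vec d, m}$ and $\tilde\G_{n, \vec d^*, m^*}$ are coupled to differ in $O(n^{1/6})$ edges, all incident to $S$.

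With this coupling, apply Lemma~\ref{lem:giantperc-regular} to $\tilde\G_{n, \vec d^*, m^*}$: with probability $1 - \emm^{-\Omega(n)}$ the giant $\cC_1^*$ has size $(\chi \pm \delta/2) n$ and $\sum_{i \geq 2} |\cC_i^*|^2 \leq M n$, in particular $|\cC_2^*| \leq \sqrt{M}\, n^{1/2}$. To transfer this to $\tilde\G_{n, \vec d, m}$, observe that every edge changed by the coupling touches $S$. Hence the subgraph of $\cC_1^*$ induced on $V \setminus S$ is untouched by the coupling and has size $\geq (\chi - \delta/2)n - |S| \geq (\chi - \delta)n$ for $n$ large, so it remains connected in $\tilde\G_{n, \vec d, m}$, giving $|\cC_1| \geq (\chi - \delta) n$. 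The upper bound $|\cC_1| \leq (\chi + \delta) n$ follows because the modifications add at most $|S| + O(n^{1/6}) = O(n^{1/6})$ extra vertices to the giant. For $|\cC_2|$, edge deletions may split components and edge additions may merge them, but only $O(n^{1/6})$ components can be disturbed, each of size at most $|\cC_2^*| + |S| = O(n^{1/2}) \leq M n^{2/3}$ for $n$ large.

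The main obstacle is ensuring that the coupling yields the stated failure probability $\emm^{-\Omega(n)}$, rather than only high probability. The natural ``clean'' version of the coupling---where no phantom is paired with another phantom---holds only with probability $1 - O(\Delta^2/(dn)) = 1 - O(n^{-2/3})$, which falls far short of the exponential target. Fortunately, cleanness is not needed: even when phantom-to-phantom pairings occur, the coupling still realizes $\tilde\G_{n, \vec d, m}$ and $\tilde\G_{n, \vec d^*, m^*}$ as differing in $O(n^{1/6})$ $S$-incident edges, and the perturbation analysis above only uses this edit distance together with the locality of the modifications. Thus the exponential concentration is inherited directly from Lemma~\ref{lem:giantperc-regular} applied to the auxiliary $d$-regular model, while the $O(n^{1/6})$ coupling discrepancies are absorbed in the additive slack in $\delta$ and in the gap between the $n^{1/2}$ bound from Lemma~\ref{lem:giantperc-regular} and the weaker $M n^{2/3}$ bound in the statement.
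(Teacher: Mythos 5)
Your coupling goes in the non-monotone direction---from a $d$-regular supermodel you \emph{delete} $O(n^{1/6})$ phantom-incident edges---and this is where the argument fails. The claim that ``the subgraph of $\cC_1^*$ induced on $V\setminus S$ is untouched $\ldots$ so it remains connected in $\tilde\G_{n,\vec d,m}$'' is unjustified: the induced subgraph of $\cC_1^*$ on $V\setminus S$ need not be connected \emph{even in $\tilde\G_{n,\vec d^*,m^*}$}, since deleting $O(n^{1/6})$ vertices (equivalently, the $O(n^{1/6})$ $S$-incident edges) from a connected graph can in principle shatter it into many pieces. Lemma~\ref{lem:giantperc-regular} gives you no control over how $\cC_1^*$ fragments under such deletions, so the lower bound $|\cC_1|\geq(\chi-\delta)n$ does not follow; you would need an additional expansion-type statement for the giant of supercritical percolation, which your argument does not supply. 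The $|\cC_2|$ bound inherits the same defect: a piece shaved off $\cC_1^*$ by the deletions could be far larger than $|\cC_2^*|+|S|$, so ``each of size at most $|\cC_2^*|+|S|=O(n^{1/2})$'' is not valid. There is also a secondary gap: the ``short random resampling of $O(n^{1/6})$ further edges incident to $S$'' to move from $m''$ to $m$ edges is neither shown to be confinable to $S$-incident edges nor to yield an exact sample from $\tilde\G_{n,\vec d,m}$, which would independently invalidate the claim that all coupling discrepancies touch $S$.

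The paper avoids all of this by reducing in the monotone direction. Conditioning on $k$, the number of $S$-incident edges, it identifies $\tilde\G_{n,\vec d,m}\setminus S$ (obtained by \emph{removing} $S$ and its edges) with the $d$-regular exact-edge model on $n-|S|$ vertices and $m-k$ edges by a symmetry argument, and applies Lemma~\ref{lem:giantperc-regular} there. Transferring back to $\tilde\G_{n,\vec d,m}$ then only \emph{adds} $O(n^{1/6})$ vertices and edges, which can only merge components; merging at most $O(n^{1/6})$ components of size $O(n^{1/2})$ gives the $O(n^{2/3})$ slack in both the $|\cC_1|$ and $|\cC_2|$ bounds. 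That monotonicity is the missing ingredient in your proposal: deletions can shrink a giant, additions cannot.
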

\begin{proof}
    Observe that the graph $\tilde\G_{n,\vec d,m}$, conditional on the number of percolated edges from vertices of \(S\) being (some) integer \(k\),  the graph \(\tilde\G_{n,\vec d,m}\setminus S\) is distributed as an exact (\(d\)-regular) edge model with \(n-|S|\) vertices and \(m - k\) edges -- as every set of \(m - k\) edges (submatchings) is equally likely by symmetry. Since \(|S| = O(n^{1/6})\), we trivially have that \(k = O(n^{1/6})\) as well. Thus, for any such \(k\), it holds that  \(\frac{m - k}{dn/2} = p + o(1) - O(n^{-5/6}) > \frac{1}{d-1}\) for sufficiently large $n$. Furthermore, since \(\chi(p)\) is continuous on \((\tfrac{1}{d-1}, 1)\),  it also holds that  \(\chi' := \chi(\frac{m - k}{dn/2}) = \chi(p) \pm \delta / 3\). 
    Applying therefore Lemma~\ref{lem:giantperc-regular} to \(\tilde\G_{n,\vec d,m}\setminus S\), we get that with probability \(1-\emm^{-\Omega(n)}\), its largest component has size  \((\chi'\pm \delta/3)(n - |S|) = (\chi \pm \delta)n\) for \(n\) large enough, and hence so does \(\tilde\G_{n,\vec d,m}\). 

    Moreover, by the second part of Lemma~\ref{lem:giantperc-regular}, the second largest component of $\tilde\G_{n,\vec d,m}\setminus S$  has size $O(n^{1/2})$.  Since \(\tilde\G_{n,\vec d,m}\) has at most \(O(n^{1/6})\) more edges than \(\tilde\G_{n,\vec d,m} \setminus S\),  the non-linear components can get up connected to form a component with size at most \(O(n^{1/2} n^{1/6}) = O(n^{2/3})\). 
\end{proof}

We will also use the following lemma.
\begin{lemma}\label{lem:constant-number-at-rth-gen-unlikely}
Consider a supercritical Galton-Watson process and let $Z_r$ be the size of the surviving population at generation $r$. Then, for any fixed integer $K>0$, $\Pr(Z_r\geq K\mid Z_r>0)= 1-o_r(1)$.
\end{lemma}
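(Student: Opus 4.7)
The plan is to reduce the claim to the classical extinction/explosion dichotomy for Galton--Watson processes. Let $q\in[0,1)$ denote the extinction probability; the supercriticality assumption gives $q<1$. I would begin by invoking the standard fact that, with probability one, exactly one of the following holds: either $Z_r=0$ for some (and hence all sufficiently large) $r$, or $Z_r\to\infty$. This can be justified either through the generating-function characterization of $q$ as the smallest fixed point of $E[s^{Z_1}]$, or more directly by observing that whenever $Z_r$ is bounded above by some constant $M$, the process has probability at least $\Pr(Z_1=0)^M$ of going extinct at the next step, so staying positive but bounded forever is a zero-probability event (the degenerate case $\Pr(Z_1=0)=0$ is trivial because then $Z_r$ is monotone non-decreasing and supercriticality forces $Z_r\to\infty$ a.s.).

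Given the dichotomy, for any fixed integer $K\geq 1$ the indicators $\mathbf{1}\{Z_r\geq K\}$ converge pointwise almost surely as $r\to\infty$ to $\mathbf{1}\{Z_r\to\infty\}$: on the extinction event they are eventually $0$, and on the explosion event they are eventually $1$. Bounded convergence then yields $\Pr(Z_r\geq K)\to 1-q$ for every fixed $K\geq 1$. Specializing to $K=1$ gives $\Pr(Z_r>0)\to 1-q$ as well.

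Combining the two limits,
\[
\Pr(Z_r\geq K\mid Z_r>0)=\frac{\Pr(Z_r\geq K)}{\Pr(Z_r>0)}\longrightarrow\frac{1-q}{1-q}=1,
\]
which is precisely the statement $\Pr(Z_r\geq K\mid Z_r>0)=1-o_r(1)$. There is no genuine obstacle in this argument; it is short and its only non-trivial ingredient is the classical extinction/explosion dichotomy, which is a textbook result (see, e.g., Athreya--Ney, \emph{Branching Processes}). The one subtlety worth double-checking is that the denominator $\Pr(Z_r>0)$ stays bounded away from $0$, which is immediate since $\Pr(Z_r>0)\geq \Pr(\text{survival})=1-q>0$ for every $r$.
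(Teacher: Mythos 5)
Your proof is correct, and it takes a genuinely different route from the paper's. The paper argues by contradiction entirely at the level of probabilities: assuming $\Pr(Z_r<K\mid Z_r>0)\geq\eps$ along the sequence $r$, it observes that each such configuration dies out in one step with probability at least $q_0^K$ (where $q_0=\Pr(Z_1=0)$), forcing the inequality $\Pr(\text{extinction})\geq \delta(1-\Pr(Z_r=0))+\Pr(Z_r=0)$ with $\delta=\eps q_0^K>0$; letting $r\to\infty$ gives $\phi\geq\delta(1-\phi)+\phi$, contradicting $\phi<1$. You instead import the classical extinction/explosion dichotomy as a black box, note that $\mathbf{1}\{Z_r\geq K\}\to\mathbf{1}\{\text{survival}\}$ almost surely for every fixed $K\geq 1$, and finish with bounded convergence applied to numerator and denominator separately. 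Both are valid. The paper's argument is shorter and self-contained (it secretly reproves the piece of the dichotomy it needs), at the cost of the implicit assumption $q_0>0$ — harmless for the $\Bin(d,p)$ offspring distributions that actually arise in Corollary~\ref{cor:prob-constant-no-survivors-in-BP}, but something you correctly handle explicitly in the degenerate case. Your argument is more modular and conceptually transparent, though your one-line justification of the dichotomy (``bounded positive forever has probability zero'') is only a sketch; to be fully rigorous one needs a Lévy $0$--$1$ law or ``returns to a fixed level infinitely often'' argument, which is why you reasonably cite it as a textbook fact rather than reproving it.
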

\begin{proof}
Let $\phi=\Pr(\mbox{extinction})$ and $q_0$ be the probability of no offspring in one step; since the process is supercritical we have $ \phi,q_0<1$. Suppose for the sake of contradiction that there is $\eps>0$ such that $\Pr(Z_r< K\mid Z_r>0)\geq \eps$ for all $r\geq 0$. Then, we have $\Pr(\mbox{extinction}\mid Z_r>0)\geq \delta$ where $\delta=\eps q_0^K>0$.
Hence, for every $r$ we have
\[\Pr(\mbox{extinction})\geq \delta(1-\Pr(Z_r=0))+\Pr(Z_r=0).\]
As $r$ grows large, we have that $\Pr(Z_r=0)\rightarrow \phi$, so by taking limits in the above inequality we obtain that $\phi\geq \delta(1-\phi)+\phi$, contradicting that $\phi<1$.  
\end{proof}

\begin{corollary}\label{cor:prob-constant-no-survivors-in-BP}
    Let \(0 < d_0 \leq d\) be integers, and \(p\in (\tfrac{1}{d-1})\) be a real. Then consider a Galton-Watson process where the root has offspring distribution \(\Bin(d_0, p)\), and all the following generations have offspring distribution \(\Bin(d, p)\). Let \(Z_r\) be the size of the surviving population at generation \(r\). Then, for any fixed integer \(K>0\), \(\Pr(Z_r \geq K\mid Z_r>0)=1-o_r(1)\).
\end{corollary}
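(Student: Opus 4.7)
\medskip
\noindent\textbf{Proof proposal.} The plan is to reduce to Lemma~\ref{lem:constant-number-at-rth-gen-unlikely} by conditioning on the first generation and treating each child of the root as the root of an independent standard Galton--Watson process with $\Bin(d,p)$ offspring. Since $p>\frac{1}{d-1}$, each such subprocess is supercritical, so Lemma~\ref{lem:constant-number-at-rth-gen-unlikely} applies to it directly.

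More concretely, I would first let $\phi_d$ denote the extinction probability of the standard $\Bin(d,p)$ GW process and observe that the extinction probability of the modified process is $(1-p+p\phi_d)^{d_0}<1$, so the survival probability $s:=1-(1-p+p\phi_d)^{d_0}$ is a strictly positive constant independent of $r$; in particular $\Pr(Z_r>0)\geq s$ for all $r$. Next, let $X\sim\Bin(d_0,p)$ be the number of children of the root and, conditional on $X=k$, write $Z_r=\sum_{i=1}^{k}Z_{r-1}^{(i)}$, where $Z_{r-1}^{(i)}$ denotes the $(r-1)$-th generation of the $i$-th subtree rooted at the $i$-th child of the root.

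The key step is then to upper bound the ``small but nonzero'' event. Set $\delta_r:=\Pr(0<Z_{r-1}^{(1)}<K)$ for the standard $\Bin(d,p)$ process; by Lemma~\ref{lem:constant-number-at-rth-gen-unlikely} together with the fact that $\Pr(Z_{r-1}^{(1)}>0)$ converges to $1-\phi_d>0$, we get $\delta_r=o_r(1)$. If the total population at generation $r$ lies in $(0,K)$, then at least one of the surviving subtrees must have population in $(0,K)$, so by independence and a union bound,
\[
\Pr\!\bigl(0<Z_r<K\mid X=k\bigr)\;\leq\;k\,\delta_r.
\]
Averaging over $k=1,\dots,d_0$ gives $\Pr(0<Z_r<K)\leq d_0\,\delta_r=o_r(1)$, and dividing by $\Pr(Z_r>0)\geq s$ yields the desired bound $\Pr(Z_r<K\mid Z_r>0)=o_r(1)$, concluding the proof.

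I do not anticipate a real obstacle here: the only subtlety is making sure that the event $\{0<Z_r<K\}$ really is controlled by the event that some individual subtree is small and nonzero, but this follows from the trivial observation that a sum of non-negative integers which is nonzero and less than $K$ must have at least one positive summand less than $K$. The supercriticality of the subprocesses (ensured by $p>\frac{1}{d-1}$, which is the hypothesis of the corollary) is exactly what lets us apply Lemma~\ref{lem:constant-number-at-rth-gen-unlikely}; the modified root distribution $\Bin(d_0,p)$ only matters through $s>0$, which keeps the conditioning harmless.
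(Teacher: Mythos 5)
Your proof is correct and takes essentially the same approach as the paper: condition on the root's children, note that the subtrees are independent supercritical $\Bin(d,p)$ Galton--Watson processes, and invoke Lemma~\ref{lem:constant-number-at-rth-gen-unlikely} on them. The only cosmetic difference is that you apply a union bound over which subtree is ``small but surviving'', while the paper conditions on the set of surviving children and uses a product of conditional probabilities; both are trivially correct ways to finish once the reduction is set up.
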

\begin{proof}
    Note that once at second or later generation, the process is supercritical, so then we can apply  Lemma~\ref{lem:constant-number-at-rth-gen-unlikely}. Thus we need to consider the case of the root.

    Note that if \(Z_r > 0\), then there is a non-empty set \(\cC\) of children of the root whose branching process survive \(r-1\) generations. Since their individual branching processes -- denote them \((Z_t^i)_{i\in\cC,t\geq 0}\), are independent, \(\Pr(Z_r < K\mid Z_r > 0, \cS\text{ survive}) \leq \prod_{i\in\cC}\Pr(Z_{r-1}^i < K\mid Z_{r-1}^i > 0) = o_r(1)\) by the previous lemma.
Summing over the possible (non-empty) sets of surviving children, we obtain that the bound holds unconditionally.
\end{proof}

Having all necessary results, we can now prove Lemma~\ref{lem:giantperc}, restated here for convenience.

\begin{Lemgiantperc}
\statelemgiantperc
\end{Lemgiantperc}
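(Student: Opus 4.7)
My plan is to split the proof into the subcritical case~(1) and the supercritical case~(2), concentrating the technical novelty in the product tail bound on $\Pr[S'\cap\cC_1=\emptyset]$ in part~(2).

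For part~(1), I would adapt the conditioning argument of Lemma~\ref{lem:graph-with-almost-regular-deg-sequence-has-good-components}: conditional on the number $k=O(n^{1/6})$ of matched half-edges of~$S$, the graph $\tilde\G_{n,\vec d,m}\setminus S$ is distributed as the exact-edge model on $n-|S|$ perfectly $d$-regular vertices with $m-k=\tfrac{1}{2}dpn+o(n)$ edges, of density $p+o(1)<1/(d-1)$. Using the standard subcritical component-size bound for the random $d$-regular graph (via BFS dominated by a subcritical Galton--Watson process with offspring distribution $\mathrm{Bin}(d-1,p)$), all components of $\tilde\G_{n,\vec d,m}\setminus S$ have size $O(\log n)$ with probability $1-\emm^{-\Omega(n)}$. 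Re-attaching the $O(n^{1/6})$ vertices of~$S$, each with at most~$d$ incident edges, can merge at most $O(|S|)$ of these components, so the largest component of $\tilde\G_{n,\vec d,m}$ has size $O(|S|\log n)=o(n^{1/2})$, as required.

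For part~(2), the first two bounds ($|\cC_1|\geq (\chi-\delta)n$ and $|\cC_2|\leq Mn^{2/3}$) follow directly from Lemma~\ref{lem:graph-with-almost-regular-deg-sequence-has-good-components}. For the tail bound on $\Pr[S'\cap\cC_1=\emptyset]$, I would perform a sequential BFS exploration of the vertices in~$S'$. For each $i\in S'$ in turn, BFS explores $i$'s component in the as-yet-unrevealed portion of $\tilde\G_{n,\vec d,m}$ to depth~$L$ or until a population threshold~$K$ is reached; here $L$ and~$K$ are chosen to grow slowly with~$n$. Since $|S|=O(n^{1/6})$ and each exploration reveals at most $O((d-1)^L)$ half-edges, the total number of half-edges revealed across the $|S'|$ explorations is $o(n^{1/2})$, so at every step the BFS transitions remain within $(1\pm o(1))$ of those of a Galton--Watson process with root offspring $\mathrm{Bin}(d_i,p)$ and subsequent offspring $\mathrm{Bin}(d-1,p)$. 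The extinction probability of this process is exactly $(p\varphi+1-p)^{d_i}$ by the definition of $\varphi$ in~\eqref{eq:phi}.

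To bound the per-step contribution I would combine (a) Corollary~\ref{cor:prob-constant-no-survivors-in-BP} (extended to slowly growing~$K$ via the supercritical Galton--Watson martingale) to argue that the probability the BFS either dies or stalls with population less than~$K$ at depth~$L$ is at most $(1+\eps/2)(p\varphi+1-p)^{d_i}$, and (b) a continuation/sprinkling argument to certify that a surviving BFS (population $\geq K$) places $i\in\cC_1$ except with probability $n^{-\Omega(1)}$; for (b) one continues the BFS further and uses that a supercritical exploration started from $K=\Theta(\log n)$ active half-edges extincts with probability at most $(p\varphi+1-p)^K=n^{-\Omega(1)}$, which combined with $|\cC_2|\leq Mn^{2/3}$ certifies $i\in\cC_1$ whenever the BFS reaches size $\omega(n^{2/3})$. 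Conditioning sequentially on past explorations then yields $\Pr[S'\cap\cC_1=\emptyset]\leq(1+\eps)^{|S'|}\prod_{i\in S'}(p\varphi+1-p)^{d_i}\leq(1+\eps)^{|S|}\prod_{i\in S'}(p\varphi+1-p)^{d_i}$. The main obstacle is precisely the certification step~(b): a direct appeal to $|\cC_2|=O(n^{2/3})$ yields only an additive error, which is insufficient when the per-vertex target $(p\varphi+1-p)^{d_i}$ is itself very small, so one must let $K$ grow with~$n$ beyond the constant-$K$ regime of Corollary~\ref{cor:prob-constant-no-survivors-in-BP} and carefully track the edge budget across all $|S'|$ sequential explorations in order to convert the additive error into the required multiplicative one.
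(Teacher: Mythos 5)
Your part~(1) is essentially the same conditioning argument as the paper (which simply cites \cite[Proposition~5.2]{coja2023}), though the sketch via ``all components have size $O(\log n)$ w.p.\ $1-\emm^{-\Omega(n)}$'' overstates what a crude BFS-domination gives; the $\emm^{-\Omega(n)}$ bound on $\{\cC_1\geq Mn^{1/2}\}$ is a sharper large-deviation estimate and should not be taken for granted.

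The interesting divergence is in part~(2). You correctly reduce the problem to a per-vertex bound on ``local BFS survives but misses the giant,'' and you correctly note that an additive error of $n^{-\Omega(1)}$ per vertex converts to the required multiplicative $(1+\eps)$ factor, because $(p\varphi+1-p)^{d_i}\geq(p\varphi+1-p)^d$ is a positive constant. But you then try to \emph{achieve} $n^{-\Omega(1)}$ by letting $K,L$ grow with~$n$ and running a sprinkling/continuation argument, which is where the difficulties you flag (extending Corollary~\ref{cor:prob-constant-no-survivors-in-BP} quantitatively beyond constant~$K$; controlling the conditional structure as explorations of different $i\in S'$ interleave and possibly merge with an already-revealed large component; tracking the edge budget through continuations of size $\omega(n^{2/3})$) all arise. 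These are surmountable but nontrivial, and your proposal leaves them open.

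The paper avoids all of this with a cleaner idea that you did not find and that is worth internalising: stop the BFS at a \emph{fixed} depth $R$, then ``regroup'' the surviving depth-$R$ leaves of each $v\in T$ into $\lfloor L_v(d-1)/d\rfloor$ new degree-$d$ vertices of an auxiliary graph~$\H$, which---conditional on the number of such vertices and on the residual half-edge count---is distributed exactly as a $d$-regular exact-edge percolation with $n-O(n^{1/6})$ vertices. By vertex-exchangeability of~$\H$, conditional on the size of $\cC_1(\H)$, the membership of $\cC_1(\H)$ is a uniformly random subset; so the probability that all $K(d-1)$ new vertices attached to~$v$ miss the giant is at most $(1-\chi/3)^{K(d-1)}$, which is below any target~$\epsilon_1$ for a large enough \emph{constant}~$K$. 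No sprinkling, no growing~$K$, no quantitative martingale estimate is needed, and the additive errors that remain ($n^{-C|S'|}$ for few nice neighbourhoods, $\emm^{-\Omega(n)}$ for bad component sizes) are small enough to be absorbed because $|S'|=O(n^{1/6})$. Your approach is a legitimate alternative route, but the hidden-regular-graph construction is the key step that makes the paper's proof short and clean, and it is precisely the device that sidesteps the ``main obstacle'' you identify.
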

\begin{proof}[Proof of Lemma~\ref{lem:giantperc}]
    For Item (1) (the subcritical case), the statement is proved in \cite[Proposition 5.2]{coja2023} for \(S = \emptyset\). Note that while the statement of the proposition is for \(d\)-regular degree sequences, as it is noted therein, it also applies for percolation on a graph  of maximum degree \(d\) (with any degree sequence).

    We thus focus on Item (2). The first part of the claim follows from Lemma~\ref{lem:graph-with-almost-regular-deg-sequence-has-good-components}. It only remains to bound the probability that none of the vertices in \(S'\) belong to the largest component. Let \(R, K > 0\) be sufficiently large constants and \(\epsilon_0, \epsilon_1 > 0\) be sufficiently small, to be specified later.

    First, consider revealing the depth-\(R\) neighbourhoods around vertices of \(S'\) in the underlying configuration model, and then the outcome of the percolation on these edges. We say that the revealed \(R\)-neighbourhood of \(v\in S'\) is ``nice'', if it is acyclic, disjoint from all the other revealed \(R\)-neighbourhoods, and all vertices in it other than \(v\) have degree \(d\).   The following claims says that  there are at least \((1-\epsilon_0)|S'|\) nice neighbourhoods.

    \noindent{\bf Claim~1: }{\sl Let \(\epsilon_0\in(0,1)\) be a constant. Then there exists constant \(C > 0\) (not depending on \(n\)) such that with probability \(n^{-C|S'|}\), there are at least \((1-\epsilon_0)|S'|\) vertices \(v\in S'\) with nice neighbourhoods.}
    
    The proof of the claim is standard and similar to the proof of \cite[Lemma 15]{Blanca2} so we sketch the argument briefly. Consider a BFS revealing process from all the vertices in \(S'\) up to depth \(R\). Say that a ``bad event'' is revealing a match going to a vertex that has been either already seen or belongs to \(S\) (only vertices in \(S\) can have degree \(\neq d\)). Note that the size of each neighbourhood is at most \(|S'| d^R\) (at least for \(R\) large enough), and thus in each revealing step, regardless of previous outcomes, the probability of a bad event occurring is bounded by \(\frac{|S'|d^R + |S|}{n - |S'|d^R - |S|}\leq \frac{2|S|d^R}{n}\) (since \(S = O(n^{1/6})\), for large enough \(n\) the denominator is at least \(n/2\)). Hence we can upper bound the number of bad events by a sum of \(|S'| d^R\) i.i.d. Bernoulli random variables with mean \(\frac{2|S|d^R}{n}\). Then by Chernoff bounds (e.g. \cite[Lemma 16]{Blanca2}), \(\Pr(\#\text{bad events} \geq \epsilon_0|S'|/2) \leq \exp\big\{-\frac{\epsilon_0 |S'|}{2} \log\big(\frac{n}{2e|S|d^R}\big)\big\}\). Since \(2e|S|d^R = O(n^{1/6})\), the probability is at most \((C'n^{-5\epsilon_0/12})^{|S'|}\) for some constant \(C' > 0\) (independent of \(n\)). Note that each bad event makes at most two neighbourhoods not nice, and that this probability is at most \(n^{-C'|S'|}\) for some constant \(C'(C,\epsilon_0) > 0\). Thus with probability at least \(1-n^{-C|S'|}\) there are at least \((1-\epsilon_0)|S'|\) nice neighbourhoods, which concludes the claim.

    Having proved the claim, let \(T\) be the subset of \(S'\) with good neighbourhoods. Further on, we will consider the probability of \(S'\cap \cC_1(\tilde\G_{n,\vec d,m})=\emptyset\) conditional on \(|T| \geq (1-\epsilon_0)|S'|\).

    As a next step in the analysis, we find a ``hidden'' \(d\)-regular percolated graph \(\H\) in the remaining unrevealed graph, with a property that whenever a vertex of \(\H\) \textit{corresponding to a vertex of \(S'\)} belongs to \(\cC_1(\H)\), then also \(S'\cap \cC_1(\tilde\G_{n,\vec d,m})\neq\emptyset\). Thus we can upper bound the probability of \(S'\cap \cC_1(\tilde\G_{n,\vec d,m})=\emptyset\), by the probability that neither of vertices corresponding to \(S'\) in \(\H\) belong to \(\cC_1(\H)\).
    
    For each \(v\in T\), consider the set of corresponding \emph{surviving leaves}: that is, the set of vertices distance-\(R\) from \(v\) (in the underlying percolation model), that are connected to \(v\) after percolation. Let \(L_v\) denote the number of surviving leaves corresponding to \(v\).

    Each of surviving leaves has \(d-1\) endpoints with an unrevealed match. Notice that all of these \(L_v\) leaves are connected in \(\tilde\G_{n,\vec d,m}\), and thus, crucially, all of their endpoints can be seen as connected, or belonging to the same vertex. We can hence ``regroup'' the endpoints for the surviving leaves corresponding to each \(v\in T\) into groups of \(d\) endpoints, thus obtaining \(\lfloor\frac{L_v(d-1)}{d}\rfloor\) new vertices of degree \(d\) (we can ``discard'' the excess endpoints). These new vertices will be the vertices in \(\H\) corresponding to \(v\).

    Thus we have three groups of endpoints / vertices of \(\tilde\G_{n,\vec d,m}\):
    \begin{itemize}
        \item Vertices (and their corresponding endpoints) in \(V\setminus S\) that were not revealed by the \(R\)-neighbourhood exploration process. Call these \(V_0\).

        \item The newly formed vertices corresponding to the ``regrouped'' endpoints: for each \(v\in T\), divide the endpoints corresponding to its surviving leaves into \(\lfloor\frac{L_v(d-1)}{d}\rfloor\) new vertices -- denote these as \(V_v\). 
        
        \item All the remaining endpoints and vertices. In particular, this third group consists of unrevealed endpoints of \(S\) and endpoints of vertices in the \(R\)-neighbourhoods that were not yet revealed, which do not fall into the second category of regrouped vertices.
    \end{itemize}
    The vertices of \(\H\) will be \(V_0\cup\bigcup_{v\in T}V_v\), and all of them have degree \(d\). The edges of \(\H\) are all percolated edges of \(\tilde\G_{n',d,m}\) that are between two endpoints that also belong to a vertex of \(\H\).

    Next we claim, that conditional on the number of vertices in \(\bigcup_{v\in T} V_v\) and the number edges in \(\tilde\G_{n',d,m}\) containing an endpoint in the third category, \(\H\) is distributed as a $d$-regular exact edge model with $n' = n - O(n^{1/6})$ vertices and \(m' = \tfrac 12 dpn + o(n)\) edges. 

    For the number of vertices, it follows from \(|V_0| \leq n' = |V_0| + |\bigcup_{v\in T}V_v|\), and then from \(|V_0| \geq n - |S| - d^R|S'| = n - O(n^{1/6})\), and \(|\bigcup_{v\in T}V_v| = d^R |S'| = O(n^{1/6})\).

    For the number of edges, note that the number of endpoints in the third category is at most \(|S| + |S'| d^{R+1} = O(n^{1/6}) = o(n)\) and the number of already revealed edges is also bound by \(O(n^{1/6})\). Hence we get \(m' = \tfrac 12 dpn + o(n)\).

    For the distribution, we note that the matches of unrevealed endpoints of the third category are uniformly at random, thus by symmetry, all sets of \(m'\) edges of \(\H\) are equally likely, thus it is distributed by exact edge model.

    The following claim implies that we can upper bound the probability of \(S'\cap \cC_1(\tilde\G_{n,\vec d,m}) =\emptyset\) by the probability of \((\bigcup_{v\in T} V_v)\cap \cC_1(\H)\), at least as long as the components of \(\H, \tilde\G_{n,\vec d,m}\) are sufficiently well-behaved (as in the statement of this Lemma). The latter probability is easier to bound given the symmetric nature of \(\H\).

    \noindent{\bf Claim~2: }{\sl  
        Let \(v\in T\), and \(w\in V_v\) be such that \(w\) is in a component of size \(s\) in \(\H\). Then \(v\) is in a component of size \(\geq s - |S'|d^{R}\) in \(\tilde\G_{n,\vec d,m}\).}

    This claim follows from the facts that the endpoints in \(\H\) corresponding to \(w\) correspond, in \(\tilde\G_{n,\vec d,m}\), to vertices that are connected in \(\tilde\G_{n,\vec d,m}\) and in particular are also connected to \(v\), thus any edge of the component of \(w\) is also in the component of \(v\). Then all vertices in \(V_0\) belonging to the component of \(w\) in \(\H\) are also in the component of \(v\) in \(\tilde\G_{n,\vec d,m}\), and \((V(\H)\setminus V_0)\leq |S'|d^R\).

    Let \(\cC_{good}\) be the event that \(\H\) and \(\tilde\G_{n,\vec d,m}\) have components satisfying \(\cC_1(\H)\geq \chi n'/2\), \(\cC_1(\tilde\G_{n,\vec d,m})\geq \chi n / 3\), and \(\cC_2(\tilde\G_{n,\vec d,m})\leq Mn^{2/3}\), where \(M\) is a constant from Lemma~\ref{lem:graph-with-almost-regular-deg-sequence-has-good-components} (taking \(\delta = 2\chi/3\)). We can assume \(n\) is large enough so \(M n^{2/3} < \chi n'/2 - d^R|S'| \geq \chi n / 3\). Note this holds for all sufficiently large \(n\) as \(n' = n - O(n^{1/6})\). By the Lemma~\ref{lem:graph-with-almost-regular-deg-sequence-has-good-components}, \(\Pr(\cC_{good}) \geq 1 - \emm^{-\Omega(n)}\).

    Conditional on \(\cC_{good}\), the above claim implies that whenever \(w\in V_v\) belongs to \(\cC_1(\H)\), then \(v\in \cC_1(\tilde\G_{n,\vec d,m})\). Next we show that for any \(\epsilon_1 > 0\), conditional on \(T\), the following inequality holds:\begin{equation}\label{eq:prob-none-in-giant-of-H}
        \Pr(\forall v\in T. V_v\cap \cC_1(\H) = \emptyset \mid T, \cC_{good}) \leq \prod_{i\in T} [(\varphi p + 1 - p)^{d_i} + \epsilon_1]
    \end{equation}
    In order to prove \eqref{eq:prob-none-in-giant-of-H}, first note that since that \(\H\) is obtained by percolation of u.a.r. \(d\)-regular graph, conditional on size of the largest component, the vertices belonging to it are a u.a.r. subset of that size.

    Write \(T = \{i_1,\dots,i_k\}\), and let \(1\leq j\leq k\). Suppose we conditioned on the ``outcome'' for \(R\)-neighbourhoods of \(i_1,\dots,i_{j-1}\): that is, the number of surviving leaves, and which of the vertices in \(\bigcup_{\ell<j} V_{i_\ell}\) belong to the largest component. Regardless of the previous neighbourhoods, there are at least \(\chi n' / 2 - d^R |S'| \geq \chi n / 3\) undetermined vertices in the largest component, and these are chosen, by symmetry, uniformly at random from the set of at most \(n\) not yet determined vertices.

    Now consider the number of surviving leaves, \(L_{i_j}\), in the \(R\)-neighbourhood of \(i_j\). By Corollary~\ref{cor:prob-constant-no-survivors-in-BP}, the probability that \(L_{i_j} < Kd\) is \((\varphi p + 1 - p)^{d_{i_j}} + o_R(1)\) (as \((\varphi p + 1 - p)^{d_{i_j}}\) is the survival probability for the corresponding branching process). Taking \(R\) large enough, we can get \(o_R(1) \leq \epsilon_1/2\). Then, conditional on \(L_{i_j} \geq Kd\), \(|V_{i_j}|\geq K(d-1)\), and the probability that neither of these \(K(d-1)\) belong to a u.a.r. subset of \(<n\) vertices of size \(\geq \chi n / 3\) is at most \((1-\chi/3)^{K(d-1)}\rightarrow 0\) as we increase \(K\). Thus for \(K\) large enough, this probability is at most \(\epsilon_1/2\).

    Putting it all together, the probability that \(V_{i_j}\cap \cC_1(\H) = \emptyset\) is at most \((\varphi p + 1 - p)^{d_{i_j}} + \epsilon_1\). Since this is regardless of outcomes of the previous neighbourhood, we can take a product, and thus obtain \eqref{eq:prob-none-in-giant-of-H}.
    
    To obtain the final probability, we unravel the conditioning:
    \begin{align*}
    \Pr(S'\cap \cC_1(\tilde\G_{n,\vec d,m}) = \emptyset) &\leq \Pr(S'\cap \cC_1(\tilde\G_{n,\vec d,m}) = \emptyset\mid |T|\geq (1-\epsilon_0)|S'|) + \Pr(|T| < (1-\epsilon_0)|S'|) \\ & \leq \Pr(S'\cap \cC_1(\tilde\G_{n,\vec d,m}) = \emptyset\mid |T|\geq (1-\epsilon_0)|S'|,\cC_{good}) \\ &+ \Pr(\overline{\cC_{good}} \mid |T|\geq (1-\epsilon_0)|S'|) + \Pr(|T| < (1-\epsilon_0)|S'|)
    \end{align*}
    We showed \(\Pr(|T| < (1-\epsilon_0)|S'|) \leq n^{-C|S'|}\), and \(\Pr(\cC_{good}\mid |T|\geq (1-\epsilon_0)|S'|) \leq \emm^{-\Omega(n)}\). \eqref{eq:prob-none-in-giant-of-H} gives \(\Pr(S'\cap \cC_1(\tilde\G_{n,\vec d,m}) = \emptyset\mid |T|\geq (1-\epsilon_0)|S'|, T) \leq \prod_{i\in T} [(\varphi p + 1 - p)^{d_i} + \epsilon_1]\). 
    
    It remains to show that, for \(\epsilon_0,\epsilon_1\) sufficiently small and $|T|\geq (1-\epsilon_0)|S'|$, we have that
    \begin{equation}\label{eq:T-bound-bounded-by-S'-bound}
        \prod_{i\in T} [(\varphi p + 1 - p)^{d_i} + \epsilon_1] \leq (1+\epsilon/2)^{|S'|} \prod_{i\in S'} (\varphi p + 1 - p)^{d_i}
    \end{equation}
    As a first step towards \eqref{eq:T-bound-bounded-by-S'-bound}, note that for each \(i\in S'\), \(d_i\leq d\), and since also \((\varphi p + 1 - p) < 1\), it follows that \((\varphi p + 1 - p)^{d_i} + \epsilon_1\leq (\varphi p + 1 - p)^{d_i}(1 + \epsilon_1 / (\varphi p + 1 - p)^d)\).  Also, we can bound, for all sufficiently large \(T\),
    \[
    \frac{\prod_{i\in T} (\varphi p + 1 - p)^{d_i}}{\prod_{i\in S'} (\varphi p + 1 - p)^{d_i}} = \prod_{i\in S'\setminus T} (\varphi p + 1 - p)^{-d_i} \leq (\varphi p + 1 - p)^{-d|S'\setminus T|}\leq (\varphi p + 1 - p)^{d\epsilon_0|S'|}
    \]
    Combining, we therefore get that
    \[
        \prod_{i\in T} [(\varphi p + 1 - p)^{d_i} + \epsilon_1] \leq [(1 + \tfrac{\epsilon_1}{ (\varphi p + 1 - p)^d})(\varphi p + 1- p)^{d\epsilon_0}]^{|S'|} \prod_{i\in S'} (\varphi p + 1 - p)^{d_i}
    \]
    Note that by taking \(\epsilon_0, \epsilon_1 \rightarrow 0\), we get \((1+\frac{\epsilon_1}{(\varphi p + 1 - p)^d})(\varphi p + 1 - p)^{d\epsilon_0}\rightarrow 1\), so in particular we can have \(\epsilon_0,\epsilon_1 > 0\) small enough so that expression becomes at most \(1 + \epsilon/2\), yielding~\eqref{eq:T-bound-bounded-by-S'-bound}.
    
    Finally, note that \(\prod_{i\in S'} (\varphi p + 1 - p)^{d_i}\leq (\varphi p + 1 - p)^{d|S'|}\), and for all sufficiently large \(n\), \(n^{-C|S'|} + \emm^{-\Omega(n)}\leq [\epsilon(\varphi p + 1 - p)^{d}/2]^{|S'|}\) (as \(|S'| = O(n^{1/6})\), \(n^{-C|S'|} + \emm^{-\Omega(n)} = o_n(1)^{|S'|}\)). Therefore we get that $\Pr(S'\cap \cC_1(\tilde\G_{n,\vec d,m}) = \emptyset)$ is at most
    \[
    [\epsilon(\varphi p + 1 - p)^{d}/2]^{|S'|} + (1+\epsilon/2)^{|S'|} \prod_{i\in S'}(\varphi p + 1 - p)^{d_i} \leq (1 + \epsilon)^{|S'|} \prod_{i\in S'}(\varphi p + 1 - p)^{d_i},
    \]
    which completes the proof.
\end{proof}

\section{Remaining Proofs for Section~\ref{sec:orderedproof}}

\subsection{Proof of Lemma~\ref{lem:ordpath}}\label{sec:E1E1}

In this section, we give the proofs of Equations~\eqref{eq:Ptauxi} and~\eqref{eq:step13223}.

Recall that \(\nu_\ord\) and \(\rho_\ord\) are respectively the vertex colour and the half-edge colour statistic for the ordered phase. Then let \(P\) is an arbitrary sequence of vertices \((v_0,v_1,\dots,v_\ell)\) with \(q\)-colouring \(\tau = (\tau_0,\dots,\tau_\ell\) and \(\xi\) is a set of paired half-edges that turn \(v_0,\dots,v_\ell\) into a path in that order. Then \(\cE_{P,\tau,\epsilon}\) is the event that \(v_0,v_1,\dots,v_\ell\) form a path in \(\hat\G\) using the half-edges prescribed by \(\epsilon\) and that their colours are as prescribed by \(\tau\). We claim the following asymptotic bound on the probability of \(\cE_{P,\tau,\xi}\):
\begin{equation*}
 \Pr(\cE_{P,\tau,\xi})\sim (dn)^{-\ell}\nu_{\ord,\tau_0}\prod^{\ell-1}_{k=0}\frac{\rho_{\ord, \tau_k\tau_{k+1}}}{\nu_{\ord,\tau_k}}.\tag{\ref{eq:Ptauxi}}
\end{equation*}

\begin{proof}[Proof of Equation~\eqref{eq:Ptauxi} ]
    Recall that the graph $\hat \G$ has $ n_i=n\nu_i$ vertices of colour $i\in [q]$ and the number of edges between colour classes $i$ and $j$ with $i\neq j$ is given by $2m_{ij}:=d n \rho_{ij}$ if $i\neq j$, and $m_{ii}:=\frac{d n}{2} \rho_{ii}$ if $i=j$, cf.  \eqref{eq:edgestat}; we further have that $\nu$ and $\rho$ satisfy $\|\nu-\nuord\|_1+\|\rho-\rhoord\|_1=O(n^{-1/3})$. The graph $\hat \G$ is uniformly distributed conditioned on these statistics. In particular, the number of ways to match the half-edges consistent with the vertex statistics $\{n_i\}$ and edge-statistics $\{m_{ij}\}$ is given by 
\[
\mathcal{N}:=\binom{n}{n_1,\dots,n_q}\prod_{i\in[q]}\binom{d n_i}{2m_{i1},\dots,2m_{iq}} \prod_{i\in [q]} \frac{(2m_{ii})!}{ 2^{m_{ii}} m_{ii}!} \times \prod_{i<j} (2m_{ij})!.
\]
 There are \(\binom{n}{n_1,\dots,n_q}\) ways to assign colours to get the right colour statistics. Then, for a colour $i\in [q]$, there are $d n_i$ half-edges which can be split among the $q$-colour classes in 
 $\binom{d n_i}{2m_{i1},\dots,2m_{iq}}$  ways so that the $j$-th class gets $2m_{ij}$ half-edges ($j\in [q]$).  For colours $i \neq j$,  the number of ways to match the selected half-edges is $(2m_{ij})!$, whereas for $i=j$ it is $(2m_{ii})!!=\frac{(2m_{ii})!}{ 2^{m_{ii}} m_{ii}!}$.

For the $\tau$-coloured path $P$, for colours $i,j\in [q]$ with $i\neq j$, let $\ell_i=\ell_i(\tau)$ be the number of vertices of colour $i$, $e_{ii}=e_{ii}(\tau)$ be the number of edges whose endpoints have both colour $i$, and   $2e_{ij}=2e_{ij}(\tau)$  be the number of edges with one endpoint of colour $i$ and the other colour $j$. Recall also that \(\xi\) is a subset of paired half-edges of vertices of \(P\) that turn $P$ into a path. Note that for $i\in [q]$, every vertex $v$ in the path of colour $i$ uses two half-edges from the colour class $i$ (unless $v$ is one of the endpoints $v_0$ and $v_\ell$ in which cases it uses one), so the remaining number of half-edges incident to colour class $i$ is   $M_i=d n_i-2\ell_i+\mathbf{1}\{\tau_0=i\}+\mathbf{1}\{\tau_\ell=i\}$. Let $m'_{ij}=m_{ij}-e_{ij}$.

Akin to the previous counting, we  get that the number of ways to colour remaining vertices and match the remaining edges so that $\cE_{P,\tau,\xi}$ occurs is $\mathcal{N}'$ where
\[
\mathcal{N}':=\binom{n-(\ell+1)}{n_1-\ell_1,\dots,n_q-\ell_q} \prod_{i\in[q]}\binom{M_i}{2m_{i1}',\dots,2m_{iq}'} \times \prod_{i\in [q]} \frac{(2m_{ii}')!}{ 2^{m_{ii}' (m_{ii}')!}}\times \prod_{i<j} (2m_{ij}').
\]
We have
\begin{align}\label{eq:ordered-bound-term-1}
\frac{\mathcal{N}'}{\mathcal{N}}= \frac{(n-(\ell+1))!}{n!}\prod_{i\in[q]}\frac{n_i!}{(n_i-\ell_i)!}\,\prod_{i\in [q]}\frac{(M_i)!}{(dn_i)!}\,\prod_{i\in [q]} \frac{2^{m_{ii}}(m_{ii})!}{2^{m_{ii}'}(m_{ii}')!}\,\prod_{i< j}\frac{(2m_{ij})!}{(2m_{ij}')!}.
\end{align}
We have that $\frac{(n-(\ell+1))!}{n!}\sim n^{-(\ell+1)}$. Using that $n_i=n \nu_{\ord,i}+O(n^{2/3})$ for $i\in [q]$ and $\sum_{i\in [q]} \ell_i=\ell+1$, we also have $\prod_{i\in[q]}\frac{n_i!}{(n_i-\ell_i)!}\sim n^{\ell+1}\prod_{i\in [q]} (\nu_{\ord,i})^{\ell_i} $ 
and $\prod_{i\in [q]}\frac{(M_i)!}{(dn_i)!}\sim \frac{(dn)^2\nu_{\ord,\tau_0}\nu_{\ord,\tau_\ell}}{\prod_{i\in [q]} (dn\,\nu_{\ord,i})^{2 \ell_i}}$; note that the factor $(dn)^2\nu_{\ord,\tau_0}\nu_{\ord,\tau_\ell}$ accounts for the presence of the term $\mathbf{1}\{\tau_0=i\}+\mathbf{1}\{\tau_\ell=i\}$ in the expression for $M_i$.  Using in addition that $2m_{ij}=dn \rho_{\ord,ij}+O(n^{2/3})$, we also have that $\prod_{i\in [q]} \frac{2^{m_{ii}}(m_{ii})!}{2^{m_{ii}'}(m_{ii}')!}\prod_{i< j}\frac{(2m_{ij})!}{(2m_{ij}')!}\sim \prod_{i,j\in [q]} (dn\rho_{\ord,ij})^{e_{ij}}$. Combining these and using that\footnote{To see this, note that every internal vertex of $P$ has degree 2, by considering the edges incident to colour $i$ (and paying attention to endpoints) we have that $2 \ell_i-\mathbf{1}\{\tau_0=i\}-\mathbf{1}\{\tau_\ell=i\}=\sum_{j\in [q]} 2e_{ij}$. Summing over $i\in [q]$ and using that $\sum_i \ell_i=2(\ell+1)$, yields the stated equality.} $\sum_{i,j}e_{ij}=\ell$. 
we obtain that
\[
\Pr(\cE_{P,\tau,\xi})=\frac{\mathcal{N}'}{\mathcal{N}}\sim  (dn)^{-\ell}\nu_{\ord,\tau_0}\nu_{\ord,\tau_\ell} \prod_{i,j\in[q]} (\rho_{\ord,ij})^{e_{ij}} \prod_{i\in[q]} (\nu_{\ord,i})^{-\ell_i}.
\]
It is a matter of a few manipulations now to massage the last expression into \eqref{eq:Ptauxi}.
\end{proof}

We next prove Equation~\eqref{eq:step13223}, i.e. 
\begin{equation*}
\Pr\Big[\begin{array}{c}\mbox{the largest component of $\hat{\G}_v(\F)$}\\\mbox{avoids the $\tau$-coloured path $P$}\end{array}\,\Big|\, \cE_{P,\tau,\xi}\Big]\leq (1+\tfrac{\eps}{3})^{\ell} \hat\varphi^{\ell_1(\tau)}_1,\tag{\ref{eq:step13223}}
\end{equation*}
where $\ell_1(\tau)$ is the number of vertices with colour 1 under $\tau$ and $\hat \varphi_1$ is the extinction probability of the modified Galton-Waston process (where the root has degree \(d-2\), non-root vertices have degree \(d\)), as defined in \eqref{eq:phi}.

\begin{proof}[Proof of Equation~\eqref{eq:step13223}]
    Condition on~$\cE_{P,\tau,\xi}$ and on the set of  edges $E_v$ in~$\hat \G$ incident to $v$. Let  $\mathbf{d}$ be the degree sequence of the graph $\hat\G\backslash v=(V\backslash v, E\backslash (E_P\cup E_v))$. For a colour $i\in[q]$, let $\V_{\nid i}$ denote the $i$-th colour class of $\hat\G\backslash v$ under $\hat\SIGMA_\ord$, $\mathbf{d}_i$ be the subsequence of $\mathbf{d}$ corresponding to vertices in $\V_{\nid i}$, $\E_i$ be the set of monochromatic edges within $\V_{\nid i}$, and $\F_i=\F\cap \E_i$.   Conditioned on $|\F_i|=f_i$, the graph  $\G_i=\big(\V_{\nid i}, \F_i\backslash (E_P\cup E_v)\big)$ has the same distribution as the exact-edge model $\tilde\G_{ n_i,\mathbf{d}_i, f_i}$ where $ n_i=|\V_{\nid i}|=n\nu_{\ord,i}+o(n)$. Note that for all $i\in[q]$, there are at most $\ell+1+d = O(n^{1/6})$ vertices in $\vec d_i$ with degree $\leq d$ (all other vertices have degree $d$).

Since $|\E_i|=\frac{dn}{2}\rho_{\ord,ii}+o(n)$, by a Chernoff bound we have that $|\F_i|=p\frac{dn}{2}\rho_{\ord,ii}+o(n)$ with probability 
%$1-\emm^{-\Omega(n^{1/2})}$. 
$1-\emm^{-\Omega(n)}$. 
It follows from Lemma~\ref{lem:percord}, applied with densities $p_i=p \frac{\rho_{\ord,ii}}{\nu_{\ord,i}}$ for $i\in [q]$, that  ${\G}_1$ is in the supercritical regime while ${\G}_i$ for $i\in \{2,\hdots,q\}$ is in the subcritical regime. 
Hence, it follows from Lemma~\ref{lem:giantperc} that  w.p. $1-\emm^{-\Omega(n)}$, the largest component $\cC$ in $ \G_1$ has size $\Omega(n)$ and the remaining components have all size $O(n^{2/3})$. Moreover,  from Lemma~\ref{lem:giantperc} part (2) we obtain that\footnote{Note that, because the edges in $E_v$ are excluded from $\hat\G\setminus v$,   all but at most $d+2$ vertices of the path $P$  have degree $d-2$ in $\hat\G\setminus v$; the contribution of vertices with degree $<d-2$ is at most a constant which is absorbed into the $(1+\frac{\eps}{3})^\ell$ factor in \eqref{eq:giant1}.}
\begin{equation}\label{eq:giant1}
\Pr[\,\cC\mbox{ avoids the path } P\mid \cE_{P,\tau,\xi}, E_v]\leq (1+\tfrac{\eps}{3})^{\ell}(p\varphi+1-\varphi)^{(d-2)\ell_1}\leq (1+\tfrac\eps 3)^{\ell+1}\hat\varphi_1^{\ell_1}.
\end{equation}
By contrast, for $i\in \{2,\hdots,q-1\}$, w.p. $1-\emm^{-\Omega(n)}$, the largest component in $ \G_i$ has size $O( n^{1/2})$. Since the union of the $ \G_i$'s over $i\in [q]$ is the graph $\widetilde \G_{P,v}=\big(\V\backslash \{v\}, \F\backslash (E_P\cup  E_v)\big)$, by a union bound it follows that, w.p. 
%$1-\emm^{-\Omega(n^{1/2})}$,  
$1-\emm^{-\Omega(n)}$,  
$\cC$ is the largest component in $\widetilde \G_{P,v}$ with $\Omega(n)$ size, while all the others have size $O(n^{2/3})$.
Now add the monochromatic edges of the path $P$, each percolated with probability $p$, to obtain the graph $\hat{\G}_v(\F)=(\V\backslash \{v\},\F\backslash \F_v)$. Let $\cC'$ be the component of $\hat{\G}_v(\F)$ with $\cC\subseteq \cC'$. Since we added only edges on the path $P$, we have
\begin{equation}\label{eq:giant2}\Pr[\,\cC'\mbox{ avoids the path } P\mid \cE_{P,\tau,\xi}]=\Pr[\,\cC\mbox{ avoids the path } P\mid \cE_{P,\tau,\xi}].
\end{equation}
Moreover, the length of  the path $P$ is $\leq n^{1/6}$, so $|\cC'|-|\cC|=O(n^{5/6})$. Similarly, any component in $\hat{\G}_v(\F)$ other than $\cC'$ has size $O(n^{2/3})$, and in particular $\cC'$ is the largest component in $\hat{\G}_v(\F)$.  Combining this with \eqref{eq:giant1} and \eqref{eq:giant2}, we obtain \eqref{eq:step13223}.
\end{proof}

\subsection{Remaining proofs for Lemma~\ref{lem:ordered-less-than-(d-1)^-3}}\label{sec:proof-lemma-18}

Next we prove Lemma~\ref{lem:ordered-less-than-(d-1)^-3} which we restate for convenience.

\begin{LemmaOrderedUpperBound}
\stateOrderedUpperBound
\end{LemmaOrderedUpperBound}
\begin{proof}
First, we observe that for $\beta=\beta_c$, it holds that $\emm^{\beta}=\frac{q-2}{(q-1)^{1-2/d}-1}$ and hence the value of $t$ in \eqref{eq:nuord} is $t_c=(q-1)^{2/d}$, which in turn gives $\nu_{\ord,1}=\frac{q-1}{q}$ and $\nu_{\ord,i}=\frac{1}{q(q-1)}$ for $i\in \{2,\hdots,q\}$. Consider henceforth arbitrary $\beta\geq \beta_c$; then the value of $t$ in \eqref{eq:nuord} satisfies $t\geq t_c$.

To show the desired inequality, we use the expressions for $\rho_{\ord,ij}$ given in \eqref{eq:rhostatistics}. We have that $\frac{\rho_{\ord,i1}}{\nu_{\ord,i}}$ is $\emm^{\beta}\frac{(\nu_{\ord,1})^{(d-1)/d}}{(\nu_{\ord,1})^{1/d}}/W$ if $i=1$ and $\frac{(\nu_{\ord,1})^{(d-1)/d}}{(\nu_{\ord,i})^{1/d}}/W$ for $i\neq 1$, where $W=\sum_{i,j\in [q]}\emm^{\beta\mathbf{1}\{i=j\}}(\nu_{\ord,i}\nu_{\ord,j})^{(d-1)/d}$. From \eqref{eq:nuord}, we have that   $\emm^\beta \big(\frac{\nu_{\ord,i}}{\nu_{\ord,1}}\big)^{1/d}=\emm^\beta/t\geq 1$, so $\frac{\rho_{\ord,11}}{\nu_{\ord,1}}\geq \frac{\rho_{\ord,i1}}{\nu_{\ord,i}}$ for all $i\in\{2,\hdots,q\}$. It suffices therefore to show the bound for $i \in \{2,\hdots,q\}$; fix an arbitrary such $i$.

From \eqref{eq:nuord}, we have that $\nu_{\ord,1}=\frac{t^d}{t^d+q-1}$, $\nu_{\ord,i}=\frac{1}{t^d+q-1}$ 
and hence the expression for $\rho_{\ord,i1}$ given in \eqref{eq:rhostatistics} 
yields $\rho_{\ord,{i1}}=\frac{t^{d-1}}{(t^{d-1}+q-1)^2+(\emm^{\beta}-1)(t^{2d-2}+(q-1))}$. Using the expression for $\emm^{\beta}-1$ from \eqref{eq:nuord}, it follows after some algebra that
\begin{equation}\label{eq:Rexprs12}\frac{\rho_{\ord,{i1}}}{\nu_{\ord,i}}=1-R \mbox{ where } R:=\tfrac{   t^{d+1}+(q-2)t^d-(q-1)t}{(t^d - t) (t^{d-1}+q-1)}
\end{equation}
So, to show the inequality $(1-\sqrt{\hat\varphi_1})\frac{\rho_{\ord,i1}}{\nu_{\ord,i}}>1-\frac{1}{(d-1)^5}$ it suffices to show that $\frac{1}{(d-1)^5}>\sqrt{\hat\varphi_1}+R$.

To upper bound $\hat\varphi_1$, we need first to find the solution of $\varphi_1=(p_1 \varphi_1+1-p_1)^{d-1}$ for $p_1=p\frac{\rho_{\ord,11}}{\nu_{\ord,1}}$. Similarly to above, we have $\rho_{\ord,11}=\frac{\emm^\beta t^{2(d-1)}}{(t^{d-1}+q-1)^2+(\emm^{\beta}-1)(t^{2d-2}+(q-1))}$, so using the expression for $\emm^{\beta}-1$ from \eqref{eq:nuord} we obtain after some algebra that $p_1=\frac{t^{d-1}(t-1)}{t^{d}-t}$ and $\varphi_1=1/t^{d-1}$. Since  $t\geq t_c$ for $\beta\geq \beta_c$, we obtain that $\varphi_1\leq \frac{1}{(q-1)^{2(d-1)/d}}$. It follows that $\hat\varphi_1=(p_1 \varphi_1+1-p_1)^{d-2}=\varphi_1^{(d-2)/(d-1)}\leq \frac{1}{(q-1)^{2(d-2)/d}}$, so $\sqrt{\hat\varphi_1}\leq \frac{1}{(q-1)^{(d-2)/d}}$. For $q\geq (5d)^5$, numerical calculations give that this is less than $\frac{1}{2(d-1)^5}$ for all $d\geq 3$ (the inequality is tighter for small values of $d$). A similar argument shows that $R<\frac{1}{2(d-1)^5}$ for all $d\geq 3$ when $q\geq (5d)^5$;  to see this, we have from the expression in \eqref{eq:Rexprs12} that $R\leq R'$ where $R':=\tfrac{t+q-1}{t^{d-1}+q-1}$. We have that $R'$ is decreasing in $t$, so using that $t\geq t_c$, we obtain that $R'\leq \tfrac{t_c+q-1}{t^{d-1}_c+q-1}=\frac{1}{(q-1)^{(d-2)/d}}$; from the same numerical inequality as above, we therefore conclude that $R$ is less than $\frac{1}{2(d-1)^5}$ for all $d\geq 3$ when $q\geq (5d)^5$.   Hence, $\frac{1}{(d-1)^5}>\sqrt{\hat\varphi_1}+R$, as needed.
\end{proof}

\subsection{Remaining Proofs for Section~\ref{sec:wsm-ordered}}\label{sec:wired-boundary-proofs}

Finally, we give the proofs of Lemmas~\ref{lem:wsm-implied-from-wired-boundary} and~\ref{lem:no-long-path-outside-giant=>wired}, which we restate for convenience.

\begin{LemmaTVatmostWB}
\stateTVastmostWB
\end{LemmaTVatmostWB}
\begin{proof}
    Let \(\F_\ord\sim\pi_{G,\ord}\) and \(\F^+\sim\pi_{G,v,\ell}^+\).

    Let \(\cO\) be the event that \(\F_\ord\) contains at least \(n\mord(\beta_c)-n\rho /4\) edges outside of the ball (where \(\rho=\mord(\beta_c)-\mdis(\beta_c)\) as in Definition~\ref{def:RCorddis}). Since \(|E(B_\ell(v))| = O(n^{1/2})\),  for \(n\) large enough we have \(\Pr(\overline{\cO})\leq \Pr(|\F_\ord|\leq n\mord - n\rho/8)\), which is by Corollary~\ref{cor:boundary-of-phases-are-unlikely}, whp over \(G\), bounded by \(\emm^{-\Omega(n)}\). Note that conditional on \(\cO\), regardless of the state of the edges in the ball $B_{\ell}(v)$, the resulting configuration is ordered and thus we can thus drop the conditioning on the ordered phase. 

    By the monotonicity property of the random cluster model (see e.g. \cite[Appendix B]{RCrandom}), we get that \(\pi_{G,v, \ell}^+\) stochastically dominates \(\pi_{G,\ord}(\cdot\mid\cO)=\pi_G(\cdot\mid \cO)\) on \(B_\ell(v)\), hence conditional on \(\cO\) we can perfectly couple \(\F_\ord\sim\pi_{G,\ord}\) and \(\F^+\sim\pi_{G,v, \ell}^+\) such that \(\F_\ord\cap E(B_\ell(v)) \subseteq \F^+ \cap E(B_\ell(v))\). Then we can bound the probability that $\F_\ord$ and $\F^+$  disagree on \(e\) by 
    \begin{align*}
        \Pr(\F_\ord,\F^+\text{ disagree on }e) \leq&\, \Pr(\F_\ord\text{ does not have a wired boundary}) + \Pr(\overline{\cO}) \\ &+   \Pr(e\not\in \F_\ord, e\in \F^+, \F_\ord\text{ has a wired boundary}\mid \cO)
    \end{align*}
    Let \(\cS_B\) be the set of all possible cut-sets separating \(v\) from \(G\setminus B_\ell(v)\). Write \(\cE_{\F_\ord,\cS}\) for the event that \(\cS\) is a wired boundary in \(\F_\ord\) of \(v\) in \(B_\ell(v)\). From monotonicity, conditional on \(\cO\), \(\cE_{\F_\ord,\cS}\) implies that \(\cS\) is also a wired boundary of \(v\) for \(\F^+\). By summing over these events we get that \(\Pr(e\not\in\F_\ord, e\in \F^+,\,\F_\ord\text{ has a wired boundary}\mid\cO) \) is at most
    \[
    \sum_{\cS\in\cS_B} \Pr(e\not\in\F_\ord, e\in\F^+ \mid \cE_{\F_\ord,\cS},\cO)\Pr(\cE_{\F_\ord,\cS}\mid\cO)
    \]
    To finish the proof, we show that for each \(\cS\in\cS_B\), \(\Pr(\F_\ord\cap\cN_v\neq\F^+\cap\cN_v \mid \cE_{\F_\ord,\cS}, \cO) = 0\).

    Conditional on \(\cE_{\F_\ord,\cS},\cO\), we can sample  \(\F_\ord\), \(\F^+\) by first generating the edges outside of the component of \(v\) in \(G\setminus \cS\) -- call these edges \(E_O\) -- and then, conditional on \(E_O\) and \(\cE_{\F_\ord,\cS}\) the remainder of the edges. Note that for \(\cS\in\cS_B\), the events ``\(\cS\) is a wired boundary in \(\F_\ord\)'' (which implies it is a wired boundary also in \(\F^+\)) and \(\cO\) are determined by \(\F_\ord\cap E_O\).

    Thus, conditional on \(\cE_{\F_\ord,\cS}\), \(\cO\), and then any (possible) subconfiguration of \((\F_\ord, \F^+)\) on \(E_O\), they both have a wired boundary \(\cS\). Then by \cite[Observation 51]{RCrandom}, the marginals edges in \(E\setminus E_O\) agree, hence since \(e\not\in E_O\) and by the fact that \((\F_\ord, \F_+)\) were perfectly coupled, the probability of disagreeing on \(e\) conditional on \(\cE_{\F_\ord,\cS}\) and \(\cO\) is zero.
\end{proof}

\begin{LemmaWBfromgiantpaths}
\stateWBfromgiantpaths
\end{LemmaWBfromgiantpaths}
\begin{proof}[Proof of Lemma~\ref{lem:no-long-path-outside-giant=>wired}]
    Consider the set of paths \(\cP\) of length $\ell$ in  \(G\) that start in \(v\). 

    By assumption, each of these paths have a vertex in the largest component of \(G_{v,F}\), denoted \(\cC_1\). For each path in \(\cP\), consider the \emph{first} vertex on the path belonging to \(\cC_1\) (we think of the path as starting in \(v\)). Let \(\cS\) be the set of these vertices. We next show that \(\cS\) is a wired boundary around \(v\) in \(B_\ell(v)\).

    We can further assume \(\cS\) is not empty: if  $\cS$ were empty, then   \(\cP = \emptyset\), which only happens if \(v\) is not connected to \(G\setminus B_\ell(v)\), and thus an empty set is a trivial wired boundary.

    \noindent{\bf Claim 1.} \(\cS\subseteq V(B_\ell(v))\setminus\{v\}\) is a cut-set of \(G\) separating \(v\) from \(V\setminus B_\ell(v)\)

\noindent{\bf Proof of Claim 1.} 
The paths in $\cP$ are contained in the ball $B_\ell(v)$. Also, \(v\) is not in the graph $G_{v,F}$, hence \(v\not\in\cS\).
Consider any path from \(v\) to \(V\setminus V(B_\ell(v))\) in \(G\), \(v=w_0,w_1,\dots,w_r,\dots,w_{\ell'}\not\in B_\ell(v)\). Clearly, \(w_0,\dots,w_\ell\) is a path in \(\cP\) and thus there is \(1\leq i\leq \ell\) such that \(w_i\in\cS\). Therefore any path from \(v\) to the outside of the ball is disconnected in \(G\setminus\cS\),  proving Claim~1.

    Let \(\cC_v\) be the component of \(v\) in \(G\setminus\cS\). To conclude that \(\cS\) is a wired boundary, we need to show vertices of \(\cS\) belong to a single component in \((V\setminus V(\cC_{v}), \{e \in F : e\cap V(\cC_v) = \emptyset\})\). In particular, we show that this component is  \(\cC_1\).

    \noindent{\bf Claim 2.} No vertices of \(\cC_1\) belong to \(\cC_v\).

\noindent{\bf Proof of Claim 2.} 
    Suppose for contradiction \(u\in V(\cC_v)\cap V(\cC_1)\). By  Claim~1, \(\cC_v\) is a subgraph of \(B_\ell(v)\), and thus there exists a path \(v = w_0,w_1,\dots,w_{\ell'} = u\) of length \(\ell'\leq \ell\) from \(v\) to \(u\) in \(\cC_v\). Since we assumed \(\cS\) is not empty and \(\cS\subseteq V(\cC_1)\), wlog we can extend this path to be length \(\ell\), containing such \(u\), say to \(w_0,w_1,\dots,w_{\ell'}=u,w_{\ell'+1},\dots,w_\ell\). But this is a path in \(\cP\). By the choice of \(\cS\) and \(u\), there is \(1\leq i\leq \ell'\) such that \(w_i\in \cS\). But then \(w_i\not\in \cC_v\), which contradicts that \(w_0,\dots,w_{\ell'}\) is in \(\cC_v\).
Thus no such \(u\) can exist and Claim~2 follows.

Claim~2 implies that \(\cC_1\)  is connected in \((V\setminus V(\cC_{v}), \{e \in F : e\cap V(\cC_v) = \emptyset\})\), thus in particular all vertices of \(\cS\) belong to it in this graph as well, so these vertices are all in the same component.
    
     Thus it follows that \(\cS\) is a wired boundary around \(v\) in \(B_\ell(v)\).
\end{proof}

\section{Proof of Theorems~\ref{thm:RCdis} and~\ref{thm:RCord}}\label{sec:proofmain}
In this section, we prove Theorems~\ref{thm:RCdis} and~\ref{thm:RCord}. The proofs of these follow closely those  in \cite{RCrandom} (which are in turn similar to previous arguments in \cite{SinclairsGheissari2022,BG}), so we outline the key ideas and only included detail on the probability amplification (which is not included therein).
\begin{thmRCdis}
\statethmRCdis
\end{thmRCdis}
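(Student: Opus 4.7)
The plan is to combine the WSM statement of Theorem~\ref{thm:wsm-disordered} with the tree mixing-time estimates from \cite{treeRC} to establish an $O(n\log n)$ mixing bound to the conditional distribution $\pi_{G,\dis}$, following the blueprint of \cite{SinclairsGheissari2022,RCrandom}. The all-out initialisation plays a crucial role: by monotonicity of the RC model, if we couple $(X_t)$ with $(X'_t)$ starting from a true stationary sample, then starting from $X_0=\emptyset$ the chain stays stochastically below $X'_t$. Combined with Corollary~\ref{cor:smallgraphb}, which ensures $\pi_G(\Omega_{G,\dis})=1-\emm^{-\Omega(n)}$, this will allow us to effectively restrict attention to the disordered phase throughout the run, avoiding the metastability issues caused by the ordered phase.

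Next I would recall  the standard block-dynamics / local-to-global scheme: with $\ell=\lfloor(\tfrac12-\delta)\log_{d-1}n\rfloor$ as in Theorem~\ref{thm:wsm-disordered}, whp the ball $B_\ell(v)$ around every vertex $v$ is $L$-treelike for some $L=L(d,\delta)$.  For each such ball, the RC dynamics with free (all-out) boundary condition  mixes in $O(|B_\ell(v)|\log |B_\ell(v)|)=O(n^{1/2-\delta}\log n)$ steps by the tree mixing estimates of \cite{treeRC} (applied to the $O(1)$-perturbation of a $d$-regular tree given by the excess edges).  The WSM statement says that the true marginals under $\pi_{G,\dis}$ at a central edge $e$ differ from those of the free-boundary local distribution by at most $\frac{1}{100m}$. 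A standard path-coupling/censoring argument (see \cite[Sec.~4]{SinclairsGheissari2022} and \cite[Sec.~5]{RCrandom}) then converts local fast mixing plus edge-marginal WSM into a global mixing bound of $T_0=O(n\log n)$ steps to within TV-distance $1/4$ of $\pi_{G,\dis}$, using that from the all-out start we never leave $\Omega_{G,\dis}$.

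Finally, for the amplification to general $\eps\geq \emm^{-\Omega(n)}$, I would invoke the submultiplicativity of TV-distance under Markov chain iteration: running a further $k=\lceil \log_2(1/\eps)\rceil$ blocks of length $T_0$ starting from the end of the first block reduces the distance from $\pi_{G,\dis}$ to at most $2^{-k}\leq \eps$, provided we also account for the probability (which is $\emm^{-\Omega(n)}$) that the coupled monotone dominator leaves the disordered phase at some step during the whole run of $T=(k+1)T_0=O(n\log n\log\tfrac{1}{\eps})$ steps. Since $\eps\geq \emm^{-\Omega(n)}$, this exceptional probability is absorbed into $\eps$.

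The main obstacle is a technical one rather than a conceptual one: carefully importing the censoring-style argument of \cite{SinclairsGheissari2022,RCrandom} so that it applies uniformly to \emph{all} starting configurations below a fixed level (here, all-out), and verifying that the tree mixing results of \cite{treeRC} give the required $O(\text{poly}(\log n))$ local mixing with the appropriate boundary condition on the $L$-treelike ball. Once those ingredients are in place, the argument is a direct adaptation of the existing framework, which is why the authors only sketch it in the appendix.
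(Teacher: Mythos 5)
Your proposal follows the same blueprint as the paper's Appendix~\ref{sec:proofmain} proof: couple the all-out chain with a monotone dominator that stays in the disordered phase (exiting with probability $\emm^{-\Omega(n)}$ by Corollary~\ref{cor:boundary-of-phases-are-unlikely}), introduce a third chain censored to $B_\ell(v)$, bound its distance to $\pi^-_{G,v,\ell}$ via the tree log-Sobolev estimates of \cite{treeRC} on the $L$-treelike ball, and close the gap to $\pi_{G,\dis}$ by WSM (Theorem~\ref{thm:wsm-disordered}). One caveat on the amplification step: invoking plain ``submultiplicativity of TV-distance under Markov chain iteration'' is not quite justified here, because the worst-case mixing time of the RC dynamics is $\emm^{\Omega(n)}$ in the metastability window, and the chain's genuine stationary distribution is $\pi_G$ rather than $\pi_{G,\dis}$; the standard $\bar d(s+t)\leq\bar d(s)\bar d(t)$ bound therefore gives nothing useful. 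The paper instead amplifies by restarting a fresh all-out chain at the beginning of each length-$T_{1/4}$ block and using monotonicity to sandwich it between the running all-out chain and the restricted stationary chain, yielding a \emph{conditional} (on no restricted updates) product bound on the non-coalescence probability. You gesture at this with ``the coupled monotone dominator,'' but the precise mechanism — restarting from all-out each block so that coalescence of the fresh chain forces coalescence of the original, conditional on $\cE_{<T'}$ — is the content that replaces ordinary submultiplicativity. With that clarification your outline matches the paper's argument.
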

\begin{proof}
First, we have that, whp \(G\sim\cG_{n,d}\), satisfies:
\begin{enumerate}
\item Corollary~\ref{cor:boundary-of-phases-are-unlikely}  so that, for $\F_\dis \sim \pi_{G,\dis}$, 
$\Pr(|\F_\dis| \geq 
n m_d(\beta_c) + \delta n) = \emm^{-\Omega(n)}$, 
\item Theorem~\ref{thm:wsm-disordered}, i.e., there exists $\delta>0$ so that \(G\) has WSM within the disordered phase at a distance \(\ell\leq(\tfrac{1}{2}-\delta)\log_{d-1}n\).
\end{enumerate}
Also, for $\ell$ as above, there exists a constant $L>0$ such that for every vertex $v$, $B_{\ell}(v)$ is $L$-treelike, see, e.g., \cite[Fact 2.3.]{BG}. Consider any graph $G$ that satisfies these properties.

We next outline the theorem  for \(\varepsilon = \tfrac{1}{4}\), the full details for this part can be found in \cite{RCrandom}. Then,  we prove that probability amplification applies for any \(\varepsilon=\emm^{-\Omega(n)}\).  Consider two copies of the random-cluster dynamics: (1) \((X_t)_{t\geq 0}\) starting from all-out, and (2) \((\hat X_t)_{t\geq 0}\) starting from \(\pi_{G,\dis}\) which is restricted to the disordered phase by ignoring updates that would make the configuration not disordered. Note that the stationary distribution of \(\hat X_t\) is \(\pi_{G,\dis}\), and thus for all \(t\geq 0\), \(\hat X_t\sim\pi_{G,\dis}\). 
    
    We couple the two chains by choosing the same edge in each step, and same \(U[0,1]\) random variable to decide whether to keep the edge in the next configuration. Note that if \(\hat X_t\) has not ignored any updates by the time \(t\), by monotonicity \(\hat X_t \supseteq X_t\).     Since \(\hat X_t\sim\pi_{G,\dis}\), we can bound \(\TV{X_T-\pi_{G,\dis}}\leq\Pr(X_T\neq\hat X_T)\), which will next show is at most \(\tfrac{1}{4}\) for \(T = Cn\log n\) for some constant \(C > 0\).
    Let \(\cE_{<t}\) be the event that no transitions have been ignored by time \(t\). Since we assumed \(G\) is such that Corollary~\ref{cor:boundary-of-phases-are-unlikely} applies, the probability that \(\hat X_t\) is in a state from which it is possible to leave the disordered phase is \(\emm^{-\Omega(n)}\), thus by the union bound \(\Pr(\overline{\cE_{<T}}) = T\emm^{-\Omega(n)}\).   
    
    Next, we upper bound $\Pr(X_T\neq \hat X_T)$ by summing over disagreements on individual edges. We have in particular that
    \begin{align}
    \Pr(X_T\neq \hat X_T)\leq&  m\Pr(\overline{\cE_{<T}}) + \sum_{e\in E(G)} \Pr(\mathds{1}\{e\in X_T\}\neq \mathds{1}\{e\in \hat X_T\} \mid \cE_{<T}). \label{eq:tv-bound-by-ub}
    \end{align}
    Fix an arbitrary edge \(e\) and let \(v\) be a vertex incident to \(e\). We will bound \(\Pr(\mathds{1}\{e\in X_T\}\neq \mathds{1}\{e\in \hat X_T\} \mid \cE_{<T})\) by considering another random-cluster dynamics \((X_t^v)_{t\geq 0}\), which starts from all-out and for which where we restrict all updates to be inside the ball \(B_\ell(v)\). Formally,  \(X_t^v\) is coupled with \(X_t\) and \(\hat X_t\) using the same coupling as above, ignoring updates on edges outside the ball \(B_\ell(v)\). By monotonicity, conditional on \(\cE_{<T}\), \(X^v_T \subseteq X_T \subseteq \hat X_T\), thus also \(\Pr(\mathds{1}\{e\in X_T\}\neq \mathds{1}\{e\in \hat X_T\} \mid \cE_{<T})\leq \Pr(\mathds{1}\{e\in X^v_T\}\neq \mathds{1}\{e\in \hat X_T\} \mid \cE_{<T})\leq \Pr(e\in X^v_T\mid \cE_{<T}) - \Pr(e\in\hat X_T \mid \cE_{<T})\), where the last inequality follows by monotonicity. Using the triangle inequality on  the last bound, and the inequality  \(|\Pr(A)-\Pr(A\mid B)|\leq 2\Pr(\overline B)\) to remove the conditioning, we obtain that
    \[
   \Pr(\mathds{1}\{e\in X_T\}\neq \mathds{1}\{e\in \hat X_T\} \mid \cE_{<T}) \leq 4 \Pr(\overline{\cE_{<T}}) + |\Pr(e\in X_T^v) - \pi_{G,v, \ell}^-(1_e)| + |\pi_{G,v, \ell}^-(1_e) - \Pr(e\in \hat X_T)|,
    \]
where $\pi^-_{G,v,\ell}$ is the 
distribution on $B_\ell(v)$ conditioned on an all-out boundary. 
    Since \(\Pr(e\in\hat X_T) = \pi_{G,\dis}(1_e)\), WSM within the disordered phase bounds the third term by \(\frac{1}{100m}\).
     The second term is also bounded by $\frac{1}{100m}$  using mixing time estimates, in this case log-Sobolev inequalities. We don't give the full details here (which, as mentioned before, can also be found in \cite{RCrandom, SinclairsGheissari2022,BG}) but the idea is that, since the neighbourhood of $v$ is treelike (up to removing a constant number of edges), the log-Sobolev constant for $\pi_{G,v, \ell}^-(1_e)$ is a constant factor away from that on the regular tree. This in turn is just percolation so the log-Sobolev constant for the chain \(X_v^t\) is at most \(C_\dis / |V(B_\ell(v))|\) (in the ordered regime, the bound for the log-Sobolev constant on the tree with + boundary condition is given in \cite{treeRC}). Since \(\pi_{G,v, \ell}^-\) is the stationary distribution of the chain \(X_v^t\), the term $|\Pr(e\in X_T^v) - \pi_{G,v, \ell}^-(1_e)|$ decays as $n\emm^{-C_\dis T_v / 3}$  where $T_v$ is the number of updates inside the ball $B_{\ell}(v)$ by time $t$, which ultimately gives the claimed bound $\frac{1}{100m}$.

     Combining the above and going back to \eqref{eq:tv-bound-by-ub}, we obtain
\[
    \Pr(X_T\neq \hat X_T) \leq 5Cmn\log n \emm^{-\Omega(n)} + \frac{m}{50m} \leq 1/4
    \]
for all \(n\) large enough. This concludes the proof for \(\varepsilon = 1/4\). 

In the remainder of the proof, we denote the \(T\) established above for $\eps=1/4$ by \(T_{1/4}\).   
    To get the claimed TV-distance bound for all \(\varepsilon\geq \emm^{-\Omega(n)}\), we modify the standard probability amplification argument using monotonicity. Formally, we prove the following inequality for all integer \(k\geq 0\), whenever \(T' \geq kT_{1/4}\) (the second inequality follows from induction and the definition of \(T_{1/4}\)).    
    \begin{align}
        \Pr(X_{kT_{1/4}} \neq \hat X_{kT_{1/4}}\mid\cE_{<T'}) &\leq \frac{\Pr(X_{(k-1)T_{1/4}}\neq\hat X_{(k-1)T_{1/4}}\mid\cE_{<T'})\Pr(X_{T_{1/4}}\neq\hat X_{T_{1/4}})}{\Pr(\cE_{<T'})} \label{eq:tv-decay-induction-step} \\
        &\leq [4\Pr(\cE_{<T'})]^{-k}\label{eq:conditional-tv-decay}
    \end{align}

    We introduce another copy of coupled Glauber dynamics, \((X'_t)_{t\geq T}\) starting from all-out at the time \(T_{(k-1)T_{1/4}}\), which is to say that that \(t\)-th step of \(X'_t\) is coupled with the \(((k-1)T_{1/4}+t)\)th step of \(X_t\) and \(\hat X_t\) (using the standard monotone coupling). Now, we use two facts: first \(\hat X_{(k-1)T_{1/4}}\sim \pi_{G,\dis}\), \(X'_t\sim X_t\), and thus the probability of \(\hat X_{(k-1)T_{1/4} + t}\neq X'_t\) is the same as the probability of \(X_t\neq \hat X_t\). Second, that conditional on \(\cE_{<T'}\) (for any \(T' \geq t + (k-1)T_{1/4}\)), \(X'_t\subseteq X_{(k-1)T_{1/4}} \subseteq \hat X_{(k-1)T_{1/4}}\) and hence \(X_{(k-1)T_{1/4}}\neq\hat X_{(k-1)T_{1/4}}\implies X'_T\neq \hat X_{(k-1)T_{1/4}}\).

    Thus, conditionally on \(\cE_{<T'}\), it must be the case that both \(X_t\), \(\hat X_t\) has not coupled by time \((k-1)T_{1/4}\) (if they have, by the conditioning they would not disagree before time \(T'\)), and also that \(\hat X_{(k-1)T_{1/4} + t}\), \(X'_t\) has not coupled by time \(t = T_{1/4}\), hence l.h.s. of~\eqref{eq:tv-decay-induction-step} is at most
    \[
    \Pr(X'_{T_{1/4}}\neq\hat X_{kT_{1/4}}\mid \cE_{<T'})\Pr(X_{(k-1)T_{1/4}}\neq\hat X_{(k-1)T_{1/4}}\mid\cE_{<T'}),
    \]
    We conclude~\eqref{eq:tv-decay-induction-step} by noting \(\Pr(X'_{T_{1/4}}\neq\hat X_{kT_{1/4}} \mid \cE_{<T'}) \leq \frac{\Pr(X'_{T_{1/4}}\neq\hat X_{kT_{1/4}})}{\Pr(\cE_{<T'})} = \frac{\Pr(X_{T_{1/4}}\neq\hat X_{T_{1/4}})}{\Pr(\cE_{<T'})}\).

    To finish, recall that \(\Pr(\overline{\cE_{<T'}})\leq T' \emm^{-\Omega(n)}\) and \(T_{1/4} = O(n\log n)\), thus there exists \(\varepsilon_0 = \emm^{-\Omega(n)}\) such that \(T_{1/4}\lceil\log_2(\varepsilon_0/2)\rceil\leq\tfrac{\varepsilon_0}{2}\leq\tfrac 12\). Then, by~\eqref{eq:conditional-tv-decay}, for all \(\varepsilon\in(0,\varepsilon_0)\), 
    \begin{align*}
    \Pr(X_{T_{1/4}\lceil\log_2(2/\varepsilon)\rceil}\neq\hat X_{ T_{1/4}\lceil\log_2(2/\varepsilon)\rceil} )&\leq \Pr(\overline{\cE_{<T'}}) +  \Pr(X_{T_{1/4}\lceil\log_2(2/\varepsilon)\rceil}\neq\hat X_{ T_{1/4}\lceil\log_2(2/\varepsilon)\rceil} \mid \cE_{<T'}) \\&\leq \frac{\varepsilon_0}{2} + 2^{-\lceil\log_2(2/\varepsilon)\rceil}\leq \varepsilon
    \end{align*}
    Thus, for any \(\varepsilon \geq \emm^{-\Omega(n)}\), \(\TV{X_T-\pi_{G,\dis}}\leq\varepsilon\) for \(T = T_{1/4}\lceil\log_2(1/\varepsilon)\rceil = O(n\log n\log(1/\varepsilon))\).
\end{proof}

\begin{thmRCord}
\statethmRCord
\end{thmRCord}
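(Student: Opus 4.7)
The plan is to mirror the proof of Theorem~\ref{thm:RCdis} with every monotonicity argument reversed, swapping in the ordered-phase ingredients. First I would fix a graph $G\sim\cG_{n,d}$ satisfying the three properties needed whp: (i) the ordered tail bound of Corollary~\ref{cor:boundary-of-phases-are-unlikely}, which prevents a trajectory of $\pi_{G,\ord}$ from leaving $\Omega_{G,\ord}$ in polynomially many steps; (ii) WSM within the ordered phase at distance $\ell=\lfloor(\tfrac12-\delta)\log_{d-1}n\rfloor$ by Theorem~\ref{thm:wsm-ordered}; and (iii) $L$-treelike radius-$\ell$ balls at every vertex for a constant~$L$. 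As in the disordered case, I would first prove the TV bound at $\eps=1/4$ and then amplify.

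For the constant-$\eps$ step, I would couple the chain $(X_t)$ initialised at all-in with a reference chain $(\hat X_t)$ started from $\hat X_0\sim\pi_{G,\ord}$ in which any proposed update exiting $\Omega_{G,\ord}$ is ignored, so $\hat X_t\sim\pi_{G,\ord}$ for all $t$. Under the standard monotone coupling, starting from the maximal configuration now places $X_t\supseteq \hat X_t$ on the event $\cE_{<t}$ that no ignored updates have occurred; the complement has probability $T\emm^{-\Omega(n)}$ at time $T$ by Corollary~\ref{cor:boundary-of-phases-are-unlikely} and a union bound. Decomposing $\TV{X_T-\pi_{G,\ord}}\leq \Pr(X_T\neq \hat X_T)$ over edges, for each edge $e$ incident to a vertex $v$ I would introduce an auxiliary chain $(X_t^v)$ started at all-in and restricted to updating only edges of $B_\ell(v)$; reversed monotonicity gives $X_T^v\supseteq X_T\supseteq \hat X_T$ conditional on $\cE_{<T}$. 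The per-edge disagreement probability is then controlled by $\bigl|\Pr(e\in X_T^v)-\pi^+_{G,v,\ell}(1_e)\bigr|+\bigl|\pi^+_{G,v,\ell}(1_e)-\pi_{G,\ord}(1_e)\bigr|$ plus an $O(\Pr(\overline{\cE_{<T}}))$ penalty for removing the conditioning. The second summand is at most $\tfrac{1}{100m}$ by Theorem~\ref{thm:wsm-ordered}. For the first, the stationary distribution of $X_t^v$ is precisely $\pi^+_{G,v,\ell}$, because the edges outside $B_\ell(v)$ remain all-in throughout, so the required mixing bound is a log-Sobolev inequality for the RC dynamics on a $d$-regular tree with wired boundary, as supplied by \cite{treeRC}, transferred to $B_\ell(v)$ at the cost of a constant factor absorbing the $L$ excess edges. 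Taking $T_{1/4}=O(n\log n)$ then makes the local error at most $\tfrac{1}{100m}$ as well, and summing over edges yields $\Pr(X_{T_{1/4}}\neq \hat X_{T_{1/4}})\leq 1/4$.

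Amplification to arbitrary $\eps\geq\emm^{-\Omega(n)}$ goes through verbatim as in the proof of Theorem~\ref{thm:RCdis}: launch a fresh all-in coupled copy of the chain at time $(k-1)T_{1/4}$ and use reversed monotonicity (again starting from the top rather than the bottom) to obtain the inductive bound $\Pr(X_{kT_{1/4}}\neq \hat X_{kT_{1/4}}\mid\cE_{<T'})\leq [4\Pr(\cE_{<T'})]^{-k}$, then set $k=\lceil\log_2(2/\eps)\rceil$. The main obstacle I anticipate is precisely the log-Sobolev step: in the disordered case the tree-percolation Sobolev constant is essentially immediate, whereas here one genuinely needs the wired-boundary estimate of \cite{treeRC} and must track how the $L$ treelike excess edges perturb it. Every other part of the argument is a mechanical translation of the disordered proof, with monotonicity flipped throughout.
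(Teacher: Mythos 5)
Your proposal is correct and follows essentially the same approach as the paper; the paper's own proof of Theorem~\ref{thm:RCord} is simply the one-line remark that the argument is analogous to the disordered case with the monotone inclusions reversed, and you have accurately spelled out each reversal — the all-in start, $X_T^v \supseteq X_T \supseteq \hat X_T$, the wired-boundary local distribution $\pi^+_{G,v,\ell}$, and the log-Sobolev estimate of \cite{treeRC} for the $+$ boundary condition on the tree. The amplification step also carries over exactly as you describe.
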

\begin{proof}
    The proof for ordered phase is completely analogous to the proof for the disordered phase, except we compare to the chain starting from all-out, and the inequalities go the other way.
\end{proof}

\end{appendix}

\end{document}